 \DeclareMathAlphabet{\mathpzc}{OT1}{pzc}{m}{it}
 \newtheorem{theorem}{Theorem}[section]
 \newtheorem{lemma}[theorem]{Lemma}
 \newtheorem{proposition}[theorem]{Proposition}
 \newtheorem{corollary}[theorem]{Corollary}
 \newtheorem{definition}[theorem]{Definition}
  \theoremstyle{definition}
 \newtheorem{example}[theorem]{Example}
 \newtheorem{remark}[theorem]{Remark}
\renewenvironment{proof}{\noindent{\it
Proof.}}{\bgroup\hspace{\stretch{1}}$\square$\egroup\medskip\par}
\newcommand{\ad}{\mathrm{ad}}
\newcommand{\id}{\mathrm{id}}
\newcommand{\End}{\mathrm{End}}
\newcommand{\Aut}{\mathsf{Aut}}
\newcommand{\g}{\frak{g}}
\newcommand{\U}{\mathsf{U}({\frak{g}})}
\newcommand{\slt}{\mathfrak{sl}_2(\mathbb{C})}
\newcommand{\h}{\mathfrak{h}}
\begin{document}

\vspace{15cm}
 \title{Introduction to representations of braid groups}
\author{Camilo Arias Abad\footnote{Max Planck Institut f\"ur Mathematik, Vivatgasse 7, Bonn,
camiloariasabad@gmail.com. Supported
by SNF Grant 200020-131813/1 and a Humboldt fellowship.} \hspace{0cm}}

 \maketitle
 \begin{abstract} 
These are lecture notes prepared for a minicourse given at the Cimpa Research School \textit{Algebraic and geometric
aspects of representation theory}, held in Curitiba, Brazil in March 2013. The purpose of the course is to 
provide an introduction to the study of representations of braid groups. Three general classes of representations of braid groups are considered: homological representations via mapping class groups, monodromy representations via the Knizhnik-Zamolodchikov connection, and solutions of the Yang-Baxter equation via braided bialgebras. Some of the remarkable relations between these three different constructions are described.
\end{abstract}

\tableofcontents
\acknowledgements{I would like to thank Florian Sch\"atz and David Martinez Torres  for their comments on an earlier version, and Bernardo Uribe, for his help with some of the references. I would also like to thank the organizers of the Cimpa School for the invitation to teach this course, which gave me the opportunity to learn about braid groups. 
I am very grateful to the anonymous referee for carefully reading a previous version and correcting several mistakes.}
\section{Introduction}

Braid groups  were introduced by Emil Artin  in 1925, and by now play a role in various parts
of mathematics including knot theory, low dimensional topology, public key cryptography and deformation quantization.
The braid group $B_n$ admits different equivalent definitions: the fundamental group of the configuration space $C_n(\mathbb{C})$ of $n$ unordered points in the complex plane, the mapping class group of a disk with $n$ marked points, and Artin's presentation.
Each of these descriptions provides a method for constructing representations of $B_n$: fundamental groups act via monodromy, mapping class groups act on the homology groups of spaces, and solutions to the Yang-Baxter equations
produce solutions to Artin's relations.
The aim of these notes is to introduce these methods, and describe some of the relations between these seemingly unrelated constructions. The notes are organized as follows:

In section $\S \ref{section2}$ we present three different ways in which the braid group $B_n$ can be defined, explain how these definitions are equivalent, and introduce some general facts regarding the braid groups.
Section $\S \ref{section3}$ regards the group $B_n$ as the mapping class group of a disk with $n$ marked points. 
This description produces representations of $B_n$ via the action on the homology of spaces which are functorially associated to the punctured disk. The Burau representation is discussed and the relationship with the Alexander-Conway polynomial is explained. The Lawrence-Krammer-Bigelow representation is also constructed.
Section $\S \ref{section4}$ begins with the introduction of the Knizhnik-Zamolodchikov connection and explains how representations of complex semisimple Lie algebras produce, via the Knizhnik-Zamolodchivok connection, monodromy representations of the braid group. A theorem of Kohno is explained, which describes the relationship between monodromy representations associated to the Lie algebra $\slt$ and the Lawrence-Krammer-Bigelow representation.
In section $\S \ref{section5}$ we introduce the Yang-Baxter equation, and explain the way in which solutions of this equation produce representations of $B_n$. We define the notion of a quasi-triangular bialgebra, and show that modules over quasi-triangular bialgebras come equipped with solutions to the Yang-Baxter equation. Finally, the Drinfeld-Kohno theorem is discussed. This remarkable theorem explains the precise relationship between the monodromy representations constructed via the Knizhnik-Zamolodchikov connection, and the representations constructed as solutions of the Yang-Baxter equation provided by quantum enveloping algebras.
We have included some proofs of the results we discussed, when they are enlightening and sufficiently simple.
However, no attempt has been made to present all proofs of the theorems we mentioned, as this would go far beyond
the scope of these notes. When proofs are missing, we have tried to provide references to the many excellent sources that were used in preparing this course.
\section{Braid groups}\label{section2}

Artin's braid groups admit several equivalent descriptions, which we will review in this section.
\subsection{Artin's presentation}
We denote by $D_2$ the unit disk in the complex plane and fix $n$ marked points \[-1< p_1 < p_2 \dots < p_n<1 \in D_2 \cap \mathbb{R}.\]
\begin{definition}
A braid is a collection of $n$  paths $f_i: I =[0,1] \mapsto D_2 $, called strands, such that:
\begin{enumerate}
\item $f_i(0)=p_i$.
\item $f_i(1)= p_{\tau(i)}$, for some permutation $\tau$ in the symmetric group $\Sigma_n$.
\item For each $t \in I$, $f_i(t)\neq f_j(t)$ provided that $i\neq j$.
\end{enumerate}
\end{definition}

Braids are pictured geometrically as a collection of $n$ strands in three dimensional space. 
One is usually interested in braids only up to isotopies fixing the endpoints of each strand. 
We will abuse the notation and take the word braid to mean an isotopy class.
Some examples of braids are the following:

\begin{figure}[h!]
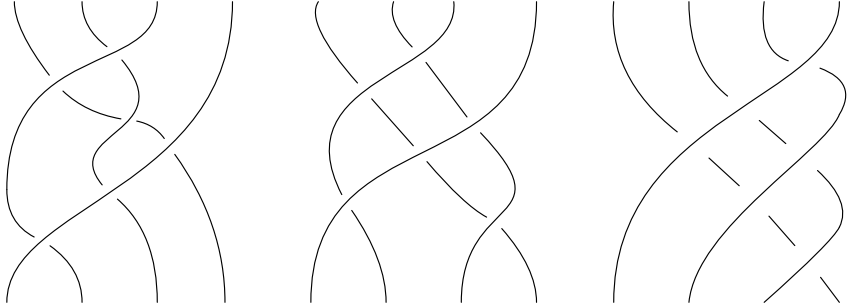

\[ \xy 
(-15,-20)*{}="b1"; (-5,-20)*{}="b2"; 
(5,-20)*{}="b3"; (14,-20)*{}="b4"; 
(-14,20)*{}="T1"; (-5,20)*{}="T2"; 
(5,20)*{}="T3"; (15,20)*{}="T4"; 
"b1"; "T4" **\crv{(-15,-7) & (15,-5)} 
\POS?(.25)*{\hole}="2x" \POS?(.47)*{\hole}="2y" \POS?(.65)*{\hole}="2z"; 
"b2";"2x" **\crv{(-5,-15)}; 
"b3";"2y" **\crv{(5,-10)}; 
"b4";"2z" **\crv{(14,-9)}; 
(-15,-5)*{}="3x"; 
"2x"; "3x" **\crv{(-15,-10)}; 
"3x"; "T3" **\crv{(-15,15) & (5,10)} 
\POS?(.38)*{\hole}="4y" \POS?(.65)*{\hole}="4z"; 
"T1";"4y" **\crv{(-14,16)}; 
"T2";"4z" **\crv{(-5,16)}; 
"2y";"4z" **\crv{(-10,3) & (10,2)} \POS?(.6)*{\hole}="5z"; 
"4y";"5z" **\crv{(-5,5)}; 
"5z";"2z" **\crv{(5,4)};   
\endxy
 \quad \,\,\,\, \quad
\xy 
(-14,20)*{}="T1"; (-4,20)*{}="T2"; 
(4,20)*{}="T3"; (15,20)*{}="T4"; 
(-15,-20)*{}="B1"; (-5,-20)*{}="B2"; 
(5,-20)*{}="B3"; (15,-20)*{}="B4"; 
"B1";"T4" **\crv{(-15,5) & (15,-5)} 
\POS?(.25)*{\hole}="2x" \POS?(.49)*{\hole}="2y" \POS?(.65)*{\hole}="2z"; 
"2x";"T3" **\crv{(-20,10) & (5,10) } 
\POS?(.45)*{\hole}="3y" \POS?(.7)*{\hole}="3z"; 
"2x";"B2" **\crv{(-5,-14)}; 
"T1";"3y" **\crv{(-16,17)}; 
"T2";"3z" **\crv{(-5,17)}; 
"3z";"2z" **\crv{}; 
"3y";"2y" **\crv{}; 
"B3";"2z" **\crv{ (5,-5) &(20,-10)} 
\POS?(.4)*{\hole}="4z"; 
"2y";"4z" **\crv{(6,-8)}; 
"4z";"B4" **\crv{(15,-15)}; 
\endxy
\quad \,\,\,\, \quad
\xy 
(15,20)*{}="T1"; (5,20)*{}="T2"; 
(-5,20)*{}="T3"; (-15,20)*{}="T4"; 
(15,-20)*{}="B1"; (5,-20)*{}="B2"; 
(-5,-20)*{}="B3"; (-15,-20)*{}="B4"; 
"T1"; "B4" **\crv{(15,7) & (-15,5)} 
\POS?(.25)*+{\hole}="2x" \POS?(.45)*+{\hole}="2y" \POS?(.6)*+{\hole}="2z"; 
"T2";"2x" **\crv{(4,12)}; 
"T3";"2y" **\crv{(-5,10)}; 
"T4";"2z" **\crv{(-16,9)}; 
(15,5)*{}="3x"; 
"2x"; "3x" **\crv{(18,10)}; 
"3x"; "B3" **\crv{(13,0) & (-4,-10)} 
\POS?(.3)*+{\hole}="4x" \POS?(.53)*+{\hole}="4y"; 
"2y"; "4x" **\crv{}; 
"2z"; "4y" **\crv{}; 
(15,-10)*{}="5x"; 
"4x";"5x" **\crv{(17,-6)}; 
"5x";"B2" **\crv{(14,-12)} 
\POS?(.6)*+{\hole}="6x"; 
"6x";"B1" **\crv{}; 
"4y";"6x" **\crv{}; 
\endxy
 \]
 \caption{Examples of braids}
\end{figure}

\begin{definition}
The braid group in $n$ strands, denoted $B_n$, consists of the set of all
braids with $n$ strands with multiplication given by glueing.
\end{definition}
 
It is a good exercise to become convinced that this operation indeed gives $B_n$ the structure of a group.
The following result, due to Emil Artin, gives a presentation for the group $B_n$.
\begin{theorem}[Artin \cite{Artin}]
Denote by $\mathcal{B}_n$ the group generated by the symbols $\sigma_1, \dots , \sigma_{n-1}$ modulo the 
relations:
\begin{enumerate}
\item $\sigma_i \sigma_{i+1}\sigma_i=\sigma_{i+1} \sigma_{i}\sigma_{i+1}$ for $i=1,...,n-2$.
\item $\sigma_i \sigma_j=\sigma_j \sigma_i$ for $|i-j|\geq2$.
\end{enumerate}

There is an isomorphism of groups $ \phi: \mathcal{B}_n \rightarrow B_n$ determined by:

\begin{figure}[h!]
\[ \sigma_i
    \quad \mapsto \quad
    \vcenter{\xy 0;/r1pc/:
    @={(0.5,0)}, @@{*{\vtwistneg}},
    @i@={(0,0),(0.5,0),(2.5,0),(3,0)} @@{="save";"save"-(0,1),**@{-}}
    \endxy}\,\,.\]
   
    \end{figure}
\end{theorem}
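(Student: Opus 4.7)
The plan is to define $\phi$ on generators, verify well-definedness geometrically, and then establish bijectivity by a generic-position argument for surjectivity together with the construction of a faithful representation for injectivity. On generators, $\phi(\sigma_i)$ is the elementary braid pictured in the statement, in which only strands $i$ and $i+1$ interact by a single crossing. To show $\phi$ descends to $\mathcal{B}_n$, I would verify the two defining relations hold in the geometric group $B_n$. The commutation $\sigma_i \sigma_j = \sigma_j \sigma_i$ for $|i-j| \geq 2$ is immediate, since the crossings involve disjoint pairs of strands and can be performed in either order. The braid relation $\sigma_i \sigma_{i+1} \sigma_i = \sigma_{i+1} \sigma_i \sigma_{i+1}$ is the classical Reidemeister III move, verified by drawing both sides and producing an explicit ambient isotopy between them.

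For surjectivity, take a braid $\beta \in B_n$ and, after a small perturbation, assume its strands are in generic position relative to the vertical projection, so that all crossings of the projected diagram occur at isolated distinct heights $0 < t_1 < \cdots < t_k < 1$. Between consecutive heights no strands swap, and near each $t_j$ exactly one adjacent pair of strands crosses, contributing a factor $\sigma_{i_j}^{\pm 1}$ depending on the sign of the crossing. Concatenating these factors writes $\beta$ as a word in the generators, so $\phi$ is surjective.

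Injectivity is the main obstacle, and the cleanest approach is Artin's: construct a faithful representation of $B_n$ on the free group $F_n = \pi_1(D_2 \setminus \{p_1, \ldots, p_n\}, *)$. One regards a braid as the trace of an isotopy of $D_2$ that fixes $\partial D_2$ pointwise and permutes the marked points; this produces an element of the mapping class group, hence an automorphism of $F_n$. One checks that each $\sigma_i$ acts by an explicit formula (the Artin representation), giving a homomorphism $\Psi: \mathcal{B}_n \to \Aut(F_n)$ compatible with $\phi$. The hard step is proving $\Psi$ injective: given a word $w$ in the $\sigma_i$ whose image in $\Aut(F_n)$ is trivial, one must reduce $w$ to the empty word using only the two defining relations. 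This is typically accomplished by putting $w$ into a combinatorial normal form and analyzing the induced permutation and free-group action strand by strand. An alternative is to induct on $n$ using the Fadell-Neuwirth fibration $C_n(\mathbb{C}) \to C_{n-1}(\mathbb{C})$, whose fiber is $\mathbb{C}$ minus $n-1$ points, and reassemble a presentation of $B_n$ from that of $B_{n-1}$ via the resulting long exact sequence. In either route, the substantial combinatorial bookkeeping required for injectivity is where the real difficulty lies.
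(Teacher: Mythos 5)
Your proposal follows essentially the same route as the paper: verify the two defining relations geometrically to conclude $\phi$ is a well-defined homomorphism, observe surjectivity because the elementary crossings generate $B_n$ (your generic-position argument supplies the detail the paper leaves implicit), and isolate injectivity as the substantive point. Like the paper, which simply cites Kassel--Turaev, Theorem 1.12, for injectivity, you sketch the standard routes (the Artin action on the free group $F_n$, or induction via the Fadell--Neuwirth fibration) without carrying out the combinatorial reduction, so the two treatments are at the same level of completeness.
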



\begin{proof}
The fact that the first relation is satisfied follows from the equality:
\begin{figure}[h!]

\[\vcenter{\xy 0;/r1pc/:
    @={(0.5,-0.5),(0,0)}, @@{*{\vtwistneg}},(0,-2),\vtwistneg,
    @i@={(0,-1),(2,0),(2,-2)} @@{="save";"save"-(0,1),**@{-}}\endxy}
    \quad = \quad
    \vcenter{\xy 0;/r1pc/:
    @={(0.5,0),(0,-0.5),(0.5,-1)}, @@{*{\vtwistneg}},
    @i@={(0,0),(2,-1),(0,-2)} @@{="save";"save"-(0,1),**@{-}}
    \endxy}\]
    \end{figure}

The fact that the second relation is satisfied follows from:

\begin{figure}[h!]
\[\vcenter{\xy 0;/r1pc/:
    @={(0,0)}, @@{*{\vtwistneg}},(1.5, -1),\vtwistneg,
   @i@={(0,-1),(1,-1),(2.5,0),(1.5,0)}  @@{="save";"save"-(0,1),**@{-}}\endxy}
   \quad=\quad
   \vcenter{\xy 0;/r1pc/:
    @={(0,-0.5)}, @@{*{\vtwistneg}},(1.5, 0),\vtwistneg,
   @i@={(0,0),(1,0),(2.5,-1),(1.5,-1)}  @@{="save";"save"-(0,1),**@{-}}\endxy}\]
    \end{figure}
We conclude that $\phi$ defines a homomorphism.
Next, one needs to prove that $\phi$ is actually an isomorphism.
Surjectivity is easy since the elements $\phi(\sigma_i)$ clearly generate $B_n$. 
For a proof of injectivity see \cite{KT} Theorem 1.12.

\end{proof}

From now on we will identify the groups $B_n$ and $\mathcal{B}_n$ using the isomorphism above.

\subsection{Configuration spaces}

The group $B_n$ can also be described as the fundamental group of the configuration space of points in the
plane. We will now briefly recall some definitions regarding configuration spaces.
Let $M$ be a smooth manifold. The configuration space of $n$ ordered points in $M$, denoted $\hat{C}_n(M)$
is the manifold:

\[\hat{C}_n(M):= \{(x_1, \dots ,x_n)\in M^n : x_i \neq x_j \text{ if } i\neq j\}.\]

There is a natural action of the symmetric group $\Sigma_n$ on the space $\hat{C}_n(M)$, given by permuting the coordinates. The configuration space of $n$ unordered points in $M$ is the quotient space:

\[C_n(M):= \hat{C}_n(M)/\Sigma_n.\]

\begin{theorem}[Fox-Neuwirth \cite{FoxN}, Fadell-van Buskirk \cite{FadelVan}]
There is a natural isomorphism between the braid group in $n$ strands and the fundamental group of the
configuration space of $n$ unordered points in the plane:

\[B_n \cong \pi_1(C_n(\mathbb{C}), \bf{p}).\]
Here ${\bf p}=\{1,2,\dots ,n\}\in C_n(\mathbb{C})$.
\end{theorem}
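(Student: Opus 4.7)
The plan is to exhibit a mutually inverse pair of maps between braids and homotopy classes of loops in $C_n(\mathbb{C})$ via the covering $\hat{C}_n(\mathbb{C}) \to C_n(\mathbb{C})$. The starting observation is that the three conditions on a braid $(f_1,\dots,f_n)$ are exactly the statement that $\gamma(t) = (f_1(t),\dots,f_n(t))$ is a continuous path in the ordered configuration space $\hat{C}_n(\mathbb{C})$ from $(p_1,\dots,p_n)$ to some permutation of it. Projecting to the unordered configuration space gives a loop $\bar\gamma : I \to C_n(\mathbb{C})$ based at $\mathbf{p}$. This defines a candidate map
\[ \Phi : B_n \longrightarrow \pi_1(C_n(\mathbb{C}), \mathbf{p}). \]

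Next I would verify that $\Phi$ is well defined and is a group homomorphism. An isotopy of braids rel endpoints is by definition a continuous family of braids, hence a homotopy of paths in $\hat{C}_n(\mathbb{C})$ with a prescribed endpoint behaviour, which descends to a based homotopy of loops in $C_n(\mathbb{C})$. The group operation on $B_n$ is concatenation of strands (after rescaling time), which matches exactly the concatenation operation on loops used to define $\pi_1$; so $\Phi$ respects multiplication.

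For bijectivity I would use that the symmetric group $\Sigma_n$ acts freely and properly discontinuously on $\hat{C}_n(\mathbb{C})$, so the quotient map $p : \hat{C}_n(\mathbb{C}) \to C_n(\mathbb{C})$ is a regular covering. Given any loop $\delta$ in $C_n(\mathbb{C})$ at $\mathbf{p}$, the unique path-lifting property provides a unique lift $\tilde\delta : I \to \hat{C}_n(\mathbb{C})$ starting at $(p_1,\dots,p_n)$; its $n$ coordinate functions are the strands of a braid, and this assignment respects the operations. This construction gives a two-sided inverse to $\Phi$ once one shows that based homotopies of loops lift to homotopies of lifted paths (again by covering space theory, since the homotopy lifting property applies to the square $I\times I$), and that such a homotopy is precisely a braid isotopy.

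The main obstacle is the careful passage between the two equivalence relations: isotopy of braids rel endpoints on the one hand, and based homotopy of loops in $C_n(\mathbb{C})$ on the other. One direction (isotopy $\Rightarrow$ homotopy) is essentially by definition, but the converse requires lifting a based homotopy $H : I \times I \to C_n(\mathbb{C})$ to a homotopy $\tilde H : I\times I \to \hat{C}_n(\mathbb{C})$ with the correct boundary behaviour, and then verifying that the coordinatewise interpretation of $\tilde H$ furnishes an actual braid isotopy. Both of these are consequences of the covering map property, so no new ingredient beyond basic covering space theory is needed; the remainder of the argument is checking naturality.
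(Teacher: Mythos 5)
Your proposal is correct and follows exactly the route the paper indicates: the paper defines the isomorphism by viewing a braid as a path in $\hat{C}_n(\mathbb{C})$ and composing with the quotient map to obtain a loop in $C_n(\mathbb{C})$, which is precisely your $\Phi$, and the paper leaves the verification of bijectivity to the references. Your completion of that verification via the covering $\hat{C}_n(\mathbb{C}) \to C_n(\mathbb{C})$ (unique path lifting for surjectivity/the inverse, homotopy lifting for injectivity, with the observation that the lifted homotopy has constant endpoints since the fiber over $\mathbf{p}$ is discrete) is the standard and correct argument.
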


The isomorphism above is defined as follows: a braid can be thought of as a path in the configuration space $\hat{C}_n(\mathbb{C})$. However, seen as a path in this configuration space it is not closed, since the endpoint of a strand may be different from its starting point. By composing with the quotient map to the unordered configuration space, this path becomes a closed path, and therefore defines an element of the fundamental group.

\subsection{Mapping class groups}

The braid groups $B_n$ can also be realized as mapping class groups, which exposes the very interesting relation
with the topology of surfaces.
Let $M,N$ be compact manifolds, possibly with boundary. We denote by $C(M,N)$ the space of all continuous functions from $M$ to $N$. There is a natural topology on $C(M,N)$,  the compact open topology, defined as follows: for any $K \subset M$ compact and $U \subset N$ open,  set:
\[V(K,U):= \{ f \in C(M, N): f(K)\subset U\}.\]
The compact open topology is the smallest topology for which all $V(K,U)$ are open. 
We denote by $\mathsf{\mathsf{Homeo}}(M)$ the group of homeomorphisms of $M$. This is a subset of $C(M,M)$ and in this way inherits 
a topology which gives it the structure of a topological group.

Let us now specialize the discussion to the case of an orientable compact surface $S$. The group of orientation preserving homeomorphism of $S$  that fix the boundary pointwise will be denoted by $\mathsf{Homeo}^+(S, \partial S)$. The connected component of the identity in 
$\mathsf{Homeo}^+(S, \partial S)$ is denoted by $\mathsf{Homeo}_0(S,\partial S)$ and consists of the set of homeomorphism that are isotopic to the identity via an isotopy that fixes the boundary pointwise.

\begin{definition}
The mapping class group of a surface $S$ denoted $\mathsf{Mod}(S)$ is defined as:
\[\mathsf{Mod}(S)= \mathsf{Homeo}^+(S,\partial S)/ \mathsf{Homeo}_0(S, \partial S).\]
\end{definition}

\begin{remark}
The mapping class group can alternatively be defined by replacing homeomorphisms by diffeomorphism and 
isotopies by homotopies. The following is a list of possible definitions of $\mathsf{Mod}(S)$:

\begin{eqnarray*}
\mathsf{Mod}(S)&\cong & \mathsf{Homeo}^+(S,\partial S)/ \mathsf{Homeo}_0(S, \partial S)\\
& \cong &  \mathsf{Diff}^+(S,\partial S)/ \mathsf{Diff}_0(S, \partial S)\\
& \cong & \pi_0(\mathsf{Homeo}^+(S,\partial S))\\
& \cong & \pi_0(\mathsf{Diff}^+(S,\partial S)).\\
\end{eqnarray*}
\end{remark}

\begin{example}
The notation $\mathsf{Mod}(S)$ for the mapping class group is motivated by the example of the torus $\mathbb{T}$, where:
\[\mathsf{Mod}(\mathbb{T})\cong SL_2(\mathbb{Z}).\]
The isomorphism is defined by the action of $\mathsf{Mod}(\mathbb{T})$ on homology.
\end{example}

\begin{definition}
Let $S$ be a surface and $Q\subset S$ a finite set of marked points. We denote by $\mathsf{Homeo(S,Q)}$ the group of homeomorphisms of $S$ that fix $Q$ as a set and fix the boundary pointwise.
The mapping class group of $S$, seen as a surface
with marked points is
\[\mathsf{Mod}(S,Q):=\pi_0( \mathsf{Homeo}^+(S, \partial S)\cap \mathsf{Homeo}(S,Q)).\]

\end{definition}

\begin{example}[Alexander-Tietze Theorem]\label{AT}
The mapping class group of the disk is trivial:
\[\mathsf{Mod}(D)=\{0\}.\]
In order to prove this fact we consider an automorphism $f: D \rightarrow D$ that fixes the boundary.
Then we define the isotopy:
\[  
h_t(z)=\begin{cases}
z &\textit{ if }t\leq |z|\leq 1, \\
t h(\frac{z}{t})& \textit{ if } |z| \leq t. 
\end{cases}
\]

\end{example}

\begin{example}

An example of a nontrivial mapping class is that of the half twist. Here $S$ is the disk with two marked points, that we may take to be $p=(-\frac{1}{2},0)$ and $q=(\frac{1}{2},0)$. The half twist $H$ is the class of the diffeormorphism
described by the following picture:

    \begin{figure}[h!]
\center
\includegraphics[width=100mm]{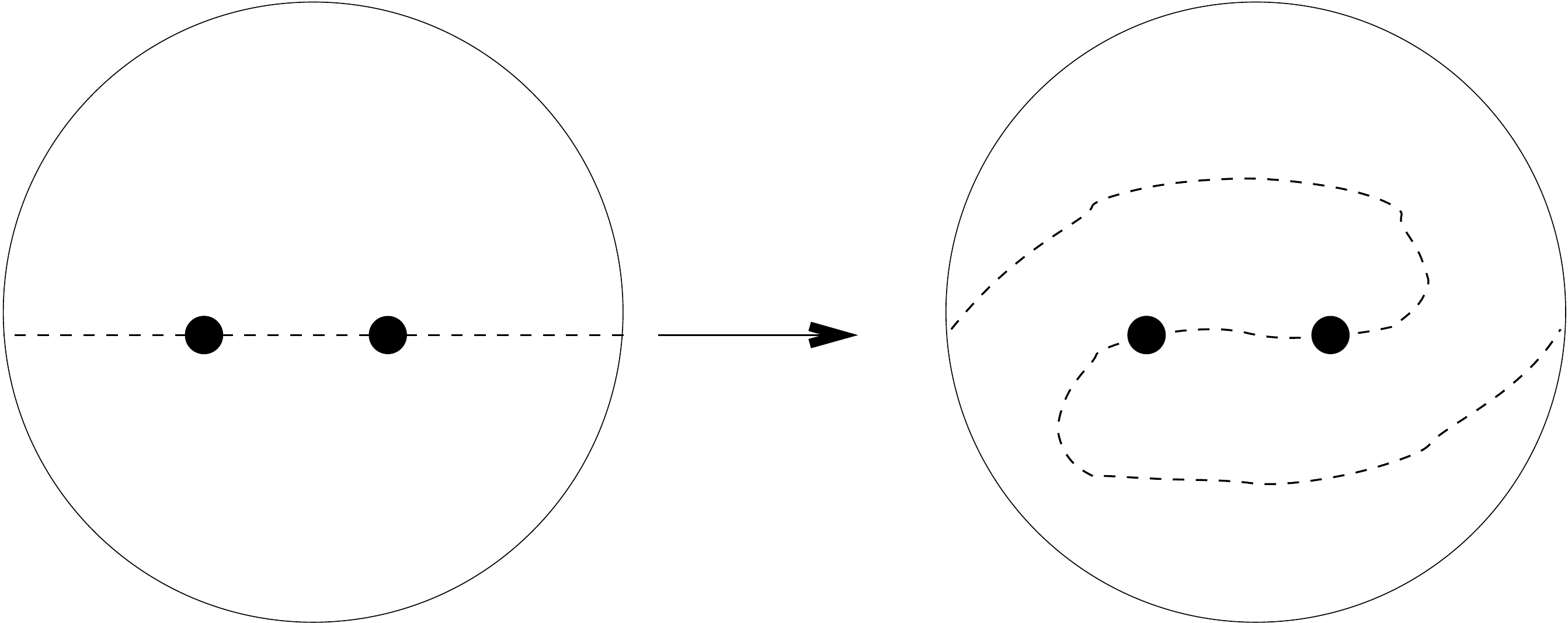}]
\caption{Halftwist of a two pointed disk}
\end{figure}
    
\end{example}

\begin{theorem}
Let $D$ be a disk with $n$ marked points $Q=\{p_1, \dots, p_n\}$. There is an isomorphism:

\[\psi: B_n \rightarrow \mathsf{Mod}(D,Q),\]
characterized by the property that:
\[\sigma_i \mapsto H_i.\]

Here $H_i$ denotes the class of the homeomorphism which is supported on a disk containing only the $i$-th and
$i+1$-th punctures, and which acts as a half twist on that disk.
\end{theorem}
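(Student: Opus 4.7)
The plan is to derive the isomorphism from a fibration whose total space is the full orientation-preserving homeomorphism group of the disk, exploiting the Alexander--Tietze result from Example \ref{AT} together with the Fox--Neuwirth identification $B_n \cong \pi_1(C_n(\mathbb{C}), \mathbf{p})$. Since $\mathrm{int}(D)\cong \mathbb{C}$, we may freely replace $C_n(\mathbb{C})$ by $C_n(\mathrm{int}(D))$ and place the marked points at $Q$.

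First I would consider the evaluation map
\[
\mathrm{ev}_Q : \mathsf{Homeo}^+(D,\partial D) \longrightarrow C_n(\mathrm{int}(D)), \qquad f \longmapsto \{f(p_1),\dots, f(p_n)\},
\]
and check that it is a Serre fibration. This is the key technical input and relies on the parametrized isotopy extension theorem: any path of configurations $\gamma(t)\in C_n(\mathrm{int}(D))$ starting from $\mathrm{ev}_Q(f)$ can be realized by a path of homeomorphisms of $D$ fixing $\partial D$ starting at $f$, with continuous dependence on parameters. The fiber over $Q$ is precisely $\mathsf{Homeo}^+(D,\partial D)\cap \mathsf{Homeo}(D,Q)$, whose group of connected components is by definition $\mathsf{Mod}(D,Q)$.

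Next I would invoke Alexander's trick, which is essentially the content of Example \ref{AT}: the formula $h_t$ defined there extends to show that $\mathsf{Homeo}^+(D,\partial D)$ is not merely path connected but contractible. In particular $\pi_0$ and $\pi_1$ of the total space both vanish, so the long exact sequence in homotopy for the fibration $\mathrm{ev}_Q$ collapses to an isomorphism
\[
\partial : \pi_1\bigl(C_n(\mathrm{int}(D)),Q\bigr) \;\xrightarrow{\;\cong\;}\; \pi_0\bigl(\mathsf{Homeo}^+(D,\partial D)\cap \mathsf{Homeo}(D,Q)\bigr) = \mathsf{Mod}(D,Q).
\]
Composing with the Fox--Neuwirth isomorphism yields the desired map $\psi : B_n \to \mathsf{Mod}(D,Q)$.

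To verify that $\psi(\sigma_i)=H_i$, I would trace the connecting homomorphism on the specific loop representing $\sigma_i$. That loop is the path in $C_n(\mathrm{int}(D))$ which swaps $p_i$ and $p_{i+1}$ by rotating them by angle $\pi$ in a small disk $D_i$ containing no other marked point, while leaving the remaining points fixed. A lift to $\mathsf{Homeo}^+(D,\partial D)$ is supplied by an ambient isotopy supported in a slightly larger disk around $D_i$ that performs the same rotation rigidly on $D_i$ and damps smoothly to the identity on the boundary; its time-one map is exactly the half-twist $H_i$. The main obstacle I expect is the careful setup of the isotopy extension/fibration argument and the bookkeeping required to identify the lift of the standard $\sigma_i$-loop with the specific diffeomorphism representing $H_i$; injectivity of $\psi$ is then automatic from the long exact sequence, and surjectivity is immediate once we know the $H_i$ generate $\mathsf{Mod}(D,Q)$ (which can be argued either a posteriori from surjectivity of the connecting map, or independently via a cell decomposition argument).
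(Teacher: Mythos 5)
Your argument is correct, and it is the standard proof of this theorem; the paper itself does not supply one, deferring instead to Birman--Brendle, so you are filling a genuine gap rather than duplicating the text. The fibration $\mathrm{ev}_Q:\mathsf{Homeo}^+(D,\partial D)\to C_n(\mathrm{int}(D))$ with contractible total space (Alexander's trick, exactly the homotopy $h_t$ of Example \ref{AT}, which is continuous in $h$ and hence contracts the whole group) is precisely the mechanism used in the references, and the long exact sequence does collapse to the bijection $\partial:\pi_1(C_n(\mathrm{int}(D)),Q)\to\pi_0(\mathsf{Homeo}^+(D,\partial D)\cap\mathsf{Homeo}(D,Q))$, giving injectivity and surjectivity simultaneously. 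Two points deserve the care you already flag: first, the Serre fibration property of $\mathrm{ev}_Q$ is not formal and rests on the parametrized isotopy extension theorem (for homeomorphisms one can alternatively argue directly with the change-of-coordinates principle and local cone constructions); second, a priori $\pi_0$ of the fiber is only a pointed set in the long exact sequence, so you must use that the fiber is a topological group and that the total space acts transitively on the base to see that $\partial$ is a group isomorphism --- and depending on whether loops are composed left-to-right or right-to-left it may be an anti-homomorphism, which is harmless here but is exactly the kind of convention mismatch that silently flips $\sigma_i\mapsto H_i$ to $\sigma_i\mapsto H_i^{-1}$. Your identification of $\partial(\sigma_i)$ with the half twist $H_i$ by lifting the swapping loop to a rigid rotation damped to the identity is the right computation and completes the characterization.
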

The reader interested in the proof of this theorem may consult \cite{BB}.
In this section we have closely followed the book by Farb and Margalit \cite{Farb}, which is an excellent reference for learning about mapping class groups.
\subsection{Pure braids}

There is a natural group homomorphism $ \pi: B_n \rightarrow \Sigma_n$ which sends a braid to the corresponding permutation of the marked points. Alternatively, the homomorphism is characterized by the property that:
\[\sigma_i \mapsto (i, i+1).\]

\begin{definition}
The pure braid group on $n$ strands, denoted by $P_n$, is the kernel of the homomorphism $\pi: B_n \rightarrow \Sigma_n$.

\end{definition}

The group $P_n$ consists of those braids such that each strand starts and ends at the same point. These are closed paths in the configuration space $\hat{C}_n(\mathbb{C})$, and in this way one obtains an identification:
\[P_n \cong \pi_1(\hat{C}_n(\mathbb{C}),{\bf p}).\]

\begin{remark}
Let $G$ be an abelian group. An Eilenberg-Maclane space  $K(G, n)$ is a topological space such that: 
\begin{itemize}
\item $\pi_n(K(G,n))\cong G.$
\item $\pi_m(K(G,n))\cong \{0\}$ for $m\neq n$.
\end{itemize}
Given $G$ and $n$, the spaces $K(G,n)$ exist and are well defined up to homotopy equivalence.
The configuration spaces of points in the plane are important examples of Eilenberg-Maclane spaces:
\[C_n(\mathbb{C})\cong K(B_n, 1).\]
\[\hat{C}_n(\mathbb{C}) \cong K(P_n, 1).\]

\end{remark}

\section{Homological representations }\label{section3}

In this section we discuss the Burau representations of braid groups, their geometric interpretation as action
of mapping classes on homology, and their relation with the Alexander-Conway polynomial of links. We also introduce 
the Lawrence-Krammer-Bigelow representation, which has the remarkable property of being faithful.
Our exposition follows closely the book by Kassel and Turaev \cite{KT}.
\subsection{The Burau representation}

Let us denote by $\Lambda$ the ring of Laurent polynomials with integer coefficients:
\[\Lambda:= \mathbb{Z}[t,t^{-1}].\]

In \cite{Burau}, W. Burau constructed representations of the braid group $B_n$ on the space $\Aut(\Lambda^n)$ of $n\times n$ matrices with coefficients in 
$\Lambda$. Let us define the $n\times n$ matrix $U_i$ as follows:

\begin{equation*}
U_i= \left( \begin{array}{cccc} I_{i-1} &0&0&0\\
0&1-t& t&0\\
0&1& 0&0\\
0&0&0&I_{n-i-1}\\
\end{array}\right)
\end{equation*}

\begin{proposition}
There is a unique group homomorphism $\psi_n: B_n \rightarrow \Aut(\Lambda^n)$ characterized by the property that:
\[\sigma_i \mapsto U_i.\] 
This homomorphism is called the Burau representation of $B_n$.
\end{proposition}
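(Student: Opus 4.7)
The plan is to invoke Artin's presentation of $B_n$ (the theorem attributed to Artin above) and reduce the existence of $\psi_n$ to verifying that the matrices $U_1,\dots,U_{n-1} \in \Aut(\Lambda^n)$ satisfy the two families of defining relations. Uniqueness will then follow for free, since the symbols $\sigma_i$ generate $\mathcal{B}_n$ and hence fix any homomorphism from $B_n$ on their values.

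First, I would check that $U_i$ actually lies in $\Aut(\Lambda^n)$ rather than merely in $\mathrm{End}(\Lambda^n)$. The only nontrivial block is the $2\times 2$ block $\left(\begin{smallmatrix} 1-t & t \\ 1 & 0 \end{smallmatrix}\right)$, whose determinant is $-t$. Since $t$ is a unit in $\Lambda = \mathbb{Z}[t,t^{-1}]$, this block is invertible over $\Lambda$, and hence so is $U_i$.

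Next, I would verify the commutation relation $U_i U_j = U_j U_i$ for $|i-j|\ge 2$. This is essentially a bookkeeping observation: the nonidentity $2\times 2$ blocks of $U_i$ and $U_j$ act on disjoint pairs of basis vectors of $\Lambda^n$, so the two matrices are simultaneously block-diagonal with respect to a common decomposition and therefore commute.

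The main (and only genuinely computational) step is verifying the braid relation $U_i U_{i+1} U_i = U_{i+1} U_i U_{i+1}$. Both sides act as the identity outside the three coordinates $i,i+1,i+2$, so the identity reduces to a check for two explicit $3\times 3$ matrices over $\Lambda$, namely
\[
A=\begin{pmatrix} 1-t & t & 0 \\ 1 & 0 & 0 \\ 0 & 0 & 1\end{pmatrix},\qquad
B=\begin{pmatrix} 1 & 0 & 0 \\ 0 & 1-t & t \\ 0 & 1 & 0\end{pmatrix}.
\]
A direct multiplication shows that both $ABA$ and $BAB$ equal
\[
\begin{pmatrix} 1-t & t(1-t) & t^2 \\ 1-t & t & 0 \\ 1 & 0 & 0 \end{pmatrix},
\]
establishing the braid relation. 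I expect this $3\times 3$ check to be the main (if still routine) obstacle; after it, Artin's presentation immediately yields the desired homomorphism $\psi_n$, and the recipe $\sigma_i\mapsto U_i$ determines it uniquely.
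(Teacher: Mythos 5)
Your proposal is correct and follows essentially the same route as the paper: reduce to Artin's presentation, note the block structure gives commutation for $|i-j|\ge 2$, verify invertibility of the $2\times 2$ block (the paper exhibits the explicit inverse, you observe the determinant $-t$ is a unit — equivalent), and check the braid relation as an explicit $3\times 3$ matrix identity, which your displayed product confirms. Nothing further is needed.
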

\begin{proof}
We begin by proving that the matrices $U_i$ are invertible. Let us set 
\begin{equation*}
U= \left( \begin{array}{cc}
1-t&t \\
1&0\\
\end{array}\right).
\end{equation*}
Since the matrices $U_i$ are block diagonal matrices whose blocks are either identity matrices of $U$ it is enough 
to prove that $U$ is an invertible matrix. For this we exhibit an explicit inverse:

\begin{equation*}
U^{-1}= \left( \begin{array}{cc}
0&1 \\
t^{-1}&1-t^{-1}\\
\end{array}\right).
\end{equation*}
One can directly check that $U U^{-1}=U^{-1}U =\id$.
The block form of the matrices implies that $U_i U_j=U_jU_i$ provided that $|i-j|\geq 2$. Finally, it remains to show
that \[U_i U_{i+1}U_i=U_{i+1}U_iU_{i+1}.\] This can be checked in the case $n=3$ and becomes the following exercise in matrix multiplication:
\begin{eqnarray*}
 \left( \begin{array}{ccc}
1-t&t&0 \\
1&0&0\\
0&0&1
\end{array}\right)
 \left( \begin{array}{ccc}
1&0&0 \\
0&1-t&t\\
0&1&0
\end{array}\right)
 \left( \begin{array}{ccc}
1-t&t&0 \\
1&0&0\\
0&0&1
\end{array}\right)=\\
 \left( \begin{array}{ccc}
1&0&0 \\
0&1-t&t\\
0&1&0
\end{array}\right)
 \left( \begin{array}{ccc}
1-t&t&0 \\
1&0&0\\
0&0&1
\end{array}\right)
 \left( \begin{array}{ccc}
1&0&0 \\
0&1-t&t\\
0&1&0
\end{array}\right).
\end{eqnarray*}

\end{proof}

\begin{remark}
Setting $t=1$, each of the matrices $U_i$ becomes a permutation matrix, and one obtains representations
of $B_n$ that factor through the symmetric groups. For this reason, the Burau representations may be seen as 
deformation of the usual permutation representation.
\end{remark}

The Burau representation admits a one dimensional invariant subspace. By taking the quotient of $\Lambda^n$
by this invariant subspace one obtains the reduced Burau representations.

\begin{proposition}
Let $n$ be a natural number greater that $2$ and $V_1,\dots,V_{n-1}$ be the $(n-1) \times (n-1)$ matrices
defined as follows:
\begin{equation*}
V_1= \left( \begin{array}{ccc}
-t&0&0 \\
1&1&0\\
0&0& I_{n-3}
\end{array}\right), \,\,\,
V_{n-1}=\left( \begin{array}{ccc}
I_{n-3}&0&0 \\
0&1&t\\
0&0& -t
\end{array}\right),
\end{equation*}
and for $ 1 <i< n-1:$

\begin{equation*}
V_i= \left( \begin{array}{ccccc}
I_{i-2}&0&0&0&0 \\
0&1&t&0&0\\
0&0&-t&0&0\\
0&0&1&1&0\\
0&0&0&0&I_{n-i-2}
\end{array}\right).
\end{equation*}

Also, Let $C$ be the $n\times n$ matrix over $\Lambda$:
\begin{equation*}
C= \left( \begin{array}{ccccccc}
1&1&1&\dots&1 \\
0&1&1&\dots&1\\
0&0&1&\dots &1\\
\vdots&\vdots&\vdots &\ddots&\vdots \\
0&0&0&\dots &1
\end{array}\right).
\end{equation*}

Then:
\begin{equation*}
C^{-1}U_i C= \left( \begin{array}{cc}
V_i &0 \\
X_i&1
\end{array}\right),
\end{equation*}
where $X_i$ is the row of length $n-1$ which is $(0,\dots ,0)$ in case $i \neq n-1$ and is
$(0, \dots 0,1)$ for $i=n-1$. 
\end{proposition}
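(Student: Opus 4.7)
My plan is to verify the identity by a direct matrix computation, arranged so that only a small number of entries actually need to be tracked. Two structural features make this practical: the matrix $C$ is upper triangular with a one in every position on or above the diagonal, while $U_i-I_n$ is supported in the $2\times 2$ block indexed by rows and columns $i$ and $i+1$.

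First I would write down $C^{-1}$ explicitly. Since $(Cv)_k = \sum_{j \geq k} v_j$, the inverse acts by consecutive differences, so
\[
C^{-1}= \left(\begin{array}{cccccc}
1 & -1 & 0 & \cdots & 0 & 0\\
0 & 1 & -1 & \cdots & 0 & 0\\
\vdots & & \ddots & \ddots & & \vdots\\
0 & 0 & 0 & \cdots & 1 & -1\\
0 & 0 & 0 & \cdots & 0 & 1
\end{array}\right),
\]
and the identity $CC^{-1}=I_n$ is immediate to verify column by column.

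Next, because $U_i$ differs from the identity only in rows $i$ and $i+1$, the product $U_iC$ differs from $C$ only in those two rows. The $k$-th row of $C$ is $(0,\dots,0,1,\dots,1)$ with the initial $1$ in position $k$, so these two new rows read
\[
(U_iC)_{i,\cdot}=(0,\dots,0,1-t,1,\dots,1),\qquad (U_iC)_{i+1,\cdot}=(0,\dots,0,1,1,\dots,1),
\]
where the distinguished entry $1-t$, respectively the leading $1$, occupies position $i$ in each row.

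Finally, left multiplication by $C^{-1}$ is the row operation that replaces row $k$ by row $k$ minus row $k+1$ for $k<n$ and leaves row $n$ alone. Only the row differences at $k=i-1,\,i,\,i+1$ can depart from their values in $C^{-1}C=I_n$, together with the last row in the boundary case $i=n-1$. Performing these three or four specific subtractions and reading off the entries shows that the upper-left $(n-1)\times(n-1)$ block is precisely $V_i$, that the remainder of each of the first $n-1$ rows has a zero in the last column, and that the final row equals $(X_i,1)$. The main obstacle is purely bookkeeping across the three cases $i=1$, $1<i<n-1$, and $i=n-1$, since the nontrivial band of $U_i$ meets the top or the bottom edge of $C$ differently in each case, forcing the correspondingly different form of $V_i$; however, each subcase reduces to the same short row computation with only the boundary behavior changing.
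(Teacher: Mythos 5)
Your proposal is correct and follows essentially the same direct-verification route as the paper: both arguments hinge on computing $U_iC$, which is $C$ with the $(i,i)$ entry replaced by $1-t$ and the $(i+1,i)$ entry replaced by $1$. The only cosmetic difference is that you invert $C$ explicitly and apply $C^{-1}$ as the row-difference operation, whereas the paper sidesteps $C^{-1}$ by instead checking $U_iC=CW_i$ for the claimed block matrix $W_i$.
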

\begin{proof}
We set 
\begin{equation*}
W_i:= \left( \begin{array}{cc}
V_i &0 \\
X_i&1
\end{array}\right).
\end{equation*}
It suffices to prove that $U_i C= C W_i$ for all $i=1, \dots , n-1$.
A direct computation shows that:
\begin{equation*}
U_iC= \left( \begin{array}{ccccccc}
1&1&1&\dots&1 &1\\
0&1&1&\dots&1&1\\
0&0&1&\dots &1&1\\
\vdots&\vdots&\vdots &\ddots&\vdots & \vdots\\
0&0&0&1-t& \dots&1\\
0&0&0&1& \dots&1\\
\vdots&\vdots&\vdots &\ddots&\vdots& \vdots \\
0&0&0&\dots &0 &1
\end{array}\right).
\end{equation*}
Similarly, a simple calculation gives:
\begin{equation*}
CW_i= \left( \begin{array}{ccccccc}
1&1&1&\dots&1 &1\\
0&1&1&\dots&1&1\\
0&0&1&\dots &1&1\\
\vdots&\vdots&\vdots &\ddots&\vdots & \vdots\\
0&0&0&1-t& \dots&1\\
0&0&0&1& \dots&1\\
\vdots&\vdots&\vdots &\ddots&\vdots& \vdots \\
0&0&0&\dots &0 &1
\end{array}\right).
\end{equation*}
Here, the matrices on the right hand side of the equations are those obtained from $C$ by replacing the entry $(i,i)$ by
$1-t$ and the entry $(i+1,i)$ by $1$. This completes the proof.
\end{proof}
Since conjugation with the matrix $C$ is an automorphism, we conclude that the matrices $W_i$ satisfy the braid relations. Moreover, since $\det(W_i)=\det(V_i)$, the matrices $V_i$ are invertible as elements of $\End(\Lambda^{n-1})$. The fact that the last column of the matrices $W_i$ is nonzero only in the last entry also implies that the matrices $V_i$ satisfy the braid relations. These observations allow us to make the following definition:

\begin{definition}
For $n \geq 2$ the reduced Burau representation of the braid group $B_n$ is the representation ${\psi^r_n}: B_n \rightarrow \Aut(\Lambda^{n-1})$ characterized by the property that:
\[\sigma_i \mapsto V_i.\]
For $n=2$ the reduced Burau representation is given by $\psi^r_2(\sigma_1)=-t.$
\end{definition}

\begin{remark}
The Burau representation is known to be faithful for $ n\leq 3$  and not faithful for $n \geq 5$. It is not known whether it is faithful for $n=4$. A theorem of Bigelow asserts that $\psi_4$ is faithful if and only if the Jones polynomial detects the unknot.
\end{remark}

\subsection{Homological interpretation}

Let us denote by $D$ the closed disk in the plane with $n$ distinguished interior points $ p_1< \dots < p_n \in D \cap \mathbb{R}$ and set $Q:=\{ p_1,\dots ,p_n\}$. Observe that for any point $p$ in the interior of $D$:
\[ H_1(D-\{p\},\mathbb{Z})\cong \mathbb{Z},\]
is generated by the homology class of a small circle around $p$ oriented counterclockwise. Set $\Sigma:=D-Q$ and fix a basepoint $d \in \partial D$. We define the group homomorphism $\phi: \pi_1(\Sigma, d)\rightarrow \mathbb{Z}$ by 
\[ [\gamma] \mapsto \sum_{i=1}^n w_i(\gamma),\]
where $w_i(\gamma)$ is the winding number of $\gamma$ around $p_i$.
The kernel of $\phi$ determines a covering space $\tilde{\Sigma} \rightarrow \Sigma$, whose group of covering transformation is the infinite cyclic group $\mathbb{Z}$. We choose a point $\tilde{d}\in \tilde{\Sigma}$ over $d$ and set:

\[\tilde{H}:= H(\tilde{\Sigma}, \mathbb{Z} \tilde{d},\mathbb{Z}).\] 

Let $F$ be a self homeomorphism of $D$ that permutes the elements of $Q$. The restriction $f$ of $F$ to $\Sigma$ preserves the total winding number i.e.
\[ \phi \circ f_*= \phi ,\]
and therefore $f$ can be lifted to a homeomorphism $\tilde{F}$ of $\tilde{\Sigma}$. This construction defines a representation:

\[\Psi_n: \mathsf{Mod}(D,Q) \rightarrow \Aut(\tilde{H}).\] 

Since the group of deck transformations of $\tilde{\Sigma} \rightarrow \Sigma$ is the infinite cyclic group, the homology
group $\tilde{H}$ has an action of $\mathbb{Z}$, and therefore it is a module over the ring $\Lambda$.
The following theorem shows that the homological representation constructed above coincides with the Burau representation.
\begin{theorem}
There exists a an isomorphism of $\Lambda$-modules $\mu: \Lambda^n \rightarrow \tilde{H}$ such that the 
following diagram commutes:
\[
\xymatrix{
B_n \ar[r]^\cong \ar[d]_{\psi_n} & \mathsf{Mod}(D,Q)\ar[d]^{\Psi_n}\\
\Aut(\Lambda^n) \ar[r]^{\underline{\mu}} & \Aut(\tilde{H}), }
\]
Here $\underline{\mu}$ is the isomorphism induced by $\mu$.
\end{theorem}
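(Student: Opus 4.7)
The plan is to exhibit an explicit free $\Lambda$-basis $\tilde u_1,\dots,\tilde u_n$ of $\tilde H$ built from meridians around the punctures, define $\mu$ by $e_i \mapsto \tilde u_i$, and then verify that each half-twist $H_i = \psi(\sigma_i)$ acts on this basis by the matrix $U_i$.

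For each $i$, I would fix a simple based loop $\gamma_i \in \pi_1(\Sigma, d)$ encircling only $p_i$ counterclockwise, so that $\phi(\gamma_i)=1$ and $\phi(\gamma_j)=0$ for $j\neq i$. These loops freely generate $\pi_1(\Sigma, d)$. The unique lift $\tilde\gamma_i$ of $\gamma_i$ starting at $\tilde d$ ends at $t\cdot\tilde d$, so it is a relative $1$-cycle in $(\tilde\Sigma, \mathbb{Z}\tilde d)$. Set $\tilde u_i := [\tilde\gamma_i]$. To see that the $\tilde u_i$ form a free $\Lambda$-basis of $\tilde H$, I would put an equivariant CW structure on $\tilde\Sigma$ by lifting a CW decomposition of $\Sigma$ with one $0$-cell at $d$ and $1$-cells $\gamma_1,\ldots,\gamma_n$ (which realizes $\Sigma$ as homotopy equivalent to a wedge of $n$ circles), and compute the relative cellular chain complex directly: in degree $0$ one gets $\Lambda\cdot\tilde d$, in degree $1$ one gets $\bigoplus_i \Lambda\cdot\tilde\gamma_i$, and reading off the boundary map yields $\tilde H \cong \bigoplus_i \Lambda\cdot\tilde u_i$.

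For compatibility with the action, the key point is that the construction of $\Psi_n$ is natural with respect to the Artin representation $B_n \hookrightarrow \Aut(\pi_1(\Sigma, d))$. Under this representation $H_i$ fixes each $\gamma_j$ with $j\notin\{i, i+1\}$ and acts on the remaining two generators by a standard braid formula of the shape $\gamma_i \mapsto \gamma_i\gamma_{i+1}\gamma_i^{-1}$, $\gamma_{i+1}\mapsto\gamma_i$. Lifting each image word to $\tilde\Sigma$ starting at $\tilde d$ via the elementary concatenation rule $\widetilde{\alpha\beta} = \tilde\alpha + t^{\phi(\alpha)}\cdot\tilde\beta$ in $C_1(\tilde\Sigma)$, and projecting to $\tilde H$, yields $\Lambda$-linear combinations of $\tilde u_i$ and $\tilde u_{i+1}$ whose coefficients are exactly the columns of the central $2\times 2$ block of $U_i$. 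Since both $\underline\mu\circ\psi_n$ and $\Psi_n\circ\psi$ are group homomorphisms that agree on the generators $\sigma_i$, commutativity of the diagram follows automatically.

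The main obstacle is the bookkeeping of orientations and deck-transformation conventions in the last step: depending on the sign conventions chosen for the loops $\gamma_i$, for the direction of the half-twist $H_i$, and for the base lift $\tilde d$, the computed block can differ from $U_i$ by a change of basis, a transpose, or an overall power of $t$. The nontrivial content of the proof is to fix these conventions consistently so that the match with $U_i$ is exact on the nose; once this is arranged, the rest reduces to routine cellular topology on the infinite cyclic cover.
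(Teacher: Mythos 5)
Your proposal is correct and is essentially the standard argument: the paper itself gives no proof, deferring to \cite{KT} (Theorem 3.7), and your outline reproduces exactly that proof --- the lifts $\tilde u_i$ of meridians form a free $\Lambda$-basis of $H_1(\tilde\Sigma,\mathbb{Z}\tilde d)$ by the cellular chain complex of the lifted wedge-of-circles model, and the half-twists act through the Artin automorphisms together with the lifting rule $\widetilde{\alpha\beta}=\tilde\alpha+t^{\phi(\alpha)}\tilde\beta$. The transpose ambiguity you flag as the main obstacle is genuinely there but harmless: the block you compute, $\left(\begin{smallmatrix}1-t&1\\ t&0\end{smallmatrix}\right)$, is carried to $U$ by conjugation with $\mathrm{diag}(1,t,\dots,t^{n-1})$, i.e.\ by taking $\mu(e_i)=t^{\,i-1}\tilde u_i$ instead of $\tilde u_i$, so the diagram can indeed be made to commute on the nose.
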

\begin{proof}
For a proof the reader may consult  \cite{KT} Theorem 3.7.
\end{proof}

\subsection{Braids, knots and the Alexander Conway polynomial}

A knot is an isotopy class of an embedding of a circle in three dimensional euclidian space. More generally, a link with $k$ components is an isotopy class of an embedding of $k$ circles in $\mathbb{R}^3$. The fundamental problem of knot theory is the classification of knots and links. 
The following is a typical example of a knot:

\[ \xy 
(6,9)*{}="1"; 
(-8.5,-1)*{}="2"; 
"1";"2" **\crv{~*=<.5pt>{.} (0,30)}?(.75);
(-6.5,8)*{}="1"; 
(-.5,-9)*{}="2"; 
"1";"2" **\crv{~*=<.5pt>{.} (-28.5,9.3)}?(.7);
(-9.5,-3.35)*{}="1"; 
(8.5,-3)*{}="2"; 
"1";"2" **\crv{~*=<.5pt>{.} (-17.67,-24.19)}?(.7);
(1,-10)*{}="1"; 
(6.5,7.13)*{}="2"; 
"1";"2" **\crv{~*=<.5pt>{.} (17.67,-24.19)}?(.7);
(11,-1)*{}="1"; 
(-4,8)*{}="2"; 
"1";"2" **\crv{~*=<.5pt>{.} (28.5,9.3)}?(.93);
\endxy \]

The relationship between links and braids is given by the operation of closure of a braid,
which produces a link by connecting the strands of the braid, as explained in the following diagram:

\[ \xy 
(0,0)*+{\includegraphics{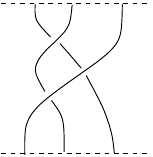}}; 
(0,5)*{}; 
\endxy 
\quad 
\mapsto 
\quad 
\xy 
(0,0)*+{\includegraphics{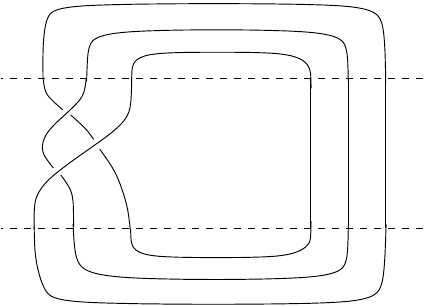}}; 
\endxy \]

A link obtained as the closure of a braid comes with a natural orientation, given by declaring the strands
of the braid to be flowing downwards.
\begin{theorem}[Alexander \cite{Alexander}]
Any oriented link can be obtained as the closure of a braid.
\end{theorem}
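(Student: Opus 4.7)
The plan is to use Alexander's braiding trick: given an oriented link diagram, pick a point $\ast$ in the plane (the prospective braid axis, viewed as piercing the plane of the diagram), and show that the diagram can be modified by an isotopy of $\mathbb{R}^3$ to a new diagram in which every strand, as one traverses it in the direction of the orientation, winds around $\ast$ in the counterclockwise sense. Once this ``braided around $\ast$'' condition is achieved, cutting the diagram along a ray from $\ast$ to infinity produces a braid whose closure is the original link. So the whole content is the reduction to this special form.

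To carry out the reduction, first present the link by a generic oriented diagram in the plane, chopped into finitely many smooth arcs meeting only at crossings. For each arc $\alpha$ let us say it is \emph{good} if, as we traverse $\alpha$ in the direction of orientation, the polar angle about $\ast$ is (weakly) increasing, and \emph{bad} otherwise. By subdividing, we may assume each arc is either entirely good or entirely bad, and that no bad arc contains $\ast$ in its convex hull in a degenerate way. The key step is Alexander's move: if $\alpha$ is a bad arc with endpoints $a,b$, replace $\alpha$ by an arc $\alpha'$ with the same endpoints that goes the \emph{long way} around $\ast$ (counterclockwise), and is pushed either entirely above or entirely below the rest of the diagram in three-space. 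This move does not change the isotopy class of the link (it is realized by an ambient isotopy pulling $\alpha$ across $\ast$), and the new arc $\alpha'$ is good.

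One then iterates, reducing the number of bad arcs by one at each step, until all arcs are good. At that point the diagram is braided around $\ast$: parametrizing each strand by the polar angle $\theta \in [0, 2\pi]$ and cutting along the ray $\theta = 0$ yields a braid $\beta \in B_n$, where $n$ is the number of intersections of the link with the ray; the closure $\hat\beta$ obtained by reconnecting the top and bottom endpoints around the back of the axis is by construction the original link with its given orientation.

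The main obstacle is making sure the Alexander move is genuinely an ambient isotopy and can be executed without creating new bad arcs in a way that fails to terminate. The careful point is that when pushing $\alpha'$ over (or under) the rest of the diagram, one must allow $\alpha'$ to cross \emph{all} other strands it meets, recording the correct over/under information consistently; once this is done the counting argument — each move strictly decreases the number of bad arcs — guarantees termination in finitely many steps. A complete writeup of these details can be found in Kassel--Turaev~\cite{KT}, and alternative modern proofs using Seifert circles (Vogel's algorithm) or the Yamada--Morton approach give effective versions of the same theorem.
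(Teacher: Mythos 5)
The paper offers no proof of this theorem --- it simply refers to Kassel--Turaev --- and your sketch is precisely the classical Alexander braiding argument that the cited reference carries out, so in substance you are reproducing the intended proof. The one point where your write-up is too loose to stand on its own is the treatment of a bad arc that is involved in several crossings, some as an overpass and some as an underpass: such an arc cannot be ``pushed entirely above or entirely below the rest of the diagram'' by an ambient isotopy without changing the link, so before applying the Alexander move you must further subdivide each bad arc so that every piece meets at most one crossing (equivalently, is purely an overpass or purely an underpass); only then is throwing it across the axis, over or under respectively, a genuine isotopy. With that refinement your counting argument (each move converts one bad arc into a good one and leaves the good/bad status of all other arcs unchanged, since the new crossings lie on the new arc) does terminate, and the rest of the argument --- cutting the braided diagram along a ray from the axis to recover $\beta$ with $\hat\beta = L$ --- is correct as stated.
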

\begin{proof} For a proof of this theorem the reader may consult \cite{KT} Theorem 2.3.
\end{proof}
The theorem above immediately raises the question of deciding when two braids produce
the same oriented link upon closure. This question is answered by Markov's theorem.
We say that two braids $\beta ,\beta' \in B_n$ are related by the Markov move $M_1$ if their are conjugate, i.e. 
if there exists a $\gamma \in B_n $ such that $ \gamma \beta \gamma^{-1}=\beta'$.
The following picture describes the action of the operation $M_1$.

\[
\qquad
\xy 
(0,0)*+{\includegraphics{R3.pdf}}; 
\endxy 
\quad 
\mapsto 
\quad 
\xy 
(0,0)*+{\includegraphics{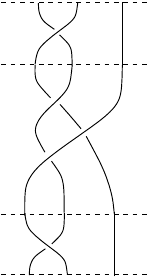}}; 
\endxy \]

Let $\iota_n: B_n \rightarrow B_{n+1}$ be the natural inclusion of groups characterized by $ \iota(\sigma_i)=\sigma_i$.
Two braids $\beta \in B_n, \beta'\in B_{n+1}$ are related by the second Markov move  $M_2$ if $ \sigma_n^{\pm} \iota(\beta)= \beta'$. The following picture describes the action of the second Markov move on a braid:

\[ 
\qquad 
\xy 
(0,0)*+{\includegraphics{R3.pdf}}; 
\endxy 
\quad 
\mapsto 
\quad 
\xy 
(0,0)*+{\includegraphics{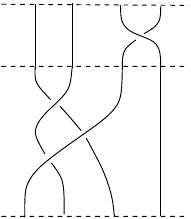}}; 
\endxy
\]
\begin{definition}
The Markov equivalence relation $M$ is the smallest equivalence relation on the set of all braids such that
two braids are equivalent if they are related by the Markov moves $M_1$ and  $M_2$. We say that two braids are Markov equivalent if they are equivalent with respect to $M$.
\end{definition}

\begin{theorem}[Markov \cite{Markov}]
Two braids are Markov equivalent if and only if their closures represent the same oriented link.
\end{theorem}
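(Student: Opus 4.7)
The theorem requires establishing both implications. The first implication, that Markov equivalent braids yield isotopic closures, reduces to checking the two moves separately. For the conjugation move $M_1$, the closure of $\gamma\beta\gamma^{-1}$ is obtained by stacking $\gamma^{-1}$, $\beta$, and $\gamma$ vertically and joining top to bottom; the $\gamma$ at the bottom can be slid up along the closing strands until it cancels with the $\gamma^{-1}$ on top, giving an ambient isotopy from $\widehat{\gamma\beta\gamma^{-1}}$ to $\hat\beta$. For the stabilization move $M_2$, the braid $\sigma_n^{\pm 1}\iota_n(\beta)$ differs from $\iota_n(\beta)$ by a single crossing at the end; in the closure this produces a small kink, a Reidemeister~I loop, which can be smoothed away by an isotopy in three-space.

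For the reverse implication, which carries the substantive content of the theorem, I would follow the standard route via Reidemeister's theorem on link diagrams. Suppose $\hat\beta$ and $\hat{\beta'}$ are isotopic oriented links; then their diagrams, projected compatibly with the braid axis, differ by a finite sequence of oriented Reidemeister moves $R_1$, $R_2$, $R_3$ together with planar isotopies. The plan is to show that each elementary modification of a closed braid diagram can be realized, perhaps after a re-braiding step, by a sequence of Markov moves on the underlying braid word.

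The principal difficulty lies in controlling this re-braiding. A generic Reidemeister move or planar isotopy will take the diagram out of closed braid form, and Alexander's algorithm must then be reapplied to produce a new braid presentation. The key technical lemma asserts that any two braid presentations produced by Alexander's procedure applied to a single oriented link diagram are related by Markov moves; various refinements of the algorithm, such as Morton's threading construction, make this statement precise and canonical. Granted this lemma, the theorem reduces to a case analysis over the Reidemeister moves, verifying that each local change decomposes into a sequence of $M_1$ and $M_2$ moves. As this case analysis is lengthy and combinatorial in nature, I would refer the reader to the detailed exposition in \cite{KT} for the complete argument.
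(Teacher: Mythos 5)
Your proposal is correct and follows essentially the route the paper points to: the paper gives no proof beyond citing \cite{KT}, and your argument for the easy direction (sliding the conjugating braid around the closure for $M_1$, smoothing the stabilization kink by a Reidemeister I move for $M_2$) together with your outline of the converse via Reidemeister moves and the key lemma that all braid presentations produced by the Alexander/threading procedure from one diagram are Markov equivalent is exactly the strategy (Traczyk--Morton style) of the proof in \cite{KT}. Since the combinatorial case analysis is deferred to the same reference the paper cites, nothing further is needed here.
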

\begin{proof}
The interested reader may consult \cite{KT} Theorem 2.8.
\end{proof}

\begin{definition}
A Markov function $f$ with values in a set $X$ is a sequence of functions $f_n: B_n \rightarrow X$ with the property that:

\begin{itemize}
\item $f_n(\gamma^{-1} \beta \gamma)= f_n ( \beta)$.
\item $f_n(\beta)=f_{n+1}(\sigma_n \iota_n(\beta))=f_{n+1}(\sigma_n^{-1} \iota_n(\beta))$.
\end{itemize}
\end{definition}

Clearly, Markov's theorem implies that any Markov function produces a link invariant with values in the set $X$. Let us now describe a Markov function provided by the Burau representation.
Let $g: \Lambda= \mathbb{Z}[t,t^{-1}]\rightarrow \mathbb{Z}[s,s^{-1}]$ be the ring homomorphism characterized by 
$t \mapsto s^2$. We define the group homomorphsim \[\langle \,\, \rangle: B_n \rightarrow \mathbb{Z},\]  by setting
$\langle \sigma_i \rangle =1$.

 For $n\geq 2$ we define the function $f_n: B_n \rightarrow X=\mathbb{Z}[s,s^{-1}]$ by the formula:

\[ f_n(\beta)=(-1)^{n+1} \frac{s^{- \langle \beta \rangle}(s-s^{-1})}{s^n-s^{-n}}g  \left(\det \left(\psi^r_n(\beta)-\id \right)\right).\]

\begin{theorem}
The sequence of functions $f_n$ defined above is a Markov function with values in the set $X=\mathbb{Z}[s,s^{-1}]$. 
\end{theorem}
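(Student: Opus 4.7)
The plan is to verify the two Markov axioms separately. Conjugation invariance is immediate: since $\psi^r_n$ is a group homomorphism, $\psi^r_n(\gamma^{-1}\beta\gamma) - \id$ is conjugate to $\psi^r_n(\beta) - \id$ and hence has the same determinant; and $\langle\cdot\rangle : B_n \to \mathbb{Z}$ is a homomorphism into an abelian group, so $\langle\gamma^{-1}\beta\gamma\rangle = \langle\beta\rangle$. Therefore $f_n(\gamma^{-1}\beta\gamma) = f_n(\beta)$.

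For the stabilization axiom, the first step is to observe that $\psi^r_{n+1}\circ\iota_n$ takes a block lower-triangular form. Inspection of the matrices $V_i^{B_{n+1}}$ for $1\leq i\leq n-1$ shows that each image $\psi^r_{n+1}(\iota_n(\sigma_i))$ is of the form $\bigl(\begin{smallmatrix} \psi^r_n(\sigma_i) & 0 \\ u_i & 1 \end{smallmatrix}\bigr)$ for an explicit row $u_i$; since matrices of this form are closed under multiplication,
\[
\psi^r_{n+1}(\iota_n(\beta)) = \begin{pmatrix} \psi^r_n(\beta) & 0 \\ u(\beta) & 1 \end{pmatrix}
\]
for some row $u(\beta)\in\Lambda^{n-1}$. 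Writing $V_n^{B_{n+1}} = \bigl(\begin{smallmatrix} I_{n-1} & v \\ 0 & -t \end{smallmatrix}\bigr)$ with $v=(0,\dots,0,t)^T$, I multiply out, subtract the identity, and then add the last row to the next-to-last row to eliminate the off-diagonal entry in the last column. Cofactor expansion along the last column then yields
\[
\det\bigl(\psi^r_{n+1}(\sigma_n\iota_n(\beta))-\id\bigr) = -(t+1)\det\bigl(\psi^r_n(\beta)-\id\bigr) - t\,\Delta(\beta),
\]
where $\Delta(\beta)$ denotes the determinant of the matrix obtained from $\psi^r_n(\beta)-\id$ by replacing its last row with $u(\beta)$.

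The main obstacle is then the polynomial identity
\[
(1+t+\cdots+t^{n-1})\,\Delta(\beta) = -(1+t+\cdots+t^{n-2})\det\bigl(\psi^r_n(\beta)-\id\bigr),
\]
equivalently $\det(\psi^r_{n+1}(\sigma_n\iota_n(\beta))-\id) = -\frac{t^{n+1}-1}{t^n-1}\det(\psi^r_n(\beta)-\id)$ as Laurent polynomials. I would prove it by induction on the word length of $\beta$, exploiting the explicit recursion $u(\beta_1\beta_2) = u(\beta_1)\psi^r_n(\beta_2) + u(\beta_2)$ that follows from the multiplication rule for the block form, together with the base case verified on each $\sigma_i$ directly from the matrices $V_i$. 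Granted the determinant identity, substituting into $f_{n+1}$, using $\langle\sigma_n\iota_n(\beta)\rangle = \langle\beta\rangle + 1$, and noting the algebraic simplification $\frac{s^{2n+2}-1}{s^{2n}-1} = s\cdot\frac{s^{n+1}-s^{-n-1}}{s^n-s^{-n}}$, one sees that the prefactor change exactly cancels the factor $-\frac{t^{n+1}-1}{t^n-1}$, giving $f_{n+1}(\sigma_n\iota_n(\beta))=f_n(\beta)$. The case of $\sigma_n^{-1}$ is analogous, using $(V_n^{B_{n+1}})^{-1}$; a complete treatment appears in Kassel-Turaev \cite{KT}.
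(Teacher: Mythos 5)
The paper itself does not prove this theorem --- it simply cites Kassel--Turaev (Lemma 3.12) --- so your sketch is strictly more informative than what the text offers, and it is in fact an outline of the standard argument from that reference. The skeleton is sound and the formulas you state check out: conjugation invariance is exactly as easy as you say; the block form $\psi^r_{n+1}(\iota_n(\beta))=\bigl(\begin{smallmatrix}\psi^r_n(\beta)&0\\ u(\beta)&1\end{smallmatrix}\bigr)$ is correct (the generators $\sigma_1,\dots,\sigma_{n-1}$ of $B_{n+1}$ do have this shape, and the cocycle rule $u(\beta_1\beta_2)=u(\beta_1)\psi^r_n(\beta_2)+u(\beta_2)$ follows); the expansion $\det(\psi^r_{n+1}(\sigma_n\iota_n(\beta))-\id)=-(t+1)\det(\psi^r_n(\beta)-\id)-t\,\Delta(\beta)$ is correct (I verified it by cofactor expansion in the last column, though your stated row operation does not literally kill the off-diagonal entry $t$ --- it turns it into $-1$ --- so the narration of that step is off even though the conclusion is right); and the final bookkeeping with $q\mapsto s^2$, $\langle\sigma_n\iota_n(\beta)\rangle=\langle\beta\rangle+1$ and $\frac{s^{2n+2}-1}{s^{2n}-1}=s\cdot\frac{s^{n+1}-s^{-n-1}}{s^n-s^{-n}}$ does close the loop. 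For $\sigma_n^{-1}$ one gets the extra factor $-t^{-1}\frac{t^{n+1}-1}{t^n-1}$, whose $t^{-1}$ is absorbed by the shift $\langle\beta\rangle\mapsto\langle\beta\rangle-1$, so ``analogous'' is fair.

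The one genuinely soft spot is your key identity
$(1+t+\cdots+t^{n-1})\Delta(\beta)=-(1+t+\cdots+t^{n-2})\det(\psi^r_n(\beta)-\id)$.
The identity is true (it holds in every example I tested), but the proposed proof by induction on word length is not obviously workable as stated: neither $\Delta(\beta)$ nor $\det(\psi^r_n(\beta)-\id)$ transforms multiplicatively under right multiplication by a generator, so the cocycle relation for $u$ alone does not set up a closing induction. The clean route --- and the one in Kassel--Turaev --- is to use the covector $(1,t,\dots,t^{n-1})$ fixed by the unreduced Burau matrices; transporting it through the conjugation by $C$ yields a linear relation expressing $(1+\cdots+t^{n-1})\,u(\beta)$ as $-(1,1+t,\dots,1+\cdots+t^{n-2})\cdot(\psi^r_n(\beta)-\id)$, i.e.\ the replaced last row is, up to the scalar $1+\cdots+t^{n-1}$, an explicit combination of the other rows plus $-(1+\cdots+t^{n-2})$ times the original last row; the determinant identity then drops out by multilinearity. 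I would either carry out that argument or state the identity as a quoted lemma, rather than lean on the word-length induction.
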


\begin{proof}
The interested reader may consult \cite{KT} Lemma 3.12.
\end{proof}

The Markov function $f_n$ defined above in terms of the Burau representation defines an invariant of oriented links known as the Alexander-Conway polynomial.

\begin{definition}
The Alexander-Conway polynomial of an oriented link $L$, denoted $\nabla(L)$, is the polynomial
$f_n(\beta) \in \mathbb{Z}[s,s^{-1}]$, where $\beta$ is any braid whose closure is $L$.
\end{definition}

For the purpose of computation it is often useful to describe the skein relations for the Alexander Conway polynomials.
These relations express the change in the value of the polynomial that occurs when modifying the crossing of a link diagram. A Conway triple $L_+, L_-, L_0$ is a triple of link diagrams that differ only locally at one crossing, which looks, respectively, as follows: 

\[
\xy 
(6,6)*{}="tl"; 
(-6,6)*{}="tr"; 
(6,-6)*{}="bl"; 
(-6,-6)*{}="br"; 
{\ar|{\hole \; } "bl";"tr"}; 
{\ar "br";"tl"}; 
(0,-10)*{L_{+}}; 
\endxy 
\qquad \qquad 
\xy 
(-6,6)*{}="tl"; 
(6,6)*{}="tr"; 
(-6,-6)*{}="bl"; 
(6,-6)*{}="br"; 
{\ar|{\hole \; } "bl";"tr"}; 
{\ar "br";"tl"}; 
(0,-10)*{L_{-}}; 
\endxy 
\qquad \qquad 
\xy 
(-6,6)*{}="tl"; 
(6,6)*{}="tr"; 
(-6,-6)*{}="bl"; 
(6,-6)*{}="br"; 
"bl";"tl" **\crv{(-1,0)}?(1)*\dir{>}; 
"br";"tr" **\crv{(1,0)}?(1)*\dir{>}; 
(0,-10)*{L_{0}}; 
\endxy 
\]

\begin{theorem}
The Alexander-Conway polynomial is the unique invariant of oriented links $\nabla$ with values in $\mathbb{Z}[s,s^{-1}]$ which satisfies the following properties:
\begin{itemize}
\item $\nabla(L)=1$ if $L$ is the unknot. 
\item For any Conway triple $L_+,L_-,L_0$:
\[\nabla(L_+)-\nabla(L_-)=(s^{-1}-s)\nabla(L_0).\]
\end{itemize}
\end{theorem}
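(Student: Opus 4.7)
The proof splits into existence (the polynomial $\nabla$ just constructed satisfies the two axioms) and uniqueness (any invariant meeting both axioms coincides with $\nabla$).

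For existence I would first check the unknot normalization. The unknot is the closure of $\sigma_1\in B_2$, so $\nabla(\mathrm{unknot})=f_2(\sigma_1)$ can be computed directly. With $\psi^r_2(\sigma_1)=-t$ one has $\det(\psi^r_2(\sigma_1)-\id)=-(1+t)$; substituting $t=s^2$ and inserting into the formula for $f_2$, the prefactor $s^{-1}(s-s^{-1})/(s^2-s^{-2})=s^{-1}/(s+s^{-1})$ multiplies $-(1+s^2)$ to yield exactly $1$ after a short simplification. The skein relation is local: choose a braid presentation in which the crossing under consideration is a single letter $\sigma_i$, so that $L_+$, $L_-$ come from replacing this letter by $\sigma_i$ and $\sigma_i^{-1}$, while $L_0$ comes from smoothing it. Then the identity reduces to a matrix relation of the form
\[
V_i - V_i^{-1} = (1-t)\,E_i,
\]
for an explicit rank-one matrix $E_i$ supported on the pair of strands meeting at $\sigma_i$. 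Expanding the determinants $\det(\psi^r_n(\beta)-\id)$ along the row and column carrying $E_i$ isolates a factor of $g(1-t)=1-s^2$ together with a reduced determinant that matches the $L_0$ contribution after the Markov move that absorbs the extra strand. The outer prefactors in $f_n$ and $f_{n-1}$ then recombine so that the scalar $(s^{-1}-s)$ arises precisely.

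For uniqueness, let $\nabla'$ be any invariant satisfying both axioms. By Alexander's theorem every oriented link admits a diagram with some number of crossings $c$. I would run a double induction, outer on $c$ and inner on the number $u(D)$ of crossings that one must switch in the chosen diagram $D$ to produce an unlink. Applying the skein relation at such a ``bad'' crossing expresses $\nabla'(L_\pm)$ in terms of $\nabla'(L_\mp)$ (smaller $u$) and $\nabla'(L_0)$ (smaller $c$). A preliminary lemma, itself obtained by applying the skein relation to a pair of opposite crossings introduced on a trivial arc near a splitting sphere, shows that $\nabla'$ vanishes on any split link. These ingredients together with $\nabla'(\mathrm{unknot})=1$ drive the recursion down to fully determined values, forcing $\nabla'=\nabla$.

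The main obstacle is the existence half, specifically the skein verification. The difficulty is that the determinant formula for $f_n$ does not transform transparently under smoothing, because smoothing lowers the strand count and therefore changes the size of the ambient Burau matrix. The core work is the explicit rank-one factorization of $V_i-V_i^{-1}$ together with a careful bookkeeping of writhe exponents and normalizing prefactors, so that after $t\mapsto s^2$ the combination $(-1)^{n+1}s^{-\langle\beta\rangle}(s-s^{-1})/(s^n-s^{-n})$ matches across the $n$- and $(n-1)$-strand sides of the identity.
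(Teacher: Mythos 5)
The paper itself offers no proof of this theorem (it only cites Kassel--Turaev, Theorem 3.13), so your proposal must stand on its own. Much of it does: the normalization check is correct ($f_2(\sigma_1)=1$ follows from $\psi^r_2(\sigma_1)=-t$ exactly as you computed), and the uniqueness half is the standard double induction and is sound, provided you add that the recursion terminates at unlinks whose values are forced: a Conway triple in which $L_+$ and $L_-$ are both an unlink with a kink and $L_0$ is the unlink with one more component gives $0=(s^{-1}-s)\nabla'(\mathrm{unlink}_k)$, so $\nabla'(\mathrm{unlink}_k)=0$ for $k\geq 2$. (Vanishing on arbitrary split links is more than you need and requires a slightly different triple than the one you describe.) You should also record the refinement of Alexander's theorem you use implicitly, namely that a diagram with a marked crossing can be braided so that the marked crossing becomes a single letter $\sigma_i^{\pm 1}$.

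The genuine gap is in the skein verification, and it stems from a misconception: smoothing a braid-type crossing does \emph{not} lower the strand count. If $L_\pm$ are the closures of $\beta\sigma_i^{\pm 1}\in B_n$, then $L_0$ is the closure of $\beta\in B_n$ itself --- the oriented smoothing of $\sigma_i$ is the identity braid on those two strands --- so all three determinants are $(n-1)\times(n-1)$ and no Markov move ``absorbs an extra strand.'' The ``main obstacle'' you identify is therefore not present, and the plan built around it (row/column expansion against a rank-one perturbation, matched to an $(n-1)$-strand determinant after destabilization) does not correspond to a correct computation. What one actually needs, writing $M=\psi^r_n(\beta)$, is the single identity
\[
\det(MV_i-\mathrm{id})-t\,\det(MV_i^{-1}-\mathrm{id})=(1-t)\det(M-\mathrm{id}),
\]
which, after $t\mapsto s^2$ and insertion of the prefactors $s^{-\langle\beta\sigma_i^{\pm1}\rangle}=s^{-\langle\beta\rangle\mp 1}$, is precisely $f_n(\beta\sigma_i)-f_n(\beta\sigma_i^{-1})=(s^{-1}-s)f_n(\beta)$. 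This identity follows from two observations: the Cayley--Hamilton relation $(V_i+t)(V_i-\mathrm{id})=0$, equivalently $V_i-tV_i^{-1}=(1-t)\,\mathrm{id}$, gives the matrix equation $(MV_i-\mathrm{id})-t(MV_i^{-1}-\mathrm{id})=(1-t)(M-\mathrm{id})$; and $V_i$, $V_i^{-1}$, $\mathrm{id}$ agree in every column except the $i$-th, so the three matrices above agree outside one column and multilinearity of the determinant in that column turns the matrix equation into the determinant equation. Your identity $V_i-V_i^{-1}=(1-t)E_i$ with $E_i$ of rank one is true (one can take $E_i=t^{-1}(\mathrm{id}-V_i)$), but it is not the relation that makes the computation close up; the weighted relation $V_i-tV_i^{-1}=(1-t)\,\mathrm{id}$ is, and the factor $t$ there is exactly what the writhe exponents $s^{\mp 1}$ in the prefactor are designed to compensate.
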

\begin{proof}
The proof can be found in \cite{KT} Theorem 3.13.
\end{proof}

It is fairly easy to show that there is at most one invariant satisfying the skein relations above. The existence is the more interesting part of the theorem, and that is where the explicit construction of the invariants, provided by the Burau representations, is needed.

\subsection{The Lawrence-Krammer-Bigelow representation}

We will now discuss another homological representation introduced by R. Lawrence \cite{Lawrence} and studied by D. Krammer \cite{Krammer} and S. Bigelow \cite{Bigelow}. Let us fix a natural number $n\geq 1$ and denote by $D$ the unit disk with $n$ distinguished points $Q=\{ p_1, \dots, p_n\}$. We also set \[ \Sigma:= D-Q, \,\,\mathcal{F}:= \hat{C}_2(\Sigma), \,\,\mathcal{C}:=C_2(\Sigma) .\]

Here we use the notation introduced above for configuration spaces. That is, $\mathcal{F}$ is the configuration space of two ordered points in $\Sigma$, and $\mathcal{C}$ is the configuration space of two unordered points in $\Sigma$.
There is a natural double sheeted covering $\mathcal{F} \rightarrow \mathcal{C}$.
A point in $\mathcal{C}$ is an unordered set of distinct points of $\Sigma$ and we use the notation $\{x,y\}$ for points of $\mathcal{C}$. A path in $\mathcal{C}$ can be represented as a pair of paths $\{\zeta_1, \zeta_2\}$ where $\zeta_i: I \rightarrow \Sigma$ such that for all $s\in I, \zeta_1(s)\neq \zeta_2(s)$. A loop in $\mathcal{C}$ is a path such that 
$\{\zeta_1(0), \zeta_2(0)\}=\{ \zeta_1(1), \zeta_2(1)\}$. That is, a pair of maps $(\zeta_1, \zeta_2)$ defines a loop in $\mathcal{C}$ if either they define a loop in $\mathcal{F}$ or define a path in $\mathcal{F}$ that permutes the starting points. 

Given a loop $\zeta=\{\zeta_1, \zeta_2\} $ in $\mathcal{C}$ we define the number $ w(\zeta)\in \mathbb{Z}$ as follows. If the pair $(\zeta_1, \zeta_2)$ defines a loop in $\mathcal{F}$, then each of the $\zeta_i$ are loops in $\Sigma$ and we set \[w(\zeta):= \phi(\zeta_1)+ \phi(\zeta_2),\]
where, as above, $\phi$ is the group homomorphism that counts the total winding number with respect to $Q$.
If the pair $(\zeta_1,\zeta_2)$ does not define a loop in $\mathcal{F}$, then it defines a path that permutes the endpoints. Therefore $ \zeta_1 \circ \zeta_2$ defines a loop is $\Sigma$ and we set:
\[w(\zeta):= \phi(\zeta_1 \circ \zeta_2).\]

We define a second numerical invariant $u(\zeta)$ as follows. Consider the map $\gamma_\zeta: I \rightarrow S^1$ given by the formula:

\[ \gamma_\zeta (s) :=\left( \frac{\zeta_1(s)- \zeta_2(s)}{|\zeta_1(s)-\zeta_2(s)|} \right)^2.\]
Observe that this map is well defined for $\zeta$, since $\zeta_1$ and $\zeta_2$ play a symmetric role. Since $\zeta$ is a loop, we also conclude that $\gamma_\zeta(0)= \gamma_\zeta(1)$ and therefore $\gamma_\zeta$ defines a map
$S^1 \rightarrow S^1$ whose degree is the value of $u$ at $\zeta$. It is a simple exercise to prove that the maps $w$ and $u$ are well defined on homotopy classes and are group homomorphisms $\pi_1(\mathcal{C})\rightarrow \mathbb{Z}$.

We fix two distinct points $d_1,d_2 \in \partial(\Sigma)$ and take $\{d_1,d_2\}$ as the basepoint of $\mathcal{C}$.
We denote by \[\rho: \pi_1(\mathcal{C}, \{d_1,d_2\})\rightarrow \mathbb{Z}\oplus \mathbb{Z},\]
the group homomorphism given by the formula:
\[\rho(\zeta)= q^{w(\zeta)}t^{ u(\zeta)},\]
where $q$ and $t$ are the canonical generators of $ \mathbb{Z} \oplus \mathbb{Z}$.
\begin{lemma}
The group homomorphism $\rho: \pi_1(\mathcal{C}, \{d_1,d_2\})\rightarrow \mathbb{Z}\oplus \mathbb{Z}$ is surjective.
\end{lemma}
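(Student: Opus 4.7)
The plan is to exhibit explicit loops in $\mathcal{C}$ based at $\{d_1,d_2\}$ whose images under $\rho$ are the generators $q$ and $t$ of $\mathbb{Z}\oplus\mathbb{Z}$; this will immediately give surjectivity. So I need one loop $\alpha$ with $w(\alpha)=1,\,u(\alpha)=0$ and another loop $\beta$ with $w(\beta)=0,\,u(\beta)=1$.

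For $\alpha$, I would keep one point fixed and wind the other once around a single puncture. Concretely, let $\zeta_1$ be the constant path at $d_1$ and let $\zeta_2$ be a loop based at $d_2$ contained in a small disk $V\subset\Sigma$ that encircles exactly one puncture $p_i$ and avoids $d_1$. Since $\zeta_1(s)\neq\zeta_2(s)$ for every $s$, the pair $(\zeta_1,\zeta_2)$ is a loop in $\mathcal{F}$, hence $w(\alpha)=\phi(\zeta_1)+\phi(\zeta_2)=0+1=1$. For $u$, the unit vector $(d_1-\zeta_2(s))/|d_1-\zeta_2(s)|$ stays inside the short arc of directions from $d_1$ to points of $V$, so its square $\gamma_\alpha$ takes values in a proper arc of $S^1$ and has degree $0$. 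Thus $\rho(\alpha)=q$.

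For $\beta$, I would swap $d_1$ and $d_2$ by a half-rotation inside a small disk. Choose a disk $U\subset\Sigma$ containing both $d_1$ and $d_2$ but no puncture, let $c\in U$ be the midpoint of $d_1$ and $d_2$, and let $(\zeta_1,\zeta_2)$ rotate $d_1$ and $d_2$ around $c$ through angle $\pi$, so that $\zeta_1(1)=d_2$ and $\zeta_2(1)=d_1$. Then $\{\zeta_1,\zeta_2\}$ is a loop in $\mathcal{C}$ although $(\zeta_1,\zeta_2)$ is not a loop in $\mathcal{F}$, so by definition
\[
w(\beta)=\phi(\zeta_1\circ\zeta_2).
\]
The concatenation $\zeta_1\circ\zeta_2$ is a loop in $\Sigma$ entirely inside $U$, hence winds around no puncture and $w(\beta)=0$. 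On the other hand the vector $\zeta_1(s)-\zeta_2(s)$ rotates by $\pi$ as $s$ runs from $0$ to $1$, so $\gamma_\beta(s)=\bigl((\zeta_1(s)-\zeta_2(s))/|\zeta_1(s)-\zeta_2(s)|\bigr)^2$ rotates by $2\pi$, giving degree $u(\beta)=1$. Thus $\rho(\beta)=t$.

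There is no substantial obstacle here; the content lies entirely in the correct bookkeeping of the two winding invariants, and in particular in remembering that the squaring in the definition of $u$ is what converts the half-rotation into a full loop in $S^1$. Once $\alpha$ and $\beta$ are described, the generators $q$ and $t$ lie in the image of $\rho$, which completes the proof.
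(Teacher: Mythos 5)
Your proposal is correct and follows essentially the same route as the paper: a loop with one point fixed and the other winding once around a puncture realizes $q$, and a half-rotation of the two points inside a small punctureless disk realizes $t$. The paper's version of the second loop sends the two points halfway around the boundary circle of a small disk, which is the same construction as your rotation by $\pi$ about the midpoint.
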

\begin{proof}
It suffices to prove that both $q$ and $t$ are in the image of $\rho$.
Let us first prove that $q$ is in the image. Let $\zeta=\{\zeta_1,\zeta_2\}$ be the loop in $\mathcal{C}$ such that $\zeta_1$ is constant and $\zeta_2$ is a small counterclockwise loop around $p_1$. Then:
\[w(\zeta)=\phi(\zeta_1)+\phi(\zeta_2)=1.\]
Since $\zeta_2$ stays close to $p_1$ and $\zeta_1$ is constant, we conclude that $u(\zeta)=0$. Thus $\rho(\zeta)=q$.

Let us now prove that $t$ is in the image of $\rho$. Consider  a  small disk $B\subset \Sigma$ and define the loop $\varphi=\{\varphi_1,\varphi_2\}$ as follows. Fix two distinct points $x,y \in \partial(B)$. Set $\varphi_1$ the path that goes along $\partial B$ from $x$ to $y$ in counterclockwise direction and  $\varphi_2$ the path that goes along $\partial B$ from $y$ to $x$ in counterclockwise direction. Then:
\[w(\varphi)= \phi(\varphi_1 \circ \varphi_2)=0,\]
and \[u(\varphi)= \mathsf{deg}(\id)=1.\]
We conclude that $\rho(\varphi)=t.$ 
\end{proof}

Let $\tilde{\mathcal{C}} \rightarrow \mathcal{C}$ be the covering space that corresponds to the subgroup $\ker(\rho)$ of $\pi_1(C,\{d_1,d_2\})$. The group $\mathbb{Z} \oplus \mathbb{Z}$ acts as the group of deck transformations of $\tilde{\mathcal{C}}$. Therefore, the space
\[\mathcal{H} := H_2(\tilde{\mathcal{C}},\mathbb{Z}),\]
has the structure of a module over the ring:

\[ R:= \mathbb{Z}[q,q^{-1},t, t^{-1}].\]

We will now describe an action of the braid group $B_n$ on the $R$-module $\mathcal{H}$. We will again use the identification
\[B_n \cong \mathsf{Mod}(D,Q).\]

Take a homeomorphism $f$ of $D$ that fixes the boundary pointwise and permutes the elements of $Q$. The map $f$ induces a homeomorphism $\hat{f}: \mathcal{C}\rightarrow \mathcal{C}$ given by:
\[\hat{f}(\{x,y\}):= \{f(x),f(y)\}.\]

Since $f$ fixes the boundary pointwise, we know that $\hat{f}(\{d_1,d_2\})=\{d_1,d_2\}$ and therefore $\hat{f}$ induces an isomorphism $\hat{f}_*: \pi_1(\mathcal{C}, \{d_1,d_2\})\rightarrow \pi_1(\mathcal{C}, \{d_1,d_2\})$.

\begin{lemma}
The homomorphism $\hat{f}_*$ preserves the invariant $\rho$, i.e.
$\rho \circ \hat{f}_*= \rho.$
\end{lemma}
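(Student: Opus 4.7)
The plan is to split $\rho = q^{w}t^{u}$ and show separately that $w \circ \hat{f}_* = w$ and $u \circ \hat{f}_* = u$. Since $q$ and $t$ are independent generators of $\mathbb{Z}\oplus\mathbb{Z}$, this suffices, and it reduces the lemma to two geometric statements about a single invariant each.

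For $w$: let $\zeta = \{\zeta_1, \zeta_2\}$ be a loop in $\mathcal{C}$, so $\hat{f}_*(\zeta) = \{f\circ\zeta_1, f\circ\zeta_2\}$. Any homeomorphism of $D$ fixing $\partial D$ pointwise is orientation-preserving, so for any loop $\gamma$ in $\Sigma$ and any puncture $p_i \in Q$, the winding number of $f\circ\gamma$ around $f(p_i)$ equals that of $\gamma$ around $p_i$. Since $f$ permutes $Q$, summing over all punctures gives $\phi(f\circ\gamma) = \phi(\gamma)$. This is exactly what is needed in both cases of the definition of $w$: the loop case is immediate, and in the path-permuting case we use $(f\circ\zeta_1)\circ(f\circ\zeta_2) = f\circ(\zeta_1\circ\zeta_2)$.

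For $u$: observe that the invariant $u$ is induced on $\pi_1$ by the map
\[
\Theta: \mathcal{C} \longrightarrow S^1, \qquad \{x,y\} \longmapsto \left(\frac{x-y}{|x-y|}\right)^2.
\]
The key point is that the same formula defines an extension $\bar\Theta: C_2(D) \to S^1$, since well-definedness requires only $x \neq y$ and does not depend on $Q$. By the Alexander-Tietze theorem (Example \ref{AT}), there is an isotopy $f_t: D \to D$, $t \in [0,1]$, with $f_0 = \id$ and $f_1 = f$, through homeomorphisms fixing $\partial D$ pointwise. This induces maps $\hat{f}_t: \mathcal{C} \to C_2(D)$; although these do not in general land in $\mathcal{C}$ (because $f_t(Q)$ may differ from $Q$), the composition $\bar\Theta \circ \hat{f}_t: \mathcal{C} \to S^1$ is well-defined. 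Since $d_1, d_2 \in \partial D$ are fixed by every $f_t$, this is a basepoint-preserving homotopy from $\Theta$ to $\Theta\circ\hat{f}$, and passing to $\pi_1$ yields $u = u \circ \hat{f}_*$.

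The main obstacle is the second step: the naive strategy of comparing $\gamma_\zeta$ and $\gamma_{\hat{f}_*(\zeta)}$ pointwise fails because $f$ need not be linear and so the angle $\Theta$ is not preserved. The trick is to recognize that $\Theta$ admits an extension to the larger space $C_2(D)$, so that the Alexander-Tietze isotopy (which badly fails to preserve $\mathcal{C}$) can still be composed with $\bar\Theta$ to produce a genuine homotopy of $S^1$-valued maps on $\mathcal{C}$.
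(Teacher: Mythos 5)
Your proof is correct and follows essentially the same route as the paper: split $\rho$ into $w$ and $u$, handle $w$ by invariance of the total winding number, and handle $u$ by extending the angle invariant to $C_2(D)$ and invoking the Alexander--Tietze isotopy. The only (cosmetic) difference is that you realize the conclusion as a basepoint-preserving homotopy of $S^1$-valued maps $\bar\Theta\circ\hat{f}_t$, whereas the paper phrases it as triviality of the induced map $\overline{f}_*$ on $\pi_1(C_2(D))$ combined with the commuting triangle $\hat{u}\circ\iota = u$.
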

\begin{proof}
We need to prove that $w \circ \hat{f}_*= w$ and $u \circ \hat{f}_*= u$. 
The first statement reduces to the fact that the total winding number is preserved by $\hat{f}_*$, which we have already seen.
Let us now prove the second statement. Consider the inclusion of configuration spaces:
\[\iota: \mathcal{C} =C_2(\Sigma) \hookrightarrow C_2(D),\]
induced by the inclusion $ \Sigma \hookrightarrow D$. The definition of the invariant $u$ can be extended word by word to an invariant $\hat{u}$ of loops in $C_2(D)$ in such a way that the diagram:

\[ \xymatrix{
\pi_1(\mathcal{C},\{x,y\})\ar[r]^\iota \ar[dr]^u & \pi_1(C_2(D),\{x,y\}) \ar[d]^{\hat{u}}\\
&\mathbb{Z}
}
\]
commutes. On the other hand, $\hat{f}$ extends to a homeomorphism $\overline{f}$ of $C_2(D)$ which, by the 
Alexander-Tietze theorem proved in Example \ref{AT}, is isotopic to the identity. Thus $\overline{f}$ acts trivially on the fundamental group of $C_2(D)$. Then, for any element $\gamma \in \pi_1(\mathcal{C},\{x,y\})$:

\[ u \circ \hat{f}_*(\gamma)= \hat{u} \circ \iota \circ \hat{f}_* (\gamma)=\hat{u} \circ \overline{f}_*\circ  \iota \circ  \gamma =\hat{u} \circ \iota (\gamma)= u(\gamma).\]
\end{proof}

The previous lemma implies that the homeomorphism $\hat{f}$ lifts naturally to a homeomorphism $ \tilde{f}: \tilde{\mathcal{C}} \rightarrow \tilde{\mathcal{C}}$ that commutes with the deck transformations of the covering.
This homeomorphism induces a homomorphism in homology $\tilde{f}_*: H_2(\tilde{\mathcal{C}},\mathbb{Z})\rightarrow H_2(\tilde{\mathcal{C}},\mathbb{Z})$.

\begin{definition}
The Lawrence-Krammer-Bigelow representation of the braid group $B_n$ is the homomorphism:
\[B_n \rightarrow \Aut_R(H_2(\tilde{\mathcal{C}}, \mathbb{Z})),\]
given by the formula:
\[f \mapsto \tilde{f}_*.\]
\end{definition}

\begin{theorem}[Krammer \cite{Krammer}, Bigelow \cite{Bigelow}]
The following statements hold for all $n \geq 2$:
\begin{enumerate}
\item There is a natural isomorphism of $R$-modules:

\[\mathcal{H}\cong R^{\frac{n(n-1)}{2}}.\]
\item The Lawrence-Krammer-Bigelow representation $B_n \rightarrow \Aut_R(\mathcal{H})$ is faithful.
\end{enumerate}
\end{theorem}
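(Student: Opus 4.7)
The plan is to handle the two statements by rather different techniques, essentially following Bigelow's approach throughout.

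For statement (1), I would compute $\mathcal{H}=H_2(\tilde{\mathcal{C}},\mathbb{Z})$ by choosing a convenient CW structure on $\Sigma$: a boundary arc containing $d_1,d_2$, small loops around each puncture $p_i$, and a tree joining them. This induces a CW structure on the unordered configuration space $\mathcal{C}=C_2(\Sigma)$ whose top-dimensional cells are indexed by unordered pairs $\{i,j\}$ of distinct punctures, giving $\binom{n}{2}$ such cells. Lifting the structure to $\tilde{\mathcal{C}}$ produces a chain complex of free $R$-modules; a diagram chase identifies $H_2$ as a free $R$-module of rank $\binom{n}{2}$. More geometrically, for each pair $i<j$ one constructs an explicit 2-cycle $F_{ij}$ called a \emph{standard fork}: take an embedded arc from $d_1$ into the interior of $\Sigma$ ending on a short segment joining $p_i$ to $p_j$; the associated configuration subvariety lifts to a 2-cycle in $\tilde{\mathcal{C}}$ whose class is a basis element.

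For statement (2), I would use Bigelow's noodle--fork pairing. A \emph{noodle} is an embedded arc $N\subset\Sigma$ joining $d_1$ to $d_2$ and avoiding the punctures; it determines a class in Borel--Moore homology $H_2^{\mathrm{BM}}(\tilde{\mathcal{C}})$. Geometric intersection in the cover defines a sesquilinear pairing
\[
\langle \cdot,\cdot\rangle : \mathcal{H}\otimes_R H_2^{\mathrm{BM}}(\tilde{\mathcal{C}}) \longrightarrow R,
\]
and on a fork $F_{ij}$ and noodle $N$ it is computable explicitly as a sum $\sum_k \varepsilon_k\, q^{a_k} t^{b_k}$, taken over intersection points of the tine of the fork with $N$, with exponents read off from the covering data $\rho$. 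To prove faithfulness, suppose $\beta\in B_n$ acts trivially on $\mathcal{H}$; then for every pair $i<j$ and every noodle $N$, one has $\langle F_{ij}, N\rangle = \langle F_{ij}, \beta^{-1}(N)\rangle$. Bigelow's \emph{key lemma} then asserts that this coincidence of pairings for all noodles forces $\beta(N)$ to be isotopic to $N$ rel endpoints for every noodle $N$; since noodles form a filling family of arcs on $\Sigma$, this implies $\beta=1$ in $\mathsf{Mod}(D,Q)\cong B_n$.

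The main obstacle is the key lemma, which is the genuinely hard part. The strategy is to put $\beta(N)$ and $N$ in minimal position (via the bigon criterion), group intersection points according to their $(q,t)$-weights in the pairing, and argue that the vanishing of the total signed contribution forces the existence of an innermost bigon that can be eliminated by an ambient isotopy; iteration then drives the geometric intersection number to zero, leaving $\beta(N)$ isotopic to $N$. This combinatorial-geometric cancellation argument is the heart of Bigelow's paper and cannot be shortened significantly. Once the key lemma is in hand, both statements of the theorem follow quickly, part (1) giving the module structure in which part (2) is formulated.
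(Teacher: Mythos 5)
The paper does not actually prove this theorem --- it defers entirely to Kassel--Turaev, Theorem 3.15, whose proof is precisely the Bigelow fork--noodle argument you outline. So your proposal is on the same track as the cited source, and the overall architecture (a cell/fork computation of $\mathcal{H}$ for part (1), the noodle--fork pairing plus a non-degeneracy lemma for part (2)) is correct. But it is an outline, not a proof: the entire mathematical content of faithfulness sits inside the ``key lemma,'' which you state and defer. That is an honest deferral, but it means the proposal proves no more than the paper does.

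Two places where the sketch glosses over points that the careful write-ups spend real effort on. First, the surface associated to a fork is parametrized by pairs of points on the two halves of the tine, and since the tine ends at the punctures $p_i,p_j$ this surface is \emph{not} a compact cycle in $\tilde{\mathcal{C}}$; one must either pass to a suitable relative or locally finite homology, or verify that the relevant inclusion induces an isomorphism on $H_2$. This is exactly why Kassel--Turaev set up the homology of the covering so carefully, and it also affects where the noodle class lives and why the pairing is well defined. Second, the logic of the faithfulness step is slightly off as you state it: the key lemma is that $\langle N,F\rangle=0$ if and only if the tine of $F$ can be isotoped off $N$ (proved by putting the arcs in minimal position and showing the monomials of extremal $(q,t)$-degree all carry the same sign, so no cancellation can occur --- not by producing a bigon from the vanishing of the sum, which is the contrapositive of the bigon criterion already used to reach minimal position). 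Faithfulness then follows because triviality of the action forces $\beta$ to preserve the isotopy classes of a filling family of tine arcs, whence $\beta=1$ by the Alexander method; it is not that ``$\beta(N)$ is isotopic to $N$ for every noodle'' directly. These corrections do not change the strategy, but they are where the actual work is.
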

\begin{proof}
The interested reader may consult \cite{KT} Theorem 3.15.
\end{proof}

The second statement of the theorem implies that $B_n$ is a linear group, i.e. it is isomorphic to a group of matrices with real coefficients. This can be obtained by setting $q,t$ to be algebraically independent real numbers.   

\section{The Knizhnik-Zamolodchikov connection}\label{section4}
We will now describe representations of braid groups that arise by monodromy of certain flat connections, known as Knizhnik-Zamolodchivov connections,  on configuration spaces of points in the plane. More about this connections and the  relationship with the Jones polynomial can be found in Kohno's book \cite{Kohno}.
\subsection{Cohomology of configuration spaces and the KZ connection}

As before, we denote by $\hat{C}_n(\mathbb{C})$ the configuration space of $n$ ordered points in the plane.
The cohomology of $\hat{C}_n(\mathbb{C})$ has been computed explicitly by Arnold in \cite{Arnold}. Since the original article is in russian, some readers may prefer to read \cite{Sinha} for a detailed explanation of the computation.
Let us now describe Arnold's result.
\begin{definition}
The algebra $\mathsf{A}_n$ is the graded commutative algebra over $\mathbb{C}$ generated by degree one 
elements $a_{ij}=a_{ji}$ for $i\neq j, 1 \leq i ,j \leq n$ modulo the Arnold relation:

\[a_{ij} a_{jk}+ a_{ki}a_{ij}+ a_{jk} a_{ki}=0, \textit{ for } i<j<k.\]
\end{definition}
 We also define the differential forms $w_{ij}$ for $1 \leq i,j \leq n$  in $\hat{C}_n(\mathbb{C})$ given by:
 \[w_{ij}=d (\log(z_i -z_j))= \frac{dz_i -dz_j}{z_i-z_j}.\]
 
 \begin{theorem}[Arnold]
There is a homomorphism of differential graded algebras $ A_n \rightarrow \Omega(\hat{C}_n(\mathbb{C}))$ given by:
\[a_{ij} \mapsto w_{ij}.\]
Moreover, this homomorphism induces an isomorphism in cohomology
\[A_n \cong H(\hat{C}_n(\mathbb{C}), \mathbb{C}).\]
 \end{theorem}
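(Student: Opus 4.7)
The plan is to first verify that $a_{ij}\mapsto w_{ij}$ extends to a homomorphism of differential graded algebras, and then to establish that the induced map in cohomology is an isomorphism by induction on $n$, using the Fadell-Neuwirth fibration to reduce the inductive step to a Leray-Hirsch computation on a punctured plane.

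For well-definedness at the DGA level, each $w_{ij}$ is already closed (it is locally $d\log(z_i-z_j)$) and has odd degree, so the only nontrivial relation to check is the Arnold identity at the level of forms,
\[w_{ij}\wedge w_{jk}+w_{ki}\wedge w_{ij}+w_{jk}\wedge w_{ki}=0.\]
Expanding the wedge products and collecting coefficients of $dz_i\wedge dz_j$, $dz_j\wedge dz_k$ and $dz_k\wedge dz_i$, this reduces to the partial fraction identity
\[\frac{1}{(z_i-z_j)(z_j-z_k)}+\frac{1}{(z_k-z_i)(z_i-z_j)}+\frac{1}{(z_j-z_k)(z_k-z_i)}=0.\]

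For the cohomology isomorphism, the base case $n=1$ is trivial since $\hat{C}_1(\mathbb{C})=\mathbb{C}$ is contractible and $A_1=\mathbb{C}$. For the inductive step I would use the Fadell-Neuwirth projection $\pi:\hat{C}_n(\mathbb{C})\rightarrow \hat{C}_{n-1}(\mathbb{C})$ forgetting the last coordinate, a locally trivial fibration whose fiber $F=\mathbb{C}\setminus\{z_1,\ldots,z_{n-1}\}$ is homotopy equivalent to a wedge of $n-1$ circles. The globally defined forms $w_{1n},\ldots,w_{n-1,n}$ restrict on each fiber to a basis of $H^1(F;\mathbb{C})$, so by the Leray-Hirsch theorem
\[H^*(\hat{C}_n(\mathbb{C}))\cong H^*(\hat{C}_{n-1}(\mathbb{C}))\otimes H^*(F;\mathbb{C})\]
as a free module of rank $n$ over $H^*(\hat{C}_{n-1}(\mathbb{C}))$. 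The parallel algebraic statement is that $A_n$ is generated as a left $A_{n-1}$-module by $\{1,a_{1n},\ldots,a_{n-1,n}\}$. Indeed, applying the Arnold relation to the triple $(i,j,n)$ with $i<j<n$ and using graded commutativity together with $a_{ni}=a_{in}$ yields $a_{in}a_{jn}=a_{ij}a_{jn}-a_{ij}a_{in}$, which allows any monomial to be rewritten with at most one factor of the form $a_{\bullet n}$. Combined with the inductive hypothesis $A_{n-1}\cong H^*(\hat{C}_{n-1}(\mathbb{C}))$, this produces a surjection of the expected $A_{n-1}$-module $A_n$ onto $H^*(\hat{C}_n(\mathbb{C}))$.

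The main obstacle is ruling out hidden relations in $A_n$, i.e.\ establishing freeness rather than just generation over $A_{n-1}$. Here the geometric side supplies the missing input: Leray-Hirsch gives an exact match of ranks in each degree between the obvious spanning set on the algebraic side and $H^*(\hat{C}_n(\mathbb{C}))$, so the surjection above must be an isomorphism for dimension reasons. This forces the straightening procedure from the Arnold relations to produce a basis with no further relations, closing the induction.
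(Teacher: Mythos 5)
Your proposal is correct, and it actually supplies a proof where the paper gives essentially none: the paper disposes of the homomorphism claim as ``an explicit computation'' and refers the reader to Sinha's article for the isomorphism. What you have written is precisely the standard argument that those references carry out, so the comparison is less ``different route'' than ``the route the paper points to, made explicit.'' Both halves of your argument check out. For the DGA part, the three wedge products $w_{ij}\wedge w_{jk}$, $w_{ki}\wedge w_{ij}$, $w_{jk}\wedge w_{ki}$ all have the same $2$-form factor $dz_i\wedge dz_j+dz_j\wedge dz_k+dz_k\wedge dz_i$, and the scalar coefficients sum to zero by exactly the partial-fraction identity you state (the numerator over the common denominator is $(z_k-z_i)+(z_j-z_k)+(z_i-z_j)=0$). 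For the inductive step, the classes $w_{in}$ restrict on the fiber over $(z_1,\dots,z_{n-1})$ to $-dz_n/(z_n-z_i)$, which are dual (up to a uniform factor of $2\pi i$) to small loops around the punctures, so Leray--Hirsch applies, and your rewriting $a_{in}a_{jn}=a_{ij}a_{jn}-a_{ij}a_{in}$ is the correct consequence of the Arnold relation for $(i,j,n)$ together with graded commutativity. The one place I would tighten the wording is the final ``for dimension reasons'' step: the clean formulation is that the $A_{n-1}$-module map $A_{n-1}\otimes\langle 1,a_{1n},\dots,a_{n-1,n}\rangle\to A_n$ is surjective by straightening, while its composite with the Arnold map $A_n\to H^*(\hat{C}_n(\mathbb{C}))$ is an isomorphism by Leray--Hirsch and the inductive hypothesis (using that $w_{ij}$ for $i,j<n$ is pulled back along the Fadell--Neuwirth projection, so the Arnold map is a map of $A_{n-1}$-modules); a surjection followed by a map whose composite is bijective forces both maps to be isomorphisms degreewise, each graded piece being finite dimensional. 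This simultaneously proves freeness of $A_n$ over $A_{n-1}$ and the cohomology isomorphism, closing the induction exactly as you intend.
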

\begin{proof}
The fact that the formula above gives a homomorphism is an explicit computation with the differential forms $w_{ij}$. For the proof that the map is an isomorphism we recommend \cite{Sinha}.
\end{proof}

Arnold's computation can be used to define natural flat connections on configuration spaces.
\begin{definition}
For each $n \geq 2$, the Kohno-Drinfeld Lie algebra is the Lie algebra $\mathfrak{t_n}$ generated by the symbols
$t_{ij}=t_{ji}$  for $1\leq i, j \leq n$ modulo the relations:

\[
[t_{ij}, t_{kl}]= 0 \textit{ if } \#\{i,j,k,l\}=4,\]

\[[t_{ij}, t_{ik}+t_{jk}]=0  \textit{ if } \# \{i,j,k\}=3.\]

\end{definition}

\begin{definition}
The Knizhnik-Zamolodchikov connection is the connection on the configuration space $\hat{C}_n(\mathbb{C})$ with values in the Kohno-Drinfeld Lie algebra, given by the formula:
\[\theta_n:= \sum_{i<j} t_{ij}w_{ij}. \]
\end{definition}

\begin{lemma}
The  Knizhnik-Zamolodchikov connection $\theta_n$ is flat, i.e.
\[d\theta_n+ \frac{1}{2} [\theta_n,\theta_n]=0.\]
\end{lemma}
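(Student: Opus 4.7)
The plan is to verify the Maurer-Cartan equation by treating the two summands separately and organizing the cross terms of the bracket by how many indices the pairs $(i,j)$ and $(k,l)$ share.

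First I would dispose of $d\theta_n$. Each $w_{ij}=d\log(z_i-z_j)$ is an exact holomorphic $1$-form on $\hat C_n(\mathbb{C})$, hence closed. Since the $t_{ij}$ are constants in the Kohno-Drinfeld Lie algebra, $d\theta_n=\sum_{i<j} t_{ij}\, dw_{ij}=0$, so the whole problem reduces to showing $[\theta_n,\theta_n]=0$.

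Next I would expand
\[
\tfrac{1}{2}[\theta_n,\theta_n]=\sum_{i<j,\,k<l}[t_{ij},t_{kl}]\,w_{ij}\wedge w_{kl}
\]
and classify the index pairs by $|\{i,j\}\cap\{k,l\}|$. Terms with $\{i,j\}=\{k,l\}$ vanish because $w_{ij}\wedge w_{ij}=0$. Terms with four distinct indices vanish by the first Kohno-Drinfeld relation $[t_{ij},t_{kl}]=0$. The only surviving contribution is thus the sum over triples $\{i,j,k\}$ with $i<j<k$ of the three cross terms coming from the pairs $(i,j),(i,k),(j,k)$. After collecting signs from the antisymmetry of the wedge and the bracket, the triple-contribution can be written as
\[
[t_{ij},t_{ik}]\,w_{ij}\wedge w_{ik}+[t_{ij},t_{jk}]\,w_{ij}\wedge w_{jk}+[t_{ik},t_{jk}]\,w_{ik}\wedge w_{jk}.
\]

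The heart of the argument is showing that this triple expression is zero. For this I would use Arnold's identity $w_{ij}\wedge w_{jk}+w_{jk}\wedge w_{ki}+w_{ki}\wedge w_{ij}=0$ in $\Omega^2(\hat C_n(\mathbb{C}))$, which is an immediate consequence of the defining relation in $\mathsf A_n$ together with the theorem of Arnold quoted above. Solving for $w_{ij}\wedge w_{jk}$ and substituting yields
\[
\bigl([t_{ij},t_{ik}]+[t_{ij},t_{jk}]\bigr)\,w_{ij}\wedge w_{ik}+\bigl([t_{ij},t_{jk}]+[t_{ik},t_{jk}]\bigr)\,w_{ik}\wedge w_{jk},
\]
and each coefficient collapses via the second Kohno-Drinfeld relation: the first is $[t_{ij},t_{ik}+t_{jk}]=0$ and the second is $[t_{ij}+t_{ik},t_{jk}]=0$.

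The main obstacle is purely bookkeeping: being careful with the antisymmetries when passing between ordered pairs in the sum and the three unordered pairs attached to a triple, and then matching the Arnold relation for $1$-forms with the infinitesimal braid relations in exactly the combination that produces the brackets $[t_{ij},t_{ik}+t_{jk}]$. Once that combinatorics is set up, no analytic input beyond $dw_{ij}=0$ and the two families of Kohno-Drinfeld relations is required.
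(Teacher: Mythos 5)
Your proof is correct and follows essentially the same route as the paper: reduce to $[\theta_n,\theta_n]=0$ via closedness of the $w_{ij}$, kill the four-distinct-index terms with the first Kohno--Drinfeld relation, and dispose of each triple $i<j<k$ using the Arnold relation together with $[t_{ij},t_{ik}+t_{jk}]=0$. The only cosmetic difference is the order of the last two steps: the paper first uses the infinitesimal braid relations to collect the six terms of a triple into $2[t_{ij},t_{jk}]$ times the Arnold combination and then invokes the Arnold relation, whereas you substitute the Arnold relation first and then apply the braid relations twice.
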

\begin{proof}
Since each of the forms $w_{ij}=d(\log(z_i-z_j))$ is closed, it is enough to prove that:
\[ [\theta_n,\theta_n]=0.\]
For this we use the Arnold relations as follows:
\[[\theta_n,\theta_n]=\sum_{i<j, k<l} [t_{ij} \omega_{ij}, t_{kl} w_{kl}]=\sum_{i<j, k<l}[t_{ij},t_{kl}] w_{ij}w_{kl}.\]
In view of the first relation in the Drinfeld-Kohno Lie algebra, it suffices to sum over sets of indices so that
$\#\{i,j,k,l\}=3$. Thus, the expression above is equal to:
\[\sum_{i<j, k<l, \\\#\{i,j,k,l\}=3}[t_{ij},t_{kl}] w_{ij}w_{kl}.\]
We now rewrite this expression by considering all the possible ways in which a pair of indices can be equal, and obtain:
\begin{eqnarray*}
\sum_{i<j<k} [t_{ij},t_{ik}]w_{ij}w_{ik}+[t_{ik},t_{ij}]w_{ik}w_{ij}+[t_{jk},t_{ij}]w_{jk}w_{ij}+\\
+[t_{ij},t_{jk}]w_{ij}w_{jk}+[t_{ik},t_{jk}]w_{ik}w_{jk}+[t_{jk},t_{ik}]w_{jk}w_{ik}.
\end{eqnarray*}

Using now the second relation in the Drinfeld-Kohno Lie algebra, we can rewrite the expression as:

\[2 \sum_{i<j<k}[t_{ij},t_{jk}](w_{ij}w_{jk}+ w_{ki}w_{ij}+ w_{jk}w_{ki}),\]
which vanishes in view of the Arnold relation.

\end{proof}

The KZ connection is a flat connection on the configuration space with values in the Drinfeld-Kohno Lie algebra.
We will see how this connection can be used to construct flat connections on vector bundles on configurations spaces.
Recall that given a finite dimensional complex Lie algebra $\frak{g}$ the Killing form is the symmetric bilinear form $\kappa$ on $\frak{g}$ defined by:
\[\kappa(x,y):= \mathsf{tr}(\mathsf{ad}(x)\circ \mathsf{ad}(y)),\]

where $\mathsf{ad}$ denotes the adjoint representation of $\frak{g}$. The Lie algebra $\frak{g}$ is called semisimple if
the Killing form $\kappa$ is nondegenerate.
We denote by $\U$ the universal enveloping algebra of $\g$. If $\g$ is semisimple then the killing form defines an 
isomorphism
\[\kappa^{\sharp}:\g \rightarrow \g^* ,\]
which in turn induces identifications:

\[\g \otimes \g \cong \g \otimes \g^* \cong \End(\g).\]

We will denote by $\Omega$ the element of $\g \otimes \g$ which corresponds to $\id \in \End(\g)$ under the identification above. Explicitly, one can choose an orthonormal basis $\{I_\mu\}$ for $\g$ with respect to the Killing form and then:
\[\Omega= \sum_\mu I_\mu \otimes I_\mu.\]

The Casimir element of $\g$, denoted by $C$, is the image of $\Omega\in \g \otimes \g $ in the universal enveloping algebra.  Since the Killing form is $\mathsf{ad}$ invariant i.e. $\kappa(\mathsf{ad}(x)(y),z)+ \kappa(y, \mathsf{ad}(x)(z))=0$, the map $\kappa^\sharp: \g \rightarrow \g^*$ is a morphism of representations of $\g$. Since $\id \in \End(\g)$ is an invariant element for the action of $\g$, so is $\Omega$. We conclude that $C$ is a central element of $\U$.

Recall that $\U$ admits a coproduct:
\[ \Delta: \U \rightarrow \U \otimes \U,\]
which is the unique algebra homomorphism with the property that:
\[\Delta(x)= 1 \otimes x + x \otimes 1,\] 
for all $x\in \g$. 
\begin{lemma}\label{lemmaomega}
We regard $\g$ as a subspace of $\U$ via the obvious inclusion. Then:
\[\Omega=\frac{1}{2}(\Delta(C)-1 \otimes C - C\otimes 1).\]
\end{lemma}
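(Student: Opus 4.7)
The plan is a direct computation using the defining properties of the coproduct $\Delta$ and of $\Omega$. Fix an orthonormal basis $\{I_\mu\}$ of $\g$ with respect to the Killing form, so that $\Omega = \sum_\mu I_\mu \otimes I_\mu$ and $C = \sum_\mu I_\mu I_\mu$ as an element of $\U$ (via the inclusion $\g \subset \U$).

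First I would apply $\Delta$ to $C$ using the fact that $\Delta$ is an algebra homomorphism:
\[
\Delta(C) = \sum_\mu \Delta(I_\mu)\,\Delta(I_\mu) = \sum_\mu (1 \otimes I_\mu + I_\mu \otimes 1)^2,
\]
where I have used $\Delta(x) = 1 \otimes x + x \otimes 1$ for $x \in \g$. Next I would expand the square inside $\U \otimes \U$. The key observation is that the two summands $1 \otimes I_\mu$ and $I_\mu \otimes 1$ commute with each other in the tensor product algebra, so the square expands as $1 \otimes I_\mu^2 + 2\, I_\mu \otimes I_\mu + I_\mu^2 \otimes 1$.

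Summing over $\mu$ then yields
\[
\Delta(C) = 1 \otimes C + 2\,\Omega + C \otimes 1,
\]
and rearranging gives the stated formula. There is no real obstacle here: the whole computation is a one-line verification once one writes out $\Delta(I_\mu)^2$ and uses that the two tensor factors commute; the only thing to be careful about is distinguishing the product in $\U$ from the tensor structure, which is precisely why the factor $\tfrac{1}{2}$ appears.
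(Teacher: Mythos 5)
Your computation is correct and is exactly the paper's own proof: apply $\Delta$ to $C=\sum_\mu I_\mu I_\mu$ using that $\Delta$ is an algebra homomorphism, expand $(1\otimes I_\mu + I_\mu\otimes 1)^2$ noting the two summands commute, and collect the cross terms into $2\Omega$. Nothing to add.
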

\begin{proof}
This is a direct computation:
\begin{eqnarray*}
\Delta(C)=\Delta(\sum_\mu I_\mu  I_\mu)=\sum_\mu \Delta(I_\mu )\Delta(I_\mu)&=& \sum_\mu (1 \otimes I_\mu+ I_\mu \otimes 1)(1 \otimes I_\mu+ I_\mu \otimes 1)\\
&=&1 \otimes C +C\otimes 1 +2 \Omega.
\end{eqnarray*}
\end{proof}

Let $ \iota^{12}: \U \otimes \U \rightarrow \U \otimes \U \otimes \U$ be the map:
\[x\otimes y \mapsto x \otimes y \otimes 1,\]
and define $\iota^{23}, \iota^{13}$ analogously. Then, for $1\leq i <j \leq3$ we set:
\[ \Omega^{ij}:= \iota^{ij} (\Omega).\]

\begin{lemma}\label{lemmadrinfeld}
The following relation is satisfied:
\[[\Omega^{12},\Omega^{23}+ \Omega^{13}]=0.\]
\end{lemma}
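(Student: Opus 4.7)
The plan is to reduce the commutator to a single tractable piece by rewriting $\Omega$ in terms of the Casimir $C$, and then exploit the centrality of $C$ in $\U$.

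First I would use that $\Omega = \sum_\mu I_\mu \otimes I_\mu$ together with $\Delta(I_\mu) = 1 \otimes I_\mu + I_\mu \otimes 1$ to observe
\[\Omega^{13} + \Omega^{23} = (\Delta \otimes \id)(\Omega).\]
Applying $\Delta \otimes \id$ to the identity $2\Omega = \Delta(C) - 1 \otimes C - C \otimes 1$ of Lemma \ref{lemmaomega} then gives
\[2(\Omega^{13} + \Omega^{23}) = (\Delta \otimes \id)\Delta(C) - 1 \otimes 1 \otimes C - \Delta(C) \otimes 1,\]
while the same lemma directly yields
\[2\Omega^{12} = \Delta(C) \otimes 1 - 1 \otimes C \otimes 1 - C \otimes 1 \otimes 1.\]

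Next I would expand $[2\Omega^{12}, 2(\Omega^{13}+\Omega^{23})]$ bilinearly into nine commutators. Every term in which $C$ sits in a single tensor slot, such as $C \otimes 1 \otimes 1$, $1 \otimes C \otimes 1$, or $1 \otimes 1 \otimes C$, is central in $\U \otimes \U \otimes \U$ because $C$ is central in $\U$, so any commutator involving one of those factors vanishes. The self-commutator $[\Delta(C) \otimes 1, \Delta(C) \otimes 1]$ is trivially zero. The only surviving term is $[\Delta(C) \otimes 1,\, (\Delta \otimes \id)\Delta(C)]$.

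For this last piece I would write $\Delta(C) = \sum C_{(1)} \otimes C_{(2)}$ in Sweedler notation, so that $(\Delta \otimes \id)\Delta(C) = \sum \Delta(C_{(1)}) \otimes C_{(2)}$, and compute
\[\bigl[\Delta(C) \otimes 1,\ \textstyle\sum \Delta(C_{(1)}) \otimes C_{(2)}\bigr] = \sum \Delta\bigl([C, C_{(1)}]\bigr) \otimes C_{(2)} = 0,\]
using that $\Delta$ is an algebra homomorphism in the first equality and that $C$ is central in $\U$ in the second. The main conceptual point, and the only place where a subtlety could arise, is that $\Delta(C)$ is \emph{not} itself central in $\U \otimes \U$, so one cannot simply dismiss this commutator by centrality; the rescue is that $\Delta(C)$ does commute with everything in the image of $\Delta$, which is exactly what is needed.
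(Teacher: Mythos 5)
Your proof is correct and follows essentially the same route as the paper: both reduce everything to the Casimir via Lemma \ref{lemmaomega}, discard the terms with $C$ in a single tensor slot by centrality, and kill the one surviving commutator using that $\Delta$ is an algebra homomorphism so $\Delta(C)$ commutes with the image of $\Delta$. The only cosmetic difference is that you keep $(\Delta\otimes\id)\Delta(C)$ in Sweedler notation where the paper expands $\Omega^{13}+\Omega^{23}$ as $2\sum_\mu \Delta(I_\mu)\otimes I_\mu$ plus a central term; the mechanism is identical.
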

\begin{proof}
First, we observe that since $C$ is a central element in $\U$, $1 \otimes 1 \otimes C, 1 \otimes C \otimes 1, C \otimes 1 \otimes 1$ are central elements in $\U \otimes \U \otimes \U$. In view of Lemma \ref{lemmaomega}, we know that for each pair $1 \leq i <j \leq 3$:
\[ \Omega^{ij}=\frac{1}{2}\iota^{ij}(\Delta(C))+X^{ij},\]
where $X^{ij}$ is central. Therefore it suffices to prove that:
\[[\iota^{12}(\Delta(C)),\iota^{23}(\Delta(C))+ \iota^{13}(\Delta(C))]=0.\]
In order to prove this we compute:
\[\iota^{23}(\Delta(C))=\iota^{23}(1 \otimes C + C \otimes 1 +2 \sum_\mu I_\mu \otimes I_\mu)=1\otimes 1 \otimes C +1\otimes C \otimes 1 +2 \sum_\mu1\otimes I_\mu \otimes I_\mu).\]
Similarly:
\[\iota^{13}(\Delta(C))=\iota^{13}(1 \otimes C + C \otimes 1 +2 \sum_\mu I_\mu \otimes I_\mu)=1\otimes 1 \otimes C + C\otimes 1 \otimes 1 +2 \sum_\mu I_\mu \otimes 1\otimes I_\mu).\]
Therefore:
\begin{eqnarray*}
\iota^{13}(\Delta(C))+\iota^{23}(\Delta(C))&=& 2  \sum_\mu \Delta(I_\mu)\otimes I_\mu + X,
\end{eqnarray*}
where $X$ is central.
Finally we compute:
\[\frac{1}{2}[\iota^{12}(\Delta(C)),\iota^{23}(\Delta(C))+ \iota^{13}(\Delta(C))]=[\Delta(C)\otimes 1,\sum_\mu \Delta(I_\mu)\otimes I_\mu]=\sum_\mu [\Delta(C), \Delta (I_\mu )]\otimes I_\mu = 0.\]
\end{proof}

The previous lemma will be the key to constructing an action of the pure braid group $P_n$ on tensor products of representations of a complex semisimple Lie algebra. 
Given a representation 
$\rho: \g \rightarrow \End(V)$ of a Lie algebra $\g$, 
we will also denote by $\rho$ the corresponding homomorphism of associative algebras $\rho: \U \rightarrow \End(V)$. 

\begin{lemma}\label{lemmakey}
Let $\frak{g}$ be a finite dimensional complex semisimple Lie algebra and $\rho_1: \g \rightarrow \End(V_1), \dots,\rho_n: \g \rightarrow  \End(V_n)$ be representations of $\frak{g}$. Then there is a homomorphism of Lie algebras \[\Upsilon_n:\frak{t}_n \rightarrow \End(V_1 \otimes \dots \otimes V_n)\] given by the formula:
\[t_{ij}\mapsto (\rho_1\otimes \dots \otimes \rho_n)\circ \lambda^{ij}(\Omega)\in \End(V_1)\otimes \dots \otimes \End(V_n)\subset \End(V_1 \otimes \cdots \otimes V_n), \]
where $ \lambda^{ij}: \U \otimes \U \rightarrow \U^{\otimes n}$ is the morphism of algebras given by:
\[x \otimes y \mapsto 1 \otimes \dots \otimes 1 \otimes \underbrace{x}_i \otimes 1 \otimes \dots \otimes 1 \otimes \underbrace{ y}_j \otimes 1 \otimes \dots \otimes 1.\]
\end{lemma}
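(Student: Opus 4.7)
The goal is to check that the defining relations of $\mathfrak{t}_n$ are respected by the proposed assignment. Since $\rho_1 \otimes \dots \otimes \rho_n$ is an algebra homomorphism, it suffices to verify the relations inside $\U^{\otimes n}$ for the elements $\lambda^{ij}(\Omega)$. There are two relations to check, and each corresponds to a fundamentally different situation.

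For the first relation, suppose $\#\{i,j,k,l\}=4$. Then $\lambda^{ij}(\Omega)$ is supported on tensor factors $i$ and $j$, while $\lambda^{kl}(\Omega)$ is supported on the disjoint factors $k$ and $l$. Writing $\Omega = \sum_\mu I_\mu \otimes I_\mu$, the two elements can be rewritten as tensor products over $\U^{\otimes n}$ whose supports are disjoint, so they commute termwise, and the bracket vanishes.

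For the second relation, fix a triple $i<j<k$ (the other orderings reduce to this case by the symmetry $t_{ij}=t_{ji}$). The plan is to pull the identity out of $\U^{\otimes 3}$ and into $\U^{\otimes n}$ via the obvious embedding $\mu: \U^{\otimes 3} \hookrightarrow \U^{\otimes n}$ placing the three factors in slots $i, j, k$. Under $\mu$, the elements $\Omega^{12}, \Omega^{23}, \Omega^{13}$ of $\U^{\otimes 3}$ map precisely to $\lambda^{ij}(\Omega), \lambda^{jk}(\Omega), \lambda^{ik}(\Omega)$ respectively. Because $\mu$ is an algebra homomorphism, Lemma \ref{lemmadrinfeld} yields
\[
[\lambda^{ij}(\Omega), \lambda^{ik}(\Omega) + \lambda^{jk}(\Omega)] = \mu([\Omega^{12}, \Omega^{13}+\Omega^{23}]) = 0,
\]
which is exactly the second relation after applying $\rho_1 \otimes \dots \otimes \rho_n$.

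The main obstacle, which is essentially dispensed with by Lemma \ref{lemmadrinfeld}, is the "three strand" braid relation; the first relation is nearly tautological. A minor bookkeeping point to handle cleanly is the reduction of the general triple $\{i,j,k\}$ to the ordered case $i<j<k$: one must check that the various permutations of indices appearing in $[t_{ij}, t_{ik}+t_{jk}]=0$ all follow from a single instance via the symmetry $t_{ab}=t_{ba}$, equivalently $\lambda^{ab}(\Omega)=\lambda^{ba}(\Omega)$, which holds because $\Omega$ is symmetric in its two tensor factors (the basis $\{I_\mu\}$ is orthonormal, so $\Omega=\sum_\mu I_\mu \otimes I_\mu$ is manifestly invariant under the flip). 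Once this symmetry is noted, the verification reduces entirely to the two computations above.
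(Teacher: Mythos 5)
Your proof is correct and follows essentially the same route as the paper's: the relation for $\#\{i,j,k,l\}=4$ is immediate from disjointness of supports, and the three-index relation is reduced, via the algebra embedding into the slots $i,j,k$ and the homomorphism property of $\rho_1\otimes\dots\otimes\rho_n$, to the identity $[\Omega^{12},\Omega^{13}+\Omega^{23}]=0$ of Lemma \ref{lemmadrinfeld}. You are in fact slightly more careful than the paper in spelling out the reduction to $n=3$ and in noting that the flip-symmetry of $\Omega$ handles the various orderings of the triple.
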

\begin{proof}
We need to prove that the endomorphisms $\Upsilon(t_{ij})$ satisfy the Khono-Drinfeld relations. It is clear from the definition that 
\[[\Upsilon(t_{ij}), \Upsilon(t_{kl})]=0,\]
if $ \#\{i,j,k,l\}=4$. It remains to prove that: 
\[ [\Upsilon(t_{ij}), \Upsilon(t_{ik})+ \Upsilon(t_{ik})]=0,\]
if $\#\{ i,j,k\}=3$. Clearly, it is enough to consider the case $n=3$. Since $\rho_1\otimes \dots \otimes \rho_n$ is a morphism of algebras, it suffices to prove that:
\[[\Omega^{12},\Omega^{23}+ \Omega^{13}]=0,\]
which is precisely the claim of  Lemma \ref{lemmadrinfeld}.
\end{proof}

Lemma \ref{lemmakey} together with the flatness of the KZ connection imply the following result:

\begin{theorem}\label{theorempurebraids}
Let $\frak{g}$ be a finite dimensional complex semisimple Lie algebra and $\rho_1: \g \rightarrow \End(V_1), \dots,\rho_n: \g \rightarrow  \End(V_n)$ be representations of $\frak{g}$. The vector space $(V_1 \otimes \cdots \otimes V_n)$ has the structure of a representation of the  pure braid group $P_n$, given by holonomy of the connection 
$\Upsilon_n (\theta_n)$ on the trivial vector bundle over $\hat{C}_n(\mathbb{C})$ with fiber $(V_1 \otimes \cdots \otimes V_n)$.
\end{theorem}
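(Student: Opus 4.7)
The plan is to push the Kohno--Drinfeld valued KZ connection forward along $\Upsilon_n$ to obtain a flat connection with values in $\End(V_1 \otimes \cdots \otimes V_n)$, and then invoke the standard fact that a flat connection on a trivial bundle produces a holonomy representation of the fundamental group of the base.

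First I would consider the $\End(V_1 \otimes \cdots \otimes V_n)$-valued 1-form
\[
\Upsilon_n(\theta_n) \;=\; \sum_{i<j} \Upsilon_n(t_{ij})\, w_{ij}
\]
on $\hat{C}_n(\mathbb{C})$, viewed as a connection 1-form on the trivial bundle $\hat{C}_n(\mathbb{C}) \times (V_1 \otimes \cdots \otimes V_n)$. The key observation is that flatness is functorial under Lie algebra homomorphisms: if $\Upsilon\colon \mathfrak{h}\to \mathfrak{h}'$ is a morphism of Lie algebras and $\alpha$ is an $\mathfrak{h}$-valued 1-form satisfying the Maurer--Cartan equation $d\alpha + \tfrac{1}{2}[\alpha,\alpha]=0$, then $\Upsilon\circ\alpha$ is again Maurer--Cartan, because $d$ commutes with $\Upsilon$ and the bracket is intertwined. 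Applying this to the morphism $\Upsilon_n\colon \mathfrak{t}_n \to \End(V_1\otimes\cdots\otimes V_n)$ provided by Lemma \ref{lemmakey} and to the form $\theta_n$, whose flatness was established in the lemma preceding, I conclude that $\Upsilon_n(\theta_n)$ is a flat connection on the trivial bundle.

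Next I would appeal to the standard correspondence between flat connections on a trivial bundle over a connected manifold and representations of its fundamental group. Given a flat connection $\nabla$ on $M\times V$ and a basepoint $m_0\in M$, parallel transport along loops defines a group homomorphism $\mathrm{Hol}_{m_0}\colon \pi_1(M,m_0) \to \Aut(V)$ that depends only on homotopy classes (this is precisely what flatness buys). Applying this with $M=\hat{C}_n(\mathbb{C})$, $V = V_1\otimes\cdots\otimes V_n$, basepoint $\mathbf{p}$, and $\nabla = d + \Upsilon_n(\theta_n)$, and then using the Fox--Neuwirth isomorphism $P_n \cong \pi_1(\hat{C}_n(\mathbb{C}),\mathbf{p})$, gives the desired representation
\[
P_n \;\longrightarrow\; \Aut\bigl(V_1\otimes\cdots\otimes V_n\bigr).
\]

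The substantive content has already been absorbed into Lemma \ref{lemmakey} and the flatness lemma for $\theta_n$, so there is no serious obstacle left; the only point that requires mild care is verifying that the pushforward of a Maurer--Cartan form along a Lie algebra homomorphism is again Maurer--Cartan, which is a short bookkeeping computation using that $\Upsilon_n$ intertwines brackets. Everything else is an application of standard differential-geometric facts about holonomy of flat connections on trivial bundles.
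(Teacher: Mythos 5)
Your argument is correct and is essentially identical to the paper's own proof: both push the flat $\mathfrak{t}_n$-valued KZ form forward along the Lie algebra morphism $\Upsilon_n$ of Lemma \ref{lemmakey}, observe that flatness is preserved, and then take holonomy using the identification $P_n \cong \pi_1(\hat{C}_n(\mathbb{C}), \mathbf{p})$. Your version merely spells out the two standard facts (Maurer--Cartan forms push forward along Lie algebra homomorphisms; flat connections on trivial bundles yield holonomy representations) that the paper leaves implicit.
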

\begin{proof}
Since the KZ connection $\theta_n$ is flat and $\Upsilon_n$ is a morphism of Lie algebras, the connection $\Upsilon_n (\theta_n)$ is a flat connection on the trivial vector bundle over $\hat{C}_n(\mathbb{C})$. Using the identification
\[P_n \cong \pi_1 (\hat{C}_n(\mathbb{C}), {\bf p}),\]
one obtains the desired representation. 
\end{proof}

The construction above can be symmetrized to obtain representation of the braid groups $B_n$.

\begin{theorem}\label{theorembraids}
Let $\frak{g}$ be a finite dimensional complex semisimple Lie algebra and $\rho: \g \rightarrow \End(V)$ be representation of $\frak{g}$. The vector space $V^{\otimes n}$ has the structure of a representation of the braid group $B_n$.
\end{theorem}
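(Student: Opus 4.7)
The plan is to upgrade the construction of Theorem \ref{theorempurebraids} by exploiting the extra symmetry available when all tensor factors coincide. Since $V_1 = \cdots = V_n = V$, the symmetric group $\Sigma_n$ acts on $V^{\otimes n}$ by permuting factors, while it acts freely on $\hat{C}_n(\mathbb{C})$ by permuting coordinates with quotient $C_n(\mathbb{C})$, whose fundamental group is $B_n$. My strategy is to show that these two actions combine to make the trivial bundle $\hat{C}_n(\mathbb{C}) \times V^{\otimes n}$ into a $\Sigma_n$-equivariant bundle on which $\Upsilon_n(\theta_n)$ is an equivariant connection; descending along the regular $\Sigma_n$-cover $\hat{C}_n(\mathbb{C}) \to C_n(\mathbb{C})$ then yields a flat bundle over $C_n(\mathbb{C})$, whose holonomy is the sought-after $B_n$-representation on $V^{\otimes n}$.

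The core calculation is a single equivariance identity. For $\sigma \in \Sigma_n$, let $P_\sigma$ be the operator on $V^{\otimes n}$ that places the $k$-th tensor factor in position $\sigma(k)$. Unfolding $\Upsilon_n(t_{ij}) = (\rho \otimes \cdots \otimes \rho) \circ \lambda^{ij}(\Omega)$, one sees at once that
\[ P_\sigma\, \Upsilon_n(t_{ij})\, P_\sigma^{-1} = \Upsilon_n\bigl(t_{\sigma(i)\sigma(j)}\bigr), \]
since $\Upsilon_n(t_{ij})$ acts nontrivially only in the $i$-th and $j$-th slots. On the base, the action of $\sigma$ on $\hat{C}_n(\mathbb{C})$ satisfies $\sigma^* w_{ij} = w_{\sigma^{-1}(i)\sigma^{-1}(j)}$, directly from $w_{ij} = d\log(z_i - z_j)$. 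Using the symmetries $t_{ij} = t_{ji}$ and $w_{ij} = w_{ji}$ to rewrite the defining sum $\sum_{i<j} t_{ij} w_{ij}$ as a sum over unordered pairs, a single reindexing combines the two formulas above into
\[ \sigma^*\bigl(\Upsilon_n(\theta_n)\bigr) = P_\sigma \cdot \Upsilon_n(\theta_n) \cdot P_\sigma^{-1}, \]
which is exactly equivariance of the connection with respect to the diagonal $\Sigma_n$-action on $\hat{C}_n(\mathbb{C}) \times V^{\otimes n}$.

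Once equivariance is in hand, the rest is formal. The quotient $\mathcal{V} := (\hat{C}_n(\mathbb{C}) \times V^{\otimes n})/\Sigma_n$ is a vector bundle of rank $(\dim V)^n$ over $C_n(\mathbb{C})$, and the equivariant connection $\Upsilon_n(\theta_n)$ descends to a connection on $\mathcal{V}$ which remains flat since flatness was already established on the cover in the proof of Theorem \ref{theorempurebraids}. Choosing a point of $\mathcal{V}$ lying over $\mathbf{p}$ and taking holonomy gives a homomorphism $\pi_1(C_n(\mathbb{C}), \mathbf{p}) \to \Aut(V^{\otimes n})$, and the identification $B_n \cong \pi_1(C_n(\mathbb{C}), \mathbf{p})$ packages this as the desired $B_n$-representation; by construction its restriction to $P_n$ recovers Theorem \ref{theorempurebraids}. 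The only nonformal step is the equivariance identity for $\Upsilon_n(t_{ij})$, and I expect no real obstacle beyond carefully tracking the indexing conventions.
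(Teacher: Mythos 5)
Your proposal is correct and follows essentially the same route as the paper: apply Theorem \ref{theorempurebraids} with all factors equal to $V$, check that $\Upsilon_n(\theta_n)$ is invariant under the diagonal $\Sigma_n$-action, descend to the quotient bundle over $C_n(\mathbb{C})$, and take holonomy. The only difference is that you spell out the equivariance identity that the paper dismisses with ``one easily checks,'' which is a welcome addition rather than a divergence.
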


\begin{proof}
By applying the construction in Theorem \ref{theorempurebraids} to the case $\rho_i=\rho$, one obtains a flat connection $\Upsilon_n (\theta_n)$ on the trivial vector bundle over $\hat{C}_n(\mathbb{C})$ with fiber $V^{\otimes n}$. The symmetric group $\Sigma_n$ acts on $V^{\otimes n}$ and also on $\hat{C}_n(\mathbb{C})$ by permuting the coordinates.
 Therefore it also acts diagonally on $\Omega(\hat{C}(\mathbb{C}))\otimes \End(V^{\otimes n})$. One easily checks that $\Upsilon_n (\theta_n)$ is invariant under this diagonal action, and
therefore descends to a flat connection form on the vector bundle $ (\hat{C}_n(\mathbb{C}) \times V^{\otimes n})/\Sigma_n$ over 
$\hat{C}_n(\mathbb{C})/\Sigma_n \cong C_n(\mathbb{C})$. The holonomy of this connection gives a representation of
$\pi_1(C_n(\mathbb{C}), {\bf p})\cong B_n$ on the vector space $V^{\otimes n}$.
\end{proof}

\subsection{Representations of $\frak{sl_2}(\mathbb{C})$ and Lawrence-Krammer-Bigelow }

We will consider the KZ connection for the semisimple Lie algebras $\g=\frak{sl}_2(\mathbb{C})$ of $2 \times 2$ complex matrices of trace zero. Let us fix the following basis for $\slt$:

\begin{equation*}
H= \left( \begin{array}{cc} 1 &0\\
0&-1\\
\end{array}\right)
, \,\,E = \left( \begin{array}{cc} 0 &1\\
0&0\\
\end{array}\right)
,\,\, F= \left( \begin{array}{cc} 0 &0\\
1&0\\
\end{array}\right).
\end{equation*}
In terms of the basis, the bracket is given by:
\[
[H,E]=2E, \,\,[H,F]=-2F,\,\, [E,F]=H.\]

\begin{definition}
Let $\lambda \in \mathbb{C}$ be a complex number. The Verma module $M_\lambda$ with highest weight $\lambda$ is the representation of $\slt$ defined as follows. As a vector space it  is generated by the elements $F^j(v_\lambda)$ for $j \geq 0$.
The action of $\slt$ on $M_\lambda$ is given by:
\begin{eqnarray*}
 F (F^j(v_\lambda))&=&F^{j+1}(v_\lambda),\\   E (F^j(v_\lambda))&=&j(\lambda-j+1)F^{j-1}(v_\lambda),\\ 
 H (F^j(v_\lambda))&=&
(\lambda -2j)F^{j}(v_\lambda).
\end{eqnarray*}

Given $\Lambda=(\lambda_1, \dots ,\lambda_n) \in \mathbb{C}^n$ we set $|\Lambda|=\lambda_1 + \dots + \lambda_n$,
and we define the space of weight vectors with weight $|\Lambda| -2m$ to be the vector space:

\[W[|\Lambda|-2m]:=\{x \in M_{\lambda_1} \otimes \dots \otimes M_{\lambda_n}: H(x)=(|\Lambda|-2m)x\}.\]
The space of nullvectors $N[|\Lambda|-2m]$ is:
\[N[|\Lambda|-2m]:=\{x \in W[|\Lambda|-2m]: E(x)=0\}.\]
\end{definition}

\begin{lemma}
The vector space $W[|\Lambda|-2m]$ is finite dimensional, of dimension equal to the number of ordered partitions of length $n$ of the number $m$.
\end{lemma}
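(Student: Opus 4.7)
The plan is to exhibit an explicit basis of $W[|\Lambda|-2m]$ indexed by ordered partitions and count.

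First I would observe from the action formulas in the definition of $M_\lambda$ that the vectors $F^{j}(v_\lambda)$, for $j \geq 0$, form a basis of $M_\lambda$ consisting of weight vectors, with $F^{j}(v_\lambda)$ having weight $\lambda - 2j$. Consequently, pure tensors of the form
\[
F^{j_1}(v_{\lambda_1}) \otimes F^{j_2}(v_{\lambda_2}) \otimes \cdots \otimes F^{j_n}(v_{\lambda_n}), \qquad j_i \geq 0,
\]
form a basis of $M_{\lambda_1}\otimes \cdots \otimes M_{\lambda_n}$.

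Next I would use the fact that the Lie algebra acts on a tensor product via the coproduct $\Delta(x) = x\otimes 1 + 1\otimes x$ for $x\in \g$, so that $H$ acts as $H\otimes 1 \otimes \cdots \otimes 1 + \cdots + 1\otimes\cdots\otimes 1\otimes H$. This action is diagonal in the basis above: the eigenvalue of $H$ on the tensor $F^{j_1}(v_{\lambda_1})\otimes\cdots\otimes F^{j_n}(v_{\lambda_n})$ equals $\sum_{i=1}^n (\lambda_i - 2 j_i) = |\Lambda| - 2(j_1 + \cdots + j_n)$. Therefore $W[|\Lambda|-2m]$ has as basis precisely those pure tensors for which $j_1 + \cdots + j_n = m$ with each $j_i \geq 0$.

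Finally, I would note that such tuples $(j_1,\dots,j_n)$ are, by definition, the ordered partitions of $m$ of length $n$ (i.e.\ weak compositions of $m$ into $n$ parts). Hence $\dim W[|\Lambda|-2m]$ equals the number of such partitions, which is in particular finite (indeed equal to $\binom{m+n-1}{n-1}$). There is no real obstacle here; the only point requiring minor care is that $H$ acts on the tensor product through its iterated coproduct rather than simply on one factor, but this is a standard consequence of $H \in \g$ being primitive in $\U$.
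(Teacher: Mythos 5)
Your argument is correct and follows essentially the same route as the paper's proof: diagonalize the $H$-action on the basis of pure tensors $F^{j_1}(v_{\lambda_1})\otimes\cdots\otimes F^{j_n}(v_{\lambda_n})$, note the eigenvalue is $|\Lambda|-2\sum_i j_i$, and count the tuples with $\sum_i j_i=m$. Your explicit mention of the coproduct and of the binomial count $\binom{m+n-1}{n-1}$ are welcome additions but do not change the substance.
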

\begin{proof}
Since $H$ acts on $M_\lambda$ in a diagonal manner with respect to the given basis, a sum of basis elements is in 
$W[|\Lambda|-2m]$ if and only if each of the basis elements is. Therefore, it suffices to count the number of basis elements in $W[|\Lambda|-2m]$. A basis element $x= F^{j_i}\otimes\dots \otimes F^{j_n}$ belongs to $W[|\Lambda|-2m]$ precisely when:
\[ (|\Lambda|-2m)x=H(x)=\sum_i (\lambda-2j_i)x,\]
that is, precisely when $\sum_i j_i=m$. 
\end{proof}

Given a nonzero complex number $\tau$ we define the KZ connection with parameter $\tau$, \[\theta_{n,\tau}:=\frac{1}{\tau}\theta_n.\] Note that since $d\theta_n=0=[\theta_n,\theta_n]$, the connection $\theta_{n,\tau}$ is flat. We now fix another complex number $\lambda \in \mathbb{C}$ and consider the representation $M^{\otimes n}_\lambda$. Using the notation in Theorem \ref{theorempurebraids}, the differential form $\Upsilon_n (\theta_{n,\tau})$ is a flat connection 
on $M^{\otimes n}_\lambda$.  

\begin{proposition}
The flat connection $\Upsilon_n \theta_{n,\tau}$ commutes with the action $\rho: \slt \rightarrow \End(M^{\otimes n}_\lambda)$, i.e. for any vector field $Y$ on the configuration space $\hat{C}_n(\mathbb{C})$:

\[ \rho(v) (\Upsilon_n \theta_{n,\tau}(Y)(x))=\Upsilon_n \theta_{n,\tau}(Y)(\rho(v)(x)),\]
for any $v \in \slt$ and $x \in M^{\otimes n}_\lambda$.
The vector space $N[n\lambda-2m]$ is a representation \[\mu_{n,m,\lambda, \tau}: B_n\rightarrow \Aut(N[n\lambda-2m])\] of the braid group $B_n$.
\end{proposition}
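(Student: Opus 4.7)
The proposition has two parts: equivariance of the connection $\Upsilon_n(\theta_{n,\tau})$ with respect to the diagonal $\slt$-action on $M_\lambda^{\otimes n}$, and the resulting $B_n$-action on $N[n\lambda - 2m]$. For the first, it suffices to show that each endomorphism $\Upsilon_n(t_{ij})$ commutes with the diagonal action $\Delta^{(n-1)}(v) := \sum_{k=1}^n 1\otimes\cdots\otimes v_{(k)}\otimes\cdots\otimes 1$ for every $v \in \slt$, since $\Upsilon_n(\theta_{n,\tau})(Y) = \tau^{-1}\sum_{i<j} w_{ij}(Y)\,\Upsilon_n(t_{ij})$. Passing through the algebra morphism $\rho^{\otimes n}: \U^{\otimes n} \to \End(M_\lambda^{\otimes n})$, this reduces to proving that $[\lambda^{ij}(\Omega),\,\Delta^{(n-1)}(v)] = 0$ in $\U^{\otimes n}$.

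The proof of this bracket relation is where Lemma \ref{lemmaomega} does the work: it lets me rewrite $2\,\lambda^{ij}(\Omega) = \lambda^{ij}(\Delta(C)) - \lambda^{ij}(1 \otimes C) - \lambda^{ij}(C \otimes 1)$. The second and third terms are central in $\U^{\otimes n}$ because $C$ is central in $\U$, and so they commute with everything. For the remaining term $\lambda^{ij}(\Delta(C))$, those summands $v_{(k)}$ of $\Delta^{(n-1)}(v)$ with $k \notin \{i,j\}$ act on tensor slots disjoint from its support and thus commute with it trivially; the surviving combination $v_{(i)} + v_{(j)}$ equals $\lambda^{ij}(\Delta(v))$, and commutes with $\lambda^{ij}(\Delta(C))$ because $\lambda^{ij}$ and $\Delta$ are algebra morphisms and $C$ is central in $\U$:
\[
\lambda^{ij}(\Delta(v))\,\lambda^{ij}(\Delta(C)) = \lambda^{ij}(\Delta(vC)) = \lambda^{ij}(\Delta(Cv)) = \lambda^{ij}(\Delta(C))\,\lambda^{ij}(\Delta(v)).
\]

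With equivariance established, the second assertion follows by a standard ODE argument: if $T(t)$ solves the transport equation $T' = A\,T$ with $T(0) = \id$ and $A(t)$ commutes with $\rho(v)$ for every $v$, then $S(t) := T(t)\rho(v) - \rho(v)T(t)$ satisfies $S' = A\,S$ with $S(0) = 0$, forcing $S \equiv 0$. Hence parallel transport, and in particular holonomy around any loop, is $\slt$-equivariant and preserves both the weight space $W[n\lambda-2m]$ (the kernel of $H - (n\lambda - 2m)\id$) and its subspace $N[n\lambda-2m] = W[n\lambda-2m]\cap \ker E$. By Theorem \ref{theorembraids} the (symmetrized) holonomy yields a $B_n$-action on $M_\lambda^{\otimes n}$; the $\Sigma_n$-action by permutation of tensor factors used to descend from $\hat{C}_n(\mathbb{C})$ to $C_n(\mathbb{C})$ is itself $\slt$-equivariant and so also preserves $N[n\lambda - 2m]$. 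Restricting gives $\mu_{n,m,\lambda,\tau}$. The only delicate point in the whole argument is the commutativity of $\lambda^{ij}(\Delta(C))$ with the diagonal $\slt$-action, and once Lemma \ref{lemmaomega} is in hand this is a short manipulation, so I anticipate no serious obstacle.
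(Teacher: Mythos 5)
Your proof is correct and follows essentially the same route as the paper: reduce to showing each $\Upsilon_n(t_{ij})$ commutes with the diagonal action, use Lemma \ref{lemmaomega} to replace $\lambda^{ij}(\Omega)$ by $\lambda^{ij}(\Delta(C))$ up to central terms, and invoke centrality of $C$; then restrict the flat connection to $N[n\lambda-2m]$ and descend to $C_n(\mathbb{C})$. You simply spell out two steps the paper leaves implicit (the identity $v_{(i)}+v_{(j)}=\lambda^{ij}(\Delta(v))$ and the ODE argument for equivariance of parallel transport), which is a welcome but not substantively different elaboration.
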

\begin{proof}
It suffices to prove that each of the endomorphisms $\Upsilon_n(t_{ij})$ commutes with $\rho$.
The action of $\slt$ on $M^{\otimes n}_\lambda$ is given by $\rho^{\otimes n}\circ  \Delta^{ n}(x): \g \subset \U \rightarrow \End(M^{\otimes n}_\lambda).$ Therefore, it suffices to prove that each element $\Omega^{ij}=\lambda^{ij}(\Omega)$ commutes with $\Delta^{ n}(v)$. By Lemma \ref{lemmaomega} we know that $\Omega$ differs from $\Delta(C)$ by a central element, so it suffices to prove that \[[\lambda^{ij}(\Delta(C)), \Delta^{ n}(v)]=0,\] which is a consequence of the fact that $C$ is a central element of $\U$. This completes the proof of the first claim.
Since the connection $\Upsilon_n \theta_{n,\tau}$ commutes with the action of $\rho$, then it preserves the finite dimensional vector space $N[n\lambda-2m],$ and therefore it restricts to a flat connection on the trivial vector bundle over $\hat{C}_n(\mathbb{C})$ with fiber $N[n\lambda-2m]$ . As before, the connection is invariant under the action of $\Sigma_n$ and therefore it descends to a flat connection on the quotient vector bundle over $C_n(\mathbb{C})$. The holonomy of this connection gives $N[n\lambda-2m]$ the structure of a representation of $B_n$.
\end{proof}

We are now ready to state the following remarkable theorem of Khono  which describes the relation between the Lawrence-Krammer-Bigelow representations and the representations on the nullpaces of Verma modules of $\slt$ given by holonomy of the KZ connection.

\begin{theorem}[Kohno \cite{Kohno2}]
There exists an open dense subset $U \subset \mathbb{C}^2$, such that for $(\tau,\lambda)\in U$ the
representation \[ \mu_{n,2,\lambda,\tau}: B_n \rightarrow \Aut(N[n\lambda-4])\] is equvalent to the representation
obtained from the Lawrence-Krammer-Bigelow representation \[B_n \rightarrow \Aut_R(\mathcal{H})\cong \Aut(R^{\frac{n(n-1)}{2}}),\] by setting:
\[ q=\exp^{-2\pi i \frac{\lambda}{\tau}}, \,\, t= \exp^{\frac{2\pi i}{\tau}}.\]
Here, as before, $R$ is the ring $R=\mathbb{Z}[q,q^{-1},t,t^{-1}]$.
\end{theorem}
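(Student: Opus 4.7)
The plan is to realize both sides of the claimed equivalence on the same space of twisted cycles in the configuration space $\mathcal{C} = C_2(\Sigma_{\mathbf z})$, using the Schechtman--Varchenko hypergeometric integral representation of solutions of the Knizhnik--Zamolodchikov equation. For $\g = \slt$ acting on $M_\lambda^{\otimes n}$, and in the nullspace $N[n\lambda - 4]$ corresponding to $m = 2$, horizontal sections of $\Upsilon_n(\theta_{n,\tau})$ admit a basis of multivalued functions of the form
\[
\Phi_\Delta(z_1,\dots,z_n) = \int_\Delta \prod_{a=1}^{2}\prod_{i=1}^{n}(w_a - z_i)^{-\lambda/\tau}\,(w_1 - w_2)^{2/\tau}\,dw_1\wedge dw_2,
\]
where $\Delta$ ranges over a basis of twisted cycles in $\hat{C}_2(\Sigma_{\mathbf z})$ with coefficients in the local system $\mathcal{L}$ determined by the multivalued integrand on $\Sigma_{\mathbf z} = \mathbb{C}\setminus\{z_1,\dots,z_n\}$. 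The critical observation is that $\mathcal{L}$, pushed down to the unordered configuration space $\mathcal{C}$, is exactly the rank-one local system classified by the homomorphism $\rho$ of the previous subsection under the specialization $q = e^{-2\pi i \lambda/\tau}$, $t = e^{2\pi i/\tau}$: the factor $(w_a - z_i)^{-\lambda/\tau}$ contributes one power of $q$ per unit of winding around a puncture, while $(w_1 - w_2)^{2/\tau}$ contributes one power of $t$ per half-twist, which matches precisely the squared-argument invariant $u$. Hence twisted cycles for $\mathcal{L}$ are naturally identified with classes in the specialized LKB module $\mathcal{H}\otimes_R \mathbb{C}$.

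Given this dictionary, the proof proceeds in two steps. First, on the KZ side, Schechtman--Varchenko's theorem identifies $N[n\lambda-4]$ with a twisted homology group $H_2(\mathcal{C},\mathcal{L})$ for generic $(\tau,\lambda)$; the open dense set $U$ is then the complement of the divisor on which the period determinant vanishes, which avoids both the reducibility locus of the Verma modules and the degenerations of the hypergeometric pairing. A dimension check is consistent: by the lemma on weight spaces preceding the theorem, $\dim N[n\lambda-4] = \dim W[n\lambda-4] - \dim W[n\lambda-2] = \binom{n+1}{2} - n = \tfrac{n(n-1)}{2}$, which matches the $R$-rank of $\mathcal{H}$ stated in the Krammer--Bigelow theorem. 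Second, the braid actions agree: on the hypergeometric side, the monodromy of the KZ connection in the variables $z_i$ is computed by the Gauss--Manin connection on twisted homology, so a braid $\beta \in B_n$, viewed as a loop in $C_n(\mathbb{C})$, acts by pushforward along the corresponding element of $\mathsf{Mod}(D,Q)$ applied to $\Sigma_{\mathbf z}$ and lifted to the covering $\tilde{\mathcal{C}}$. Under the identification $B_n \cong \mathsf{Mod}(D,Q)$ used to define the LKB representation, this pushforward is literally the LKB action.

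The main obstacle will be the first step: establishing that the hypergeometric integrals really furnish a full basis of horizontal sections for $N[n\lambda-4]$ and pair nondegenerately with a basis of twisted cycles that maps to an $R$-basis of $\mathcal{H}$. Doing so cleanly requires (i) an analysis of the determinant of the period matrix, a hypergeometric analogue of Shapovalov's determinant, to describe explicitly the divisor complementary to $U$, and (ii) an explicit comparison between Schechtman--Varchenko's \emph{caterpillar} cycles indexed by pairs $1 \le i < j \le n$ and a known $R$-basis of $\mathcal{H}$, for instance the Krammer basis. Once these identifications are in place, the naturality of the Gauss--Manin connection with respect to the mapping class group action on the base delivers the desired equivalence of $B_n$-representations throughout the open dense locus $U$.
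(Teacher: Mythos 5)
The paper states this theorem without proof, citing Kohno, so there is no internal argument to compare against; your outline reproduces the strategy of Kohno's actual proof (hypergeometric solutions of KZ \`a la Schechtman--Varchenko, identification of the space of null vectors with twisted homology of $C_2(\Sigma)$, and matching of the Gauss--Manin monodromy with the mapping-class-group action defining LKB). The skeleton is right: your dictionary between the exponents of the master function and the parameters $q=e^{-2\pi i\lambda/\tau}$, $t=e^{2\pi i/\tau}$ is consistent with the homomorphism $\rho=q^{w}t^{u}$ of the LKB construction, and the dimension count $\dim N[n\lambda-4]=\binom{n+1}{2}-n=\tfrac{n(n-1)}{2}$ is correct for generic $\lambda$ (where $E:W[n\lambda-4]\to W[n\lambda-2]$ is surjective because the tensor product of generic Verma modules is a direct sum of Verma modules).

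That said, two gaps remain beyond the one you flag. First, the LKB module is defined as the \emph{ordinary} homology $\mathcal{H}=H_2(\tilde{\mathcal{C}},\mathbb{Z})$ of the abelian cover, whereas the cycles on which the hypergeometric integrals converge (your ``caterpillar'' cycles joining consecutive punctures) are noncompact and naturally live in locally finite (Borel--Moore) homology, or in homology relative to a neighborhood of the diagonal and of the punctures. The comparison map from ordinary to locally finite twisted homology is an isomorphism only for generic values of $(q,t)$, and this genericity is itself part of what carves out the open dense set $U$; your proposal silently identifies the two. Second, the assertion that the Gauss--Manin pushforward ``is literally the LKB action'' needs care: the monodromy of the multivalued master function introduces an overall abelian character of $B_n$ (a choice of branch at the basepoint), so the two representations agree only after normalizing by this one-dimensional twist; Kohno's statement of equivalence presupposes that normalization. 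Neither issue is fatal --- both are handled in the literature you would be invoking --- but a complete proof must address them explicitly, together with the nonvanishing of the hypergeometric period determinant that you already identified as the main analytic input.
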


\section{The Yang-Baxter equation and the Drinfeld-Kohno theorem}\label{section5}
\subsection{ The Yang-Baxter equation and quasi-triangular bialgebras}
The Yang-Baxter equation was originally introduced in the field of statistical mechanics. Solutions to this equation provide a systematic way to construct representations of the braid groups $B_n$. We will introduce the notion of quasi-triangular bialgebra, and explain that these are algebraic structures with the property that
their modules are naturally endowed with solutions to the Yang-Baxter equation.

\begin{definition}
Let $V$ be a complex vector space. An automorphism $R$ of $V \otimes V$ is called an $r$-matrix if it satisfies the  Yang-Baxter equation:
\[(R \otimes \id) \circ (\id \otimes R)\circ (R \otimes \id)=(\id \otimes R)\circ (R \otimes \id)\circ (\id \otimes R)\in \End(V^{\otimes 3}). \]
\end{definition}

\begin{lemma}
Given an $r$-matrix $R \in \Aut(V\otimes V)$, there is a representation \[\rho_R: B_n \rightarrow \Aut(V^{\otimes n}),\]
defined by:

\[\sigma_i \mapsto \id^{i-1}\otimes R \otimes \id^{n-i-1}.\]
\end{lemma}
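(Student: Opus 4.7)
The plan is to invoke Artin's presentation of $B_n$ and verify that the assignment $\sigma_i \mapsto R_i := \id^{i-1}\otimes R \otimes \id^{n-i-1}$ respects the two defining relations; by the universal property of presentations, this then extends uniquely to a group homomorphism $\rho_R: B_n \to \Aut(V^{\otimes n})$. Note first that each $R_i$ is invertible since $R$ is, so we indeed land in $\Aut(V^{\otimes n})$.

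For the far commutation relation $\sigma_i \sigma_j = \sigma_j \sigma_i$ with $|i-j|\geq 2$, observe that $R_i$ acts non-trivially only on the $i$-th and $(i{+}1)$-th tensor factors, while $R_j$ acts non-trivially only on the $j$-th and $(j{+}1)$-th factors. Since $|i-j|\geq 2$, these index sets are disjoint, so $R_i$ and $R_j$ factor through complementary tensor slots and commute. This is a straightforward check using the fact that $(A \otimes \id)(\id \otimes B) = A \otimes B = (\id \otimes B)(A \otimes \id)$ for operators acting on disjoint factors.

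For the braid relation $\sigma_i \sigma_{i+1}\sigma_i = \sigma_{i+1}\sigma_i \sigma_{i+1}$, the key observation is that both sides act as the identity on all tensor factors except the $i$-th, $(i{+}1)$-th and $(i{+}2)$-th. Thus the equality $R_i R_{i+1} R_i = R_{i+1} R_i R_{i+1}$ in $\End(V^{\otimes n})$ reduces to the corresponding equality in $\End(V^{\otimes 3})$, namely
\[
(R \otimes \id)(\id \otimes R)(R \otimes \id) = (\id \otimes R)(R \otimes \id)(\id \otimes R),
\]
which is precisely the Yang--Baxter equation assumed of $R$.

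The only mildly subtle point is the reduction to $n=3$ in the second step; everything else is bookkeeping. Once both relations are verified, Artin's theorem guarantees the existence and uniqueness of $\rho_R$, completing the proof.
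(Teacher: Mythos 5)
Your proof is correct and follows essentially the same route as the paper's: verify Artin's two relations, with the far commutation relation being immediate from the disjointness of the tensor slots and the braid relation reducing to the Yang--Baxter equation on $V^{\otimes 3}$. Your write-up is in fact somewhat more careful than the paper's, which simply asserts both verifications without spelling out the reduction to $n=3$.
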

\begin{proof}
The identity \[ \rho(\sigma_i)\rho(\sigma_{i+1})\rho(\sigma_i)=\rho(\sigma_{i+1})\rho(\sigma_i)\rho(\sigma_{i+1}),\]
follows from the fact that $R$ satisfies the Yang-Baxter equation. The relation $\sigma_i \sigma_j= \sigma_j \sigma_i$ if $|i-j|>2$ is clear from the form of the map $\rho_R$.
\end{proof}

Let us now introduce some algebraic definitions.

\begin{definition}
A bialgebra is a tuple $(A, \mu, \eta, \Delta, \epsilon)$ where:
\begin{itemize}
\item $A$ is a vector space over $\mathbb{C}$.
\item $\mu$ is  map $A \otimes A \rightarrow A$, called the product.
\item $\eta$ is a map $\mathbb{C}\rightarrow A$, called the unit.
\item $\Delta$ is a map $ \Delta: A \rightarrow A \otimes A$, called the coproduct.
\item $\epsilon$ is a map $ A \rightarrow \mathbb{C}$, called the counit.
\end{itemize}
Satisfying the conditions:
\begin{itemize}
\item $(A,\mu, \eta)$ is a unital associative algebra.
\item $(A,\Delta, \epsilon)$ is a counital coassociative coalgebra.
\item $ \Delta: A \rightarrow A \otimes A$ is a morphism of associative algebras.
\end{itemize}
Of course, we will abuse the notation and simply say that $A$ is a bialgebra.
\end{definition}
\begin{example}
We have already encountered an important example of a bialgebra, the universal enveloping
algebra $\U$ of a Lie algebra $\g$. Recall that the coproduct is the unique algebra homomorphism
$\Delta: \U \rightarrow \U \otimes \U$, with the property that $ \Delta(x)= 1 \otimes x + x \otimes 1$, for any
$x \in \g$. The bialgebra $\U$ is cocommutative but in general it is not commutative.
\end{example}


 Given  modules $V,W$ of a bialgebra $A$,
the vector space $V \otimes W$, is also a module of $A$ with action given by:
\[a (v\otimes w):= \Delta(a)(v \otimes w).\]
Thus, bialgebras are algebras for which the category of modules admits a tensor product. In general, the modules $V\otimes W$ and $ W \otimes V$ need not be isomorphic. In case the coproduct $\Delta$ is cocommutative,
the linear map \[ \tau_{V,W}: V \otimes W \rightarrow W \otimes V,\]
\[ v\otimes w \mapsto w \otimes v,\]
is an isomorphism of $A$-modules. The condition that $A$ is cocommutative is sufficient, but not necessary in order to have consistent isomorphism of $A$-modules $V \otimes W \cong W \otimes V$. This consideration leads to the following definition:

\begin{definition}
An almost-cocommutative bialgebra is a bialgebra $(A, \mu, \eta, \Delta, \epsilon)$ together with an invertible element
$R \in A\otimes A$, called the universal $r$-matrix,  such that for any $a \in A$:
\[ \tau_{A,A} \circ \Delta(a)= R \Delta(a) R^{-1},\]
where $\tau_{A.A}: A \otimes A \rightarrow A \otimes A$ is given by $a \otimes b \mapsto b \otimes a$.
\end{definition}

Given an almost-cocommutative bialgebra, and modules $V, W$ the map \[ \tau_{R,V,W}:=\tau_{V,W} \circ R: V \otimes W \rightarrow W\otimes V,\] is an isomorphism of $A$-modules. Thus, an almost-cocommutative bialgebra is a bialgebra for which there are natural isomorphisms of $A$-modules $ V\otimes W \cong W \otimes V$.

 Let us now consider $A$-modules $V,W,Z$. There are several ways to obtain isomorphisms of $A$-modules between $ V\otimes W \otimes Z$ and $W \otimes V \otimes Z$, and it is natural to expect that these isomorphisms should coincide. Namely, one would expect the following diagrams to commute:
\begin{equation}\label{diagram1}
\xymatrix{
V \otimes W \otimes Z \ar[rr]^{\tau_{R,V,W\otimes Z}} \ar[d]_{\tau_{R,V,W}\otimes \id} & & W \otimes Z \otimes V \ar[d]^{\id}\\
W \otimes V \otimes Z   \ar[rr]^{\id \otimes \tau_{R,V,Z}} && W\otimes Z \otimes V
}
\end{equation}

\begin{equation}\label{diagram2}
\xymatrix{
V \otimes W \otimes Z \ar[rr]^{\tau_{R,V\otimes W,Z}} \ar[d]_{\id \otimes \tau_{R,W,Z}} & & Z \otimes V \otimes W \ar[d]^{\id}\\
V \otimes Z \otimes W   \ar[rr]^{ \tau_{R,V,Z}\otimes \id} && Z\otimes V \otimes W
}
\end{equation}

The commutativity of these diagrams can be assured by imposing conditions on the universal $r$-matrix $R$.
This leads to the following definition. 

\begin{definition}
A quasi-triangular bialgebra is an almost-cocommutative bialgebra $A$ such that the universal $r$-matrix $R\in A \otimes A$ satisfies the equations:
 \[ (\Delta \otimes \id) R=R_{13} R_{23}.\]
\[ (\id \otimes \Delta) R = R_{13} R_{12}.\]
Here $R_{12}=R \otimes 1, R_{23}=1 \otimes R$ and $R_{13}= (\tau_{A,A}\otimes \id) (R_{23})=(\id \otimes \tau_{A,A})(R_{12}) $.
\end{definition}

\begin{example}
The simplest example of a quasi-triangular bialgebra is a cocommutative bialgebra, with $r$-matrix $R=1 \otimes 1$. Thus, the universal enveloping algebra $\U$ of a Lie algebra $\g$ is a quasi-triangular bialgebra.
\end{example}

\begin{remark}
The definition of a quasi-triangular bialgebra is designed to guarantee that the category of modules has a well
behaved tensor product. Namely, a tensor product with natural isomorphisms \[C_{VW}:V \otimes W \cong W \otimes V,\]
for which the diagrams (\ref{diagram1}) and (\ref{diagram2}) commute. This kind of category is known as a braided monoidal category.   Observe that in diagrams (\ref{diagram1}) and (\ref{diagram2}) expressions of the type 
$V \otimes W \otimes Z$ appear. This notation is using implicitly that the natural isomorphisms of vector spaces:
\[(V \otimes W) \otimes Z \cong  V \otimes (W\otimes Z)\]
are isomorphism of representations. 
In more generality, one may consider the case where there are natural isomorphisms of representations
\[(V\otimes W)\otimes Z \cong V \otimes (W \otimes Z), \]
which are not the obvious isomorphisms of vector spaces.
This corresponds to the general definition of braided monoidal category. The algebraic structure on $A$ that guarantees that the category of modules is a braided monoidal category is that of a quasi-triangular quasi-bialgebra. Note that the fact that for a bialgebra the obvious linear isomorphism
$(V \otimes W) \otimes Z \cong V \otimes (W\otimes Z)$ is a morphism of modules corresponds to the fact that the coproduct is coassociative. Thus, in a quasi-triangular quasi-bialgebra one does not require $\Delta$ to be strictly coassociative. Instead, there is an element $ \phi \in A\otimes A \otimes A$, called the Drinfeld associator, which controls the lack of associativity much in the same way in which $R$ controls the lack of commutativity. We will not discuss the details of these elegant structures here. The interested reader may consult Kasssel's book \cite{Kassel}.
\end{remark}

\begin{proposition}
Let $A$ be a quasi-triangular bialgebra with universal $r$-matrix $R\in A \otimes A$. Given $A$-modules $V,W,Z$ there are isomorphisms of $A$-modules:
\[\tau_{R,V,W}: V\otimes W \rightarrow W\otimes V,\]
defined  by:

\[ \tau_{R,V,W}:= \tau_{V,W} \circ R,\]
satisfying the equations:

\begin{equation}\label{eq1}
(\tau_{R,V\otimes W, Z})= ( \tau_{R, V,Z}\otimes \id) \circ ( \id \otimes \tau_{R,W,Z }),
\end{equation}
\begin{equation}\label{eq2}
(\tau_{R,V, W\otimes Z})= (\id \otimes \tau_{R,V,Z }) \circ (\tau_{R, V,W}\otimes \id),
\end{equation}
\begin{equation}\label{eq3}
(\tau_{R,W,Z}\otimes \id)\circ (\id \otimes \tau_{R,V,Z} ) \circ (\tau_{R,V,W}\otimes \id)=(\id \otimes \tau_{R,V,W}). \circ(\tau_{R,V,Z}\otimes \id)\circ (\id \otimes \tau_{R,W,Z}).
\end{equation}
\end{proposition}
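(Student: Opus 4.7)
The plan is to verify in sequence that (i) each $\tau_{R,V,W}$ is an isomorphism of $A$-modules, (ii) the ``hexagon'' identities (\ref{eq1}) and (\ref{eq2}) hold, and (iii) the braid relation (\ref{eq3}) holds. For (i), combining the definition of the tensor product module structure $a\cdot(v\otimes w) = \Delta(a)(v\otimes w)$ with the almost-cocommutativity axiom $R\Delta(a) = \tau_{A,A}(\Delta(a))\,R$ gives
\begin{equation*}
\tau_{R,V,W}(a\cdot(v\otimes w)) = \tau_{V,W}\bigl(\tau_{A,A}(\Delta(a))\,R(v\otimes w)\bigr),
\end{equation*}
and since $\tau_{V,W}\circ(x\otimes y) = (y\otimes x)\circ\tau_{V,W}$ holds for every pure tensor $x\otimes y\in A\otimes A$ (and hence for $\tau_{A,A}(\Delta(a))$ by linearity), the right side collapses to $\Delta(a)\circ\tau_{V,W}R(v\otimes w) = a\cdot\tau_{R,V,W}(v\otimes w)$. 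Invertibility is inherited from that of $R$ and $\tau_{V,W}$.

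For (ii), writing $R = \sum_\alpha a_\alpha\otimes b_\alpha$ in Sweedler notation and using a second copy indexed by $\beta$, I would verify (\ref{eq2}) by unwinding both sides on a generic element $v\otimes w\otimes z \in V\otimes W\otimes Z$. The left side uses the quasi-triangular axiom $(\id\otimes\Delta)R = R_{13}R_{12}$ to compute the action of $R$ on the module $V\otimes(W\otimes Z)$, then applies the flip $\tau_{V,W\otimes Z}$; both operations together produce
\begin{equation*}
\sum_{\alpha,\beta} b_\alpha(w)\otimes b_\beta(z)\otimes a_\beta a_\alpha(v).
\end{equation*}
The right side, a direct expansion of two successive $\tau_{R,-,-}$'s, returns the same sum. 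Equation (\ref{eq1}) is symmetric: one replaces $(\id\otimes\Delta)R = R_{13}R_{12}$ by $(\Delta\otimes\id)R = R_{13}R_{23}$ and repeats.

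The main work is step (iii). My strategy is to pass through the Yang-Baxter identity for $R$ itself in $A^{\otimes 3}$, namely $R_{12}R_{13}R_{23} = R_{23}R_{13}R_{12}$, derived by the chain
\begin{equation*}
R_{12}R_{13}R_{23} = R_{12}(\Delta\otimes\id)R = \bigl((\tau_{A,A}\Delta)\otimes\id\bigr)(R)\cdot R_{12} = \tau_{12}(R_{13}R_{23})\cdot R_{12} = R_{23}R_{13}R_{12},
\end{equation*}
where the middle equality applies almost-cocommutativity slot-by-slot inside $(\Delta\otimes\id)R = R_{13}R_{23}$, and $\tau_{12}$ denotes the swap of the first two tensor factors of $A^{\otimes 3}$. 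Expanding both sides of (\ref{eq3}) in Sweedler notation as in step (ii), each side decomposes as the full reversal $V\otimes W\otimes Z \to Z\otimes W\otimes V$ precomposed with $R_{23}R_{13}R_{12}$ on the left and with $R_{12}R_{13}R_{23}$ on the right; the Yang-Baxter identity then closes (\ref{eq3}). The main obstacle, and the place one must be careful, is the Sweedler bookkeeping in this final step: across three successive applications of $\tau_{R,-,-}$ one must match every factor of $R$ with its correct slot in the threefold tensor and confirm that the residual permutation is in both cases the same reversal. Once that match is pinned down, the identity collapses to Yang-Baxter for $R$.
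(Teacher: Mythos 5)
Your proposal is correct and follows essentially the same route as the paper: $A$-linearity from almost-cocommutativity, the hexagon identities (\ref{eq1})--(\ref{eq2}) from the two quasi-triangularity axioms, and the braid relation (\ref{eq3}) reduced to the quantum Yang--Baxter identity $R_{12}R_{13}R_{23}=R_{23}R_{13}R_{12}$, which you derive by exactly the same chain $R_{12}(\Delta\otimes\id)R=(\tau_{A,A}\otimes\id)(R_{13}R_{23})R_{12}$ as the paper. The only difference is presentational (element-wise Sweedler expansion versus the paper's operator-level manipulation of flips past the $R_{ij}$), and your bookkeeping checks out: both sides of (\ref{eq3}) are the full reversal precomposed with $R_{23}R_{13}R_{12}$ and $R_{12}R_{13}R_{23}$ respectively.
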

\begin{proof}
Let us first check that $\tau_{R,V,W}$ is an isomorphism of $A$-modules. Since $R$ is invertible, $\tau_{R,V,W}$ is an invertible linear map. Let us check that it is $A$-linear:

\begin{eqnarray*}
\tau_{R,V,W}(a (v\otimes w))&=&\tau_{R,V,W}(\Delta(a)(v\otimes w))= \tau_{V,W} \circ R \circ \Delta(a)(v\otimes w)\\
&=& \tau_{V,W} \circ \tau_{A,A}(\Delta(a)) \circ R (v\otimes w)=\Delta(a) \circ \tau_{V,W} \circ R (v\otimes w)\\
&=&a \circ \tau_{R,V,W}(v\otimes w).
\end{eqnarray*}
Equations (\ref{eq1}) and (\ref{eq2}) are analogous. Let us prove equation (\ref{eq1}). For this we use the fact
that $(\Delta \otimes \id)(R)=R_{13} R_{23}$, and compute:

\begin{eqnarray*}
 ( \tau_{R, V,Z}\otimes \id) \circ ( \id \otimes \tau_{R,W,Z })&= &(\tau_{V,Z} \circ R \otimes \id) \circ (\id \otimes \tau_{W,Z}\circ R )\\
 &=&(\tau_{V,Z}\otimes \id) \circ R_{12} \circ (\id \otimes \tau_{W,Z})\circ R_{23}\\
 &=&(\tau_{V,Z}\otimes \id) \circ (\id \otimes \tau_{W,Z}) (\id \otimes \tau_{A,A})(R_{12})\circ R_{23}\\
  &=&(\tau_{V,Z}\otimes \id) \circ (\id \otimes \tau_{W,Z}) R_{13}\circ R_{23}\\
  &=&(\tau_{V,Z}\otimes \id) \circ (\id \otimes \tau_{W,Z}) (\Delta \otimes \id)R\\
  &=&(\tau_{V\otimes W,Z})\circ (\Delta \otimes \id) R\\
  &=& \tau_{R,V\otimes W,Z}. 
\end{eqnarray*}

Let us now prove equation (\ref{eq3}). We compute:
\begin{eqnarray*}
(\tau_{R,W,Z}\otimes \id)&\circ& (\id \otimes \tau_{R,V,Z} ) \circ (\tau_{R,V,W}\otimes \id)=\\
&=&(\tau_{W,Z}\otimes \id)\circ R_{12} \circ (\id \otimes \tau_{V,Z} )\circ R_{23} \circ (\tau_{V,W}\otimes \id)\circ R_{12}\\
&=&(\tau_{W,Z}\otimes \id)\circ R_{12} \circ (\id \otimes \tau_{V,Z} ) \circ (\tau_{V,W}\otimes \id)\circ R_{13}\circ R_{12}\\
&=&(\tau_{W,Z}\otimes \id) \circ (\id \otimes \tau_{V,Z} )\circ R_{13} \circ (\tau_{V,W}\otimes \id)\circ R_{13}\circ R_{12}\\
&=&(\tau_{W,Z}\otimes \id) \circ (\id \otimes \tau_{V,Z}) \circ (\tau_{V,W}\otimes \id)\circ R_{23} \circ R_{13}\circ R_{12}\\
&=&(\tau_{W,Z}\otimes \id) \circ (\id \otimes \tau_{V,Z}) \circ (\tau_{V,W}\otimes \id)\circ R_{23} \circ R_{13}\circ R_{12}.\\
\end{eqnarray*}

On the other hand:
\begin{eqnarray*}
(\id \otimes \tau_{R,V,W}) &\circ& (\tau_{R,V,Z}\otimes \id)\circ (\id \otimes \tau_{R,W,Z})=\\
&=& (\id \otimes \tau_{V,W})\circ R_{23} \circ (\tau_{V,Z}\otimes \id)\circ R_{12}\circ (\id \otimes \tau_{W,Z})\circ R_{23}\\
&=&(\id \otimes \tau_{V,W})\circ R_{23} \circ (\tau_{V,Z}\otimes \id)\circ (\id \otimes \tau_{W,Z})\circ R_{13}\circ R_{23}\\
&=&(\id \otimes \tau_{V,W}) \circ (\tau_{V,Z}\otimes \id) \circ R_{13}\circ (\id \otimes \tau_{W,Z})\circ R_{13}\circ R_{23}\\
&=&(\id \otimes \tau_{V,W}) \circ (\tau_{V,Z}\otimes \id) \circ (\id \otimes \tau_{W,Z})\circ R_{12}\circ R_{13}\circ R_{23}.\\
\end{eqnarray*}

Clearly,
\[(\tau_{W,Z}\otimes \id) \circ (\id \otimes \tau_{V,Z}) \circ (\tau_{V,W}\otimes \id)=(\id \otimes \tau_{V,W}) \circ (\tau_{V,Z}\otimes \id) \circ (\id \otimes \tau_{W,Z}).\]
Therefore, it suffices to prove that:
\[R_{23}  R_{13}R_{12}=R_{12} R_{13} R_{23}.\]
For this we compute:

\begin{eqnarray*}
R_{12} R_{13} R_{23}&=&R_{12}(\Delta \otimes \id )R\\
&=& (R \otimes 1)( \Delta \otimes \id)R\\
&=&(\tau_{A,A} \otimes \id) (\Delta  \otimes \id) (R) R_{12}\\
&=&(\tau_{A,A}\otimes \id) (R_{13} R_{23}) R_{12}\\
&=&R_{23}R_{13} R_{12}.
\end{eqnarray*}
This completes the proof.
\end{proof}

\begin{corollary}
Let $A$ be a quasi-triangular bialgebra with universal $r$-matrix $R$, and $V$ a module over $A$. Then $\tau_{R,V,V}\in \Aut_A(V)$ is a solution of the Yang-Baxter equation. In particular, the vector space $V^{\otimes n}$ is a representation of the braid group $B_n$.
\end{corollary}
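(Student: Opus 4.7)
The plan is to deduce this corollary directly from the proposition that precedes it, together with the lemma which constructs a representation of $B_n$ from any $r$-matrix. The observation is that equation (\ref{eq3}) in the proposition, for arbitrary modules $V, W, Z$, is already a ``braided'' form of the Yang-Baxter equation; specializing all three modules to the same $V$ collapses it to the ordinary Yang-Baxter equation for a single endomorphism of $V \otimes V$.

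First I would set $R_V := \tau_{R,V,V} \in \Aut(V \otimes V)$ and note that $R_V$ is an isomorphism of $A$-modules by the proposition. Specializing equation (\ref{eq3}) to $V = W = Z$ gives
\[
(R_V \otimes \id) \circ (\id \otimes R_V) \circ (R_V \otimes \id) = (\id \otimes R_V) \circ (R_V \otimes \id) \circ (\id \otimes R_V),
\]
which is exactly the Yang-Baxter equation for $R_V$ as stated in the definition of an $r$-matrix. Hence $R_V$ is an $r$-matrix on $V$.

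The second claim then follows immediately from the earlier lemma which associates, to any $r$-matrix $R_V \in \Aut(V \otimes V)$, the representation $\rho_{R_V}: B_n \rightarrow \Aut(V^{\otimes n})$ characterized by $\sigma_i \mapsto \id^{i-1} \otimes R_V \otimes \id^{n-i-1}$.

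There is essentially no obstacle here since all the work has been done in the proposition; the only thing to check is that the formulation of equation (\ref{eq3}) genuinely matches the Yang-Baxter equation when $V = W = Z$, which it does verbatim. It is worth noting, as a sanity check, that one may alternatively verify the Yang-Baxter identity at the level of the universal $r$-matrix: the computation at the end of the proof of the proposition establishes $R_{12} R_{13} R_{23} = R_{23} R_{13} R_{12}$ in $A^{\otimes 3}$, and applying the action of $A^{\otimes 3}$ on $V^{\otimes 3}$ composed with appropriate permutations of factors recovers the same Yang-Baxter identity for $R_V$.
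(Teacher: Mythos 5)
Your proof is correct and is exactly the intended argument: the paper gives no separate proof of the corollary, leaving it as the immediate specialization $V=W=Z$ of equation (\ref{eq3}) followed by an application of the lemma turning an $r$-matrix into a representation of $B_n$. Nothing further is needed.
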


\subsection{The Drinfeld-Kohno theorem}
We have seen that modules over quasi-triangular bialgebras produce representations of the braid groups. However, so far we have not encountered nontrivial examples of quasi-triangular bialgebras. A rich source of examples comes from Lie theory: 
the universal enveloping algebra $\U$ of a complex semisimple Lie algebra $\g$ can be deformed to obtain a quantum enveloping algebra. These provide interesting examples of quasi-triangular bialgebras and of representations of the braid groups. 

In order to describe the Drinfeld-Jimbo bialgebras, it will be necessary to review some facts regarding the classification of semisimple Lie algebras. A good reference for this subject is Humphreys' book \cite{Hum}. Let $\g$ be a finite dimensional complex semisimple Lie algebra. A Cartan subalgebra $\h \subset \g$ is an abelian subalgebra with the property that $N_\g(\h)=\h$. Where $N_\g(\h)$ is the normalizer of $\h$ in $\g$, i.e:
\[N_\g(\h):= \{ x \in \g: [\h,x]\subset \h \}.\]

Cartan subalgebras always exist and moreover, any two Cartan subalgebras are conjugated to one another. Let us now on fix a Cartan subalgebra $\h \subset \g$. For any element $x \in \h$, the linear map $\ad(x): \g \rightarrow \g$ is diagonalizable. Since $\h$ is abelian, all the maps $\ad(x)$ commute and therefore $\g$ decomposes as a direct sum of eigenspaces:
\[ \g = \bigoplus_{\alpha \in \h^*} \g_\alpha, \]
for some functionals $\alpha \in \h^*$ so that:

\[ [h,y]= \alpha(h)y, \text{ if } y \in \g_\alpha , h \in \h.\]

The set of roots, denoted by $\Delta$, is the set of nonzero $\alpha \in \h^*$ in the decomposition above. 

\begin{lemma}
Let $\g$ be a finite dimensional complex semisimple Lie algebra and $\h \subset \g$ a Cartan subalgebra. For any $\alpha \in \h^*$ set
\[ \g_\alpha:= \{ x \in \g: [h,x]= \alpha (h)x \text{ for all } h \in \h\}.\]
Then:

\[[\g_\alpha,\g_\beta] \subset \g_{\alpha+\beta}.\]
If $x \in \g_\alpha$ for $\alpha \neq 0$, $\ad(x)$ is nilpotent. If $\alpha + \beta \neq 0 $ then $\g_\alpha$ is orthogonal to $\g_\beta$ with respect to the Killing form. In particular, the restriction of $\kappa$ to $\h = \g_0$ is nondegenerate. 
\end{lemma}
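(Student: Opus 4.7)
The plan is to establish the four assertions in sequence, each following from a short computation with the root space decomposition together with the defining property of $\g_\alpha$.

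For the bracket inclusion $[\g_\alpha,\g_\beta]\subset \g_{\alpha+\beta}$, I would apply the Jacobi identity: for $x\in\g_\alpha$, $y\in\g_\beta$ and $h\in\h$,
\[
[h,[x,y]] = [[h,x],y]+[x,[h,y]] = \alpha(h)[x,y]+\beta(h)[x,y]=(\alpha+\beta)(h)[x,y],
\]
so $[x,y]$ lies in the $(\alpha+\beta)$-eigenspace of $\ad(h)$ for every $h\in\h$, hence in $\g_{\alpha+\beta}$.

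Next, for $x\in\g_\alpha$ with $\alpha\neq 0$, the previous inclusion gives $\ad(x)(\g_\beta)\subset \g_{\beta+\alpha}$, and more generally $\ad(x)^k(\g_\beta)\subset \g_{\beta+k\alpha}$. Because $\g$ is finite dimensional, only finitely many functionals $\beta+k\alpha$ can be roots, so for $k$ large enough $\g_{\beta+k\alpha}=0$ uniformly in $\beta$. Since $\g$ is the direct sum of finitely many $\g_\beta$, some sufficiently large power of $\ad(x)$ annihilates every summand, and $\ad(x)$ is nilpotent.

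For orthogonality, take $x\in\g_\alpha$, $y\in\g_\beta$ with $\alpha+\beta\neq 0$. The composition $\ad(x)\circ\ad(y)$ carries $\g_\gamma$ into $\g_{\gamma+\alpha+\beta}$, and since $\gamma\neq \gamma+\alpha+\beta$ for every $\gamma$, the diagonal blocks of $\ad(x)\ad(y)$ in the root space decomposition vanish. Hence $\kappa(x,y)=\mathrm{tr}(\ad(x)\ad(y))=0$. For the final assertion, I would use that the Killing form on $\g$ is nondegenerate by semisimplicity; if $h\in\g_0=\h$ satisfies $\kappa(h,\h)=0$, then by the previous step $h$ is also orthogonal to every $\g_\alpha$ with $\alpha\neq 0$, hence orthogonal to all of $\g$, forcing $h=0$.

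The main subtlety to flag is the identification $\g_0=\h$: the inclusion $\h\subset\g_0$ is immediate from abelianness, while the reverse inclusion uses the self-normalizing hypothesis on $\h$ together with semisimplicity (for instance, that $\ad$ acts diagonalizably on $\h$ and that $\g_0$ is a nilpotent subalgebra containing $\h$ in its normalizer). This is the one point where I would either appeal to a standard reference such as Humphreys \cite{Hum} or insert a separate short argument; the other three assertions are essentially immediate from the Jacobi identity and the finiteness of the root system.
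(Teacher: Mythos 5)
Your proof is correct and follows essentially the same route as the paper's: the Jacobi identity for the bracket inclusion, finiteness of the set of nonzero $\g_\gamma$ for nilpotency of $\ad(x)$, and the vanishing diagonal of $\ad(x)\circ\ad(y)$ in a basis adapted to the decomposition for orthogonality. You in fact go slightly further than the paper, which stops after the orthogonality computation: you spell out how nondegeneracy of $\kappa$ on $\h=\g_0$ follows from semisimplicity together with the orthogonality of distinct root spaces, and you correctly flag that the identification $\g_0=\h$ is the one nontrivial input borrowed from the structure theory.
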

\begin{proof}
The first claim follows from direct computation. Take $h \in \h, x \in \g_\alpha$ and $y \in \g_\beta$:

\[ [h [x,y]]=[[h,x],y] + [x,[h,y]]=(\alpha(x)+ \beta(x)) [x,y].\] 
For the second claim, consider $y \in \g_\beta$, then:
\[ \mathsf{ad}(x)^n(y)\in \g_{n\alpha + \beta} ,\]
since $\g_\gamma$ is nonzero only for finitely many $\gamma$, we conclude that $ \ad(x)^n=0$ for $n$ sufficiently large. Let us now prove the last statement. We choose a basis for $\g$ compatible with the decomposition
$g= \bigoplus_\gamma \g_\gamma$ then if $x \in \g_\alpha$ and $ y \in g_\beta$ then 
\[\ad(x)\circ \ad(y): \g_\gamma \rightarrow \g_{\gamma+ \alpha +\beta},\]
so if $ \alpha +\beta \neq 0$ the matrix associated to $ \ad(x) \circ \ad(y)$ has zeros in the diagonal and therefore
\[\kappa(x,y)=0.\]

\end{proof}

The set $\Delta\subset \h^*$ of roots of $\g$ has a beautiful combinatorial structure which makes it an abstract
root system. Let us recall this definition:

\begin{definition}
Let $E$ be a finite dimensional real vector space with a symmetric positive definite bilinear form $(\,,\,)$.
A root system in $E$ is a finite subset $\Delta \subset E$ which spans $E$ and does not containing zero, such that:
\begin{enumerate}
\item If $\alpha \in \Delta$ then $-\alpha \in \Delta$, and no other multiple of $\alpha$ is in $\Delta$.
\item For each $ \alpha \in \Delta$ the reflection $\sigma_\alpha$ with respect to the plane orthogonal to $\alpha$ fixes the set $\Delta$.
\item If $\alpha, \beta \in \Delta$ then \[\frac{2(\alpha, \beta)}{(\alpha,\alpha)}\in \mathbb{Z}.\]
\end{enumerate}
\end{definition}

Coming back to the roots of the Lie algebra $\g$, since the Killing form $\kappa$ is nondegenerate when restricted to $\h$, it induces a bilinear form in $\h^*$ which we still denote by $\kappa$.

\begin{proposition}
The set $\Delta$ spans a real vector space $E \subset \h^*$ of dimension equal to the complex dimension of $\h^*$. Moreover, the bilinear form $\kappa$ is positive definite when restricted to $E$, and the set $\Delta$ is an abstract root system in $E$.
\end{proposition}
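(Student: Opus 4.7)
The plan is to build the proof in four stages: full complex rank of $\Delta$, construction of an $\slt$-triple for each root, the $\slt$-representation-theoretic input that yields the root system axioms, and finally the real-span and positivity statements. For the first stage, I would show $\Delta$ spans $\h^*$ over $\mathbb{C}$: if not, there is a nonzero $h \in \h$ with $\alpha(h) = 0$ for all $\alpha \in \Delta$; then $[h, x] = \alpha(h) x = 0$ for $x \in \g_\alpha$ and $[h, \h] = 0$, so $h$ is central in $\g$, forcing $\kappa(h, y) = \mathrm{tr}(\ad(h)\circ \ad(y)) = 0$ for every $y$ and contradicting semisimplicity.

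Next I would use the nondegeneracy of $\kappa|_\h$ (from the preceding lemma) to attach to each root $\alpha$ a vector $t_\alpha \in \h$ by $\alpha(k) = \kappa(t_\alpha, k)$. The orthogonality statement in the preceding lemma gives $\g_\alpha \perp \g_\gamma$ for $\gamma \neq -\alpha$, so nondegeneracy of $\kappa$ on $\g$ forces $\g_{-\alpha}\neq 0$ and makes $\kappa$ pair $\g_\alpha$ with $\g_{-\alpha}$ perfectly. A short computation using $\kappa([e,f], k) = \kappa(e, [f,k]) = \alpha(k)\kappa(e,f)$ yields $[e,f] = \kappa(e,f)\,t_\alpha$ for $e \in \g_\alpha$, $f \in \g_{-\alpha}$. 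After checking $\kappa(t_\alpha, t_\alpha) \neq 0$ (a standard solvability argument producing a contradiction with semisimplicity if it vanished), rescaling produces an $\slt$-triple $(e_\alpha, h_\alpha, f_\alpha)$ inside $\g$ with $h_\alpha = 2 t_\alpha/\kappa(t_\alpha, t_\alpha)$.

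The main engine is then $\slt$-representation theory. For any $\beta \in \Delta$ the $\alpha$-string $M_\beta := \bigoplus_{n \in \mathbb{Z}} \g_{\beta + n\alpha}$ is a finite-dimensional $\slt$-module, so $h_\alpha$ acts with integer eigenvalues; in particular $\beta(h_\alpha) = \tfrac{2\kappa(\alpha, \beta)}{\kappa(\alpha, \alpha)} \in \mathbb{Z}$, giving axiom~(3). A parallel analysis of $\mathbb{C} t_\alpha \oplus \bigoplus_{k \neq 0} \g_{k\alpha}$, together with the fact that the weight-$0$ subspace is one-dimensional, proves $\dim \g_\alpha = 1$ and excludes scalar multiples of $\alpha$ in $\Delta$ other than $\pm \alpha$, giving axiom~(1). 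Finally, $\sigma_\alpha(\beta) = \beta - \beta(h_\alpha)\alpha$ sends the highest weight of $M_\beta$ to the lowest weight and vice versa, so $\sigma_\alpha$ permutes $\Delta$, giving axiom~(2).

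For the remaining claims I would choose a $\mathbb{C}$-basis $\alpha_1, \ldots, \alpha_\ell$ of $\h^*$ from $\Delta$; the integrality relations $\tfrac{2\kappa(\alpha_i, \beta)}{\kappa(\alpha_i, \alpha_i)} \in \mathbb{Z}$ combined with invertibility of the Gram matrix $\bigl(\kappa(\alpha_i, \alpha_j)\bigr)$ force every root to be a $\mathbb{Q}$-linear combination of the $\alpha_i$, so $E := \mathbb{R}\text{-span}(\Delta)$ has real dimension $\ell = \dim_{\mathbb{C}} \h^*$. Positive-definiteness of $\kappa$ on $E$ comes from expanding $\kappa(t_\lambda, t_\lambda) = \sum_{\gamma \in \Delta} \gamma(t_\lambda)^2$ via the root space decomposition and observing that for $\lambda \in E$ this is a sum of rational squares, vanishing only when $\gamma(t_\lambda) = 0$ for all $\gamma$, which by the first step forces $\lambda = 0$. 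The main obstacle I foresee is the $\slt$-bootstrap needed to simultaneously establish $\dim \g_\alpha = 1$ and rule out nontrivial integer multiples of $\alpha$ in $\Delta$; once that one-dimensionality is in hand, everything else is essentially combinatorial.
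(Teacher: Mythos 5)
Your proposal is correct and follows the standard argument from Humphreys' book, which is exactly the reference the paper defers to for this proposition (the paper itself supplies no proof beyond the citation). The only point your sketch glosses over is that before reading $\kappa(\lambda,\lambda)=\sum_{\gamma\in\Delta}\gamma(t_\lambda)^2$ as a sum of rational squares one must first establish $\kappa(\alpha,\alpha)\in\mathbb{Q}$ for each root $\alpha$ (by applying the same trace formula with $\lambda=\alpha$ and using the integrality of the Cartan numbers); this is a routine bootstrap that does not affect the soundness of your outline.
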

\begin{proof}
The interested reader may find the proof of this proposition in Humphreys' book \cite{Hum}.
\end{proof}

The complete structure of an abstract root system $\Delta \subset E$ can be described by its Cartan matrix, which is
defined as follows. Given $ v \in E$ denote by $T_v$ the space of vectors orthogonal to $v$. We say that $v \in E $ is singular if $v \in T_\alpha$ for some $\alpha \in \Delta$. We say that $v \in E$ is regular if it is not singular.
Clearly, regular vectors exist since $E$ is not the union of finitely many hyperplanes. Given a regular element $v \in E$,
the set $\Delta$ decomposes into positive and negative roots :
\[ \Delta = \Delta^{+}  \coprod \Delta^{-},\]
where 

\[ \Delta^+:= \{ \alpha \in \Delta: (v,\alpha)>0\} \text{ and } \Delta^-:= \{ \alpha \in \Delta: (v,\alpha)<0\}.\]

 We say that a positive root $\alpha$ is simple if it cannot be written in the form  $\alpha = \beta_1 + \beta_2$ where
 $\beta_1, \beta_2$ are positive roots. 
 
 \begin{definition}
 Let $\Delta$ be an abstract root system in $E$ and $v \in E$ a regular element. Fix an ordering $\alpha_1, \dots , \alpha_l$ of the simple roots. The Cartan matrix of $\Delta$ with respect to $v$ is the matrix:
 \[ C_{i,j}:= \frac{2 (\alpha_i, \alpha_j)}{(\alpha_i, \alpha_i)}.\]
 \end{definition}
 
 \begin{proposition}
 The Cartan matrix $C$ of the root system $\Delta$ is well defined, up to conjugation by a permutation matrix.
 Moreover, $C$ has the following properties:
 \begin{enumerate}
 \item The entries $C_{ij}$ are non-positive integers if $i\neq j$  and $C_{ii}=2$.
 \item There exists a unique diagonal matrix  $D=\mathsf{Diag}(d_1, \dots, d_l )$ with $d_i \in \{ 1,2,3\}$, such that $DC$ is symmetric and positive definite.
 \end{enumerate}
 \end{proposition}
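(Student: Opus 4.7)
The plan is to dissect the proposition into three tasks --- well-definedness of $C$, the numerical properties in (1), and the construction and uniqueness of the symmetrizer $D$ in (2) --- and to handle them in that order using only the combinatorics of $\Delta$ and the root-system axioms.

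First I would address well-definedness. The matrix $C$ depends on a choice of regular element $v \in E$ and an ordering of the resulting simple roots. Reordering the simple roots conjugates $C$ by the corresponding permutation matrix, so the key point is to show independence from $v$ up to permutation. I would introduce the Weyl chambers (the connected components of the set of regular vectors), observe that two regular elements in the same chamber produce the same set of positive roots and therefore the same set of simple roots, and then invoke the fact that the group generated by the reflections $\sigma_\alpha$ acts transitively on chambers. Since these reflections preserve $(\,,\,)$ and permute $\Delta$, two different chambers lead to simple systems that are in bijection compatibly with the form, and hence to Cartan matrices that are conjugate by a permutation matrix.

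Next I would prove (1). The identities $C_{ii} = 2$ are immediate from the definition, and integrality of the off-diagonal entries is exactly axiom (3) of a root system. For non-positivity I would argue by contradiction: if $(\alpha_i, \alpha_j) > 0$ for distinct simple roots $\alpha_i, \alpha_j$, then applying the reflection $\sigma_{\alpha_i}$ to $\alpha_j$ and using axiom (2) exhibits $\alpha_j - \alpha_i$ or $\alpha_i - \alpha_j$ as an element of $\Delta$, contradicting the simplicity of one of $\alpha_i$, $\alpha_j$.

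For (2), the condition that $DC$ be symmetric, $d_i C_{ij} = d_j C_{ji}$, unwinds to $d_i/(\alpha_i,\alpha_i) = d_j/(\alpha_j,\alpha_j)$ whenever $\alpha_i$ and $\alpha_j$ are non-orthogonal. I would therefore set $d_i$ proportional to $(\alpha_i,\alpha_i)$ on each connected component of the Dynkin diagram; then $DC$ becomes a positive scalar multiple of the Gram matrix $(\alpha_i,\alpha_j)$ on each block, which is symmetric and positive definite because the simple roots are linearly independent in the Euclidean space $E$. I expect the main obstacle to be locating the normalization that forces $d_i \in \{1,2,3\}$ and establishing uniqueness. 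This requires the structural fact that within each irreducible component of $\Delta$ at most two distinct root lengths appear, with squared ratio in $\{2,3\}$; I would derive this by noting that for any two non-orthogonal simple roots the product $C_{ij} C_{ji} = 4 \cos^2 \theta_{ij}$ is a non-negative integer strictly less than $4$ by axiom (3), which together with the non-positivity from (1) pins down the possible length ratios. Normalizing the shortest root in each component to have squared length $1$ then produces a unique $D$ with entries in $\{1,2,3\}$.
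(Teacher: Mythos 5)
The paper does not actually prove this proposition --- it defers entirely to Humphreys' book --- so there is no in-paper argument to compare against; your sketch reconstructs the standard textbook proof. Most of it is sound: Weyl chambers and transitivity of the reflection group give well-definedness up to permutation; $C_{ii}=2$ and integrality are immediate from the definition and axiom (3); the lemma that $(\alpha_i,\alpha_j)>0$ forces $\alpha_i-\alpha_j\in\Delta$ (note you must apply whichever of $\sigma_{\alpha_i}$, $\sigma_{\alpha_j}$ has Cartan integer $1$, not necessarily the one you named) yields non-positivity; and taking $d_i$ proportional to $(\alpha_i,\alpha_i)$ turns $DC$ into a positive multiple of the Gram matrix of the simple roots, hence symmetric and positive definite. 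Two points need more care. First, your derivation of ``at most two root lengths per irreducible component, with squared ratio in $\{2,3\}$'' from the pairwise bound $C_{ij}C_{ji}=4\cos^2\theta_{ij}<4$ is insufficient on its own: that bound only constrains non-orthogonal pairs, and a priori a chain $\alpha_1$--$\alpha_2$--$\alpha_3$ in the Dynkin diagram with successive squared-length ratios $2$ and $2$ would force $d_3=4$. Excluding this requires the genuine structural fact that an irreducible root system has at most two root lengths (e.g., because the Weyl orbit of any root meets every other root non-orthogonally somewhere, so any two squared lengths have ratio in $\{1,2,3,\tfrac12,\tfrac13\}$, and a third length would produce a forbidden ratio of $4$, $6$, or $9$). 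Second, uniqueness of $D$ with $d_i\in\{1,2,3\}$ fails as literally stated in the simply laced case: for type $A_l$ the matrix $C$ is already symmetric, so $D=I$, $D=2I$, and $D=3I$ all satisfy the requirements. A normalization such as $\min_i d_i=1$ on each component is needed; this is arguably a defect of the proposition's statement, but your closing sentence asserts uniqueness without confronting it.
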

 \begin{proof}
 The interested reader can find the proof in Humphreys' book \cite{Hum}.
 \end{proof}

We are ready to define the quantum enveloping algebra of a complex semisimple Lie algebra.
These algebras are topological in the sense that all tensor products appearing in the definition are topological tensor products, which we now define. We denote by $\mathbb{C}[[h]]$ the algebra of formal power series in the variable $h$ with complex coefficients.
A module $M$ over $\mathbb{C}[[h]]$ has the structure of a topological vector space: a basis of neighborhoods for zero 
is $\{h^n M\}_{n \geq 0}$. 
\begin{definition}
Let $M$ and $N$ be $\mathbb{C}[[h]]$-modules. The topological tensor product of $M$ and $N$, denoted $M \hat{\otimes} N$ is the $\mathbb{C}[[h]]$-module:
\[ M \hat{\otimes} N:=\varprojlim \frac{ M \otimes_{\mathbb{C}[[h]]} N}{h^n (M \otimes_{\mathbb{C}[[h]]} N)}.\]
There is a natural map $M \otimes_{\mathbb{C}[[h]]}N \rightarrow M \hat{\otimes} N$.

\end{definition}

\begin{definition}
A topological quasi-triangular bialgebra is a $\mathbb{C}[[h]]$-module $A$ together with structure maps and universal $r$-matrix as in the definition of quasi-triangular bialgebra, where all tensor products are replaced by topological tensor products and structure maps are required to be continuous.
\end{definition}

The following notation will be useful in describing the Drinfeld-Jimbo construction. Given an invertible element $q$ in an algebra $A$ and a positive integer $n$, we set:

\[ [n]_q:= \frac{q^n-q^{-n}}{q-q^{-1}},\]

\[ [n]_q!:= [n]_q [n-1]_q \dots [1]_q,\]

\[ \binom{n}{k}_q := \frac{[n]_q! }{[n-k]_q![k]_q!}.\]

\begin{definition}
Let $\g$ be a finite dimensional complex semisimple Lie algebra with Cartan matrix $C$ of size $l \times l$. The Drinfeld-Jimbo algebra $\mathsf{U}_h(\g)$ is the quotient of the free algebra over $\mathbb{C}[[h]]$ on the set $\{H_i, E_i, F_i\}_{i \leq l}$ by the closure of the two sided ideal generated by the following relations:
\begin{eqnarray*}
 &\,&[H_i, H_j]=0,\\
&\,& [E_i, F_j]-\delta_{ij}\left( \frac{\sinh (\frac{h d_i H_i}{2})}{\sinh({\frac{hd_i}{2}})} \right)=0,\\
&\,&[H_i, E_j]-C_{ij}E_j=0,\\
&\,&[H_i, F_j]+C_{ij} F_j=0,\\
&\,&\sum_{k=0}^{1-C_{ij}}(-1)^k \binom{1-C_{ij}}{k}_{q_i} {E_i}^k {E_j} E_i^{1-C_{ij}-k}=0, \text{ for } i \neq j,\\
&\,&\sum_{k=0}^{1-C_{ij}}(-1)^k \binom{1-C_{ij}}{k}_{q_i} {F_i}^k {F_j} F_i^{1-C_{ij}-k}=0, \text{ for } i \neq j.\\
\end{eqnarray*}
Here $q_i:= \exp{(\frac{hd_i}{2})}$.
\end{definition}

\begin{theorem}[Drinfeld \cite{Drinfeld2}-Jimbo \cite{Jimbo}]
The algebra $\mathsf{U}_h(\g)$ is a topological bialgebra with structure maps characterized by the following properties:
\begin{eqnarray*}
&\,& \Delta_h(H_i)=H_i \otimes 1 + 1 \otimes H_i,\\
&\,& \Delta_h(E_i)=E_i \otimes \exp(\frac{hd_i H_i}{4})+ \exp(\frac{-hd_i H_i}{4})\otimes E_i,\\
&\,& \Delta_h(F_i)=E_i \otimes \exp(\frac{hd_i H_i}{4})+ \exp(\frac{-hd_i H_i}{4})\otimes F_i,\\
&\,& \epsilon_h (H_i)=\epsilon_h(E_i)=\epsilon_h(F_i)=0.
\end{eqnarray*}
There exists a universal $r$-matrix $R_h$, which can be written explicitly, making  $\mathsf{U}_h(\g)$ a topological quasi-triangular bialgebra. Moreover, there is a canonical isomorphism of algebras:

\[ \frac{\mathsf{U}_h(\g)}{ h (\mathsf{U}_h(\g))}\cong \U.\]
\end{theorem}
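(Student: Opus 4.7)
The statement has three independent parts: (i) that $\Delta_h, \epsilon_h$ descend from the free topological algebra to well-defined continuous bialgebra structure maps on $\mathsf{U}_h(\g)$; (ii) the existence of the universal $r$-matrix $R_h$ making it quasi-triangular; (iii) the classical limit $\mathsf{U}_h(\g)/h\mathsf{U}_h(\g) \cong \U$. I would address them in this order, as the $r$-matrix construction presupposes the coproduct, and the classical limit computation does not depend on the $r$-matrix.

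For the bialgebra structure, I would first define $\Delta_h$ on the free unital topological $\mathbb{C}[[h]]$-algebra generated by $\{H_i, E_i, F_i\}$ as the unique continuous algebra homomorphism extending the given formulas, and similarly for $\epsilon_h$. To descend to $\mathsf{U}_h(\g)$ it suffices to check that every defining relation maps to zero. The relations $[H_i,H_j]=0$ and $[H_i,E_j]-C_{ij}E_j=0$ are straightforward since $H_i$ is primitive. The sinh relation uses that $\exp(\pm hd_iH_i/4)$ commute with $\Delta_h(H_i)=H_i\otimes 1+1\otimes H_i$ on the appropriate factors. The quantum Serre relations reduce to standard $q_i$-binomial identities. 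Coassociativity of $\Delta_h$ and the counit axioms are then verified on generators; both sides being algebra maps, this suffices on the whole algebra.

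For the classical limit, the Taylor expansion gives $\sinh(hd_iH_i/2)/\sinh(hd_i/2) \equiv H_i \pmod{h}$, and $\binom{1-C_{ij}}{k}_{q_i} \equiv \binom{1-C_{ij}}{k} \pmod h$, so the defining relations of $\mathsf{U}_h(\g)$ reduce modulo $h$ to the Chevalley--Serre relations presenting $\U$. Serre's theorem therefore provides a surjection $\U \twoheadrightarrow \mathsf{U}_h(\g)/h\mathsf{U}_h(\g)$, and injectivity would follow from a PBW-type theorem for $\mathsf{U}_h(\g)$: construct root vectors $E_\alpha, F_\alpha$ for $\alpha \in \Delta^+$ via Lusztig's braid group action on $\mathsf{U}_h(\g)$, and check that ordered monomials in $\{F_\alpha\}\cdot \{H_i\}\cdot \{E_\alpha\}$ form a topological $\mathbb{C}[[h]]$-basis that deforms the classical PBW basis of $\U$.

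For the $r$-matrix, I would use Drinfeld's quantum double construction. Let $\mathsf{U}_h^{\geq 0}$ and $\mathsf{U}_h^{\leq 0}$ denote the subalgebras generated by $\{H_i, E_i\}$ and $\{H_i, F_i\}$ respectively. Construct a non-degenerate Hopf pairing between them extending the pairing on Cartan parts determined by $DC$; the quantum double of $\mathsf{U}_h^{\geq 0}$ and $(\mathsf{U}_h^{\leq 0})^{\mathrm{cop}}$ is a quasi-triangular Hopf algebra whose $r$-matrix is canonically expressed in terms of dual PBW bases. After identifying the two Cartan copies one obtains $\mathsf{U}_h(\g)$ together with the explicit formula
\[ R_h \;=\; \exp\!\left(\tfrac{h}{2}\sum_i d_i\, H_i\otimes H_i\right)\cdot \prod_{\alpha\in\Delta^+} \exp_{q_\alpha}\!\bigl((q_\alpha-q_\alpha^{-1})\,E_\alpha\otimes F_\alpha\bigr), \]
the product taken in a fixed convex order on $\Delta^+$.

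The hard part is clearly step (iii) of this plan. It hinges on two technical inputs: the PBW theorem for $\mathsf{U}_h(\g)$ (requiring the braid group action of Lusztig, which is itself a nontrivial theorem), and the verification of the quasi-triangularity identities $(\Delta_h\otimes\id)R_h = R_{13}R_{23}$ and $(\id\otimes\Delta_h)R_h = R_{13}R_{12}$, which reduce to Hopf pairing axioms but demand lengthy computations with $q$-exponentials. In practice I would not redo this from scratch but follow the route in Kassel's book \cite{Kassel}, which organizes these verifications through the double construction and collects the necessary $q$-identities separately.
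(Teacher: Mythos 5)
The paper does not actually prove this theorem: its ``proof'' is a one-line pointer to Drinfeld's ICM address, and the neighbouring results are likewise deferred to Kassel's book. So there is nothing in the text to compare your argument against step by step; what you have written is an outline of the standard proof (essentially the route of Kassel and of Chari--Pressley), and as an outline it is organized correctly. Your three-part decomposition is the right one, the observation that one only needs to check that the defining relations lie in the kernel of $\Delta_h$ and $\epsilon_h$ (reducing the quantum Serre relations to $q$-binomial identities) is correct, and you correctly isolate the genuinely hard point: the surjection $\U \twoheadrightarrow \mathsf{U}_h(\g)/h\mathsf{U}_h(\g)$ coming from Serre's presentation is easy, while injectivity requires a PBW-type theorem showing the quotient has not collapsed (equivalently, that $\mathsf{U}_h(\g)$ is topologically free over $\mathbb{C}[[h]]$). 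Identifying that as the crux, rather than the formal coalgebra verifications, is the main content of a proof sketch at this level, and you got it right.

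Two caveats. First, your explicit formula for $R_h$ is wrong in general: the Cartan factor is not the diagonal expression $\exp\bigl(\tfrac{h}{2}\sum_i d_i\, H_i\otimes H_i\bigr)$ but $\exp\bigl(\tfrac{h}{2}\, t_0\bigr)$, where $t_0\in\h\otimes\h$ is the canonical element of the restriction of the invariant form to the Cartan subalgebra; in the basis of the $H_i$ this involves the \emph{inverse} of the symmetrized Cartan matrix $DC$, since the simple coroots are not orthogonal. Your formula is correct only in rank one. This does not damage the proof strategy (the theorem only asserts existence of an explicit $R_h$, and the double construction produces the correct one), but as stated the formula would fail the quasi-triangularity identities for, say, $\mathfrak{sl}_3$. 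Second, the heavy inputs --- Lusztig's braid group action and the PBW basis, the nondegeneracy of the Hopf pairing, and the $q$-exponential computations verifying $(\Delta_h\otimes\id)R_h=R_{13}R_{23}$ --- are all deferred to the literature, so the proposal is a roadmap rather than a proof. Given that the paper itself defers the entire theorem, this is an acceptable, indeed more informative, level of detail.
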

\begin{proof}
The proof of this theorem can be found in \cite{Drinfeld2}.
\end{proof}

Given a finite dimensional representation $V$ of the complex semisimple Lie algebra $\g$, there exists a corresponding representation of the Drinfeld-Jimbo algebra $\mathsf{U}_h(\g)$. This fact can be deduced from the following result:

\begin{theorem}
Let $\g$ be a finite dimensional complex semisimple Lie algebra with Drinfeld-Jimbo algebra $\mathsf{U}_q(\g)$.
There exists an isomorphism of topological algebras:
\[ \varphi: \mathsf{U}_h(\g) \rightarrow \U[[h]],\]
which is the identity modulo $h$. Moreover, given any other such isomorphism $\varphi'$ there exists an invertible element $F \in \U[[h]]$,
congruent to $1$ modulo $h$, such that for any $x \in \mathsf{U}_h(\g)$:
\[ \varphi'(x)= F \varphi(x) F^{-1}.\]
\end{theorem}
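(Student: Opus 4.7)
The plan is to treat this as a question in deformation theory of associative algebras. Forgetting the coproduct, $\mathsf{U}_h(\g)$ is a topological associative $\mathbb{C}[[h]]$-algebra that reduces modulo $h$ to $\U$, and the assertion is that this deformation is trivial, with the ambiguity given precisely by the automorphisms of $\U[[h]]$ which reduce to the identity mod $h$, namely the inner conjugations by invertible $F \equiv 1 \pmod{h}$. I would build $\varphi$ and, for uniqueness, the element $F$, inductively order by order in $h$, and translate both obstructions into statements about the Hochschild cohomology of $\U$ with coefficients in $\U^{\mathrm{ad}}$.

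For existence, start with $\varphi_0 = \mathrm{id}$ and suppose by induction that I have constructed a $\mathbb{C}[[h]]$-linear map $\varphi_{<n}: \mathsf{U}_h(\g) \to \U[[h]]$ that is the identity mod $h$ and multiplicative modulo $h^n$. The defect $\varphi_{<n}(ab) - \varphi_{<n}(a)\varphi_{<n}(b)$ lies in $h^n \U[[h]]$ and its leading coefficient defines a Hochschild $2$-cochain $c_n \in C^2(\U,\U)$; associativity of both sides forces $c_n$ to be a $2$-cocycle. If $c_n = \delta f$ for some $1$-cochain $f$, then adjusting $\varphi_{<n}$ by $-h^n f$ kills the defect to the next order, so the inductive step goes through whenever $HH^2(\U,\U) = 0$. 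For uniqueness, given $\varphi, \varphi'$, the composition $\psi = \varphi \circ (\varphi')^{-1}$ is an algebra automorphism of $\U[[h]]$ with $\psi \equiv \mathrm{id} \pmod{h}$; inductively build $F = 1 + hF_1 + h^2 F_2 + \cdots$ so that $\psi = \mathrm{Ad}(F)$, where at each step the leading obstruction is a derivation of $\U$ that one wishes to realise as inner. Thus uniqueness reduces to the statement that every derivation of $\U$ is inner, i.e.\ $HH^1(\U,\U) = 0$.

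The crucial input is the computation of $HH^*(\U,\U)$ for $\g$ finite-dimensional complex semisimple. Using the Koszul (Chevalley--Eilenberg) resolution of $\mathbb{C}$ as a $\U$-module together with the Poincar\'e--Birkhoff--Witt theorem, one obtains a natural identification
\[ HH^n(\U,\U) \cong H^n(\g, \U^{\mathrm{ad}}), \]
where the right-hand side is Lie algebra cohomology with $\U$ viewed as a $\g$-module under the adjoint action. Since $\mathrm{ad}$ acts locally finitely on $\g$, it does so on $\U$ as well, and $\U$ decomposes as an adjoint $\g$-module into a direct sum of finite-dimensional irreducible representations. Whitehead's first and second lemmas then give $H^1(\g,V) = H^2(\g,V) = 0$ for every finite-dimensional representation $V$ of a semisimple $\g$, and summing over the decomposition yields the required vanishing of $H^1(\g,\U)$ and $H^2(\g,\U)$.

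The main obstacle is the topological bookkeeping imposed by the $h$-adic completion: the inductive corrections must respect the $h$-adic topology and assemble into a continuous map, and the Hochschild argument must be carried out for the completed bimodule $\U[[h]]$ rather than $\U$ itself. This is manageable because both $\mathsf{U}_h(\g)$ and $\U[[h]]$ are separated and complete, so the cocycle--coboundary analysis at each order combines into a well-defined map of complete algebras; still, verifying that the Koszul resolution continues to compute the cohomology in the completed setting, and that the adjoint decomposition of $\U$ into finite-dimensional irreducibles transfers appropriately to the $h$-adic completion, requires some care.
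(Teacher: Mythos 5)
The paper does not prove this theorem; it only cites Kassel's book, and the argument given there is exactly the one you outline: rigidity of $\U$ as an associative algebra, proved order by order in $h$ with the obstructions living in Hochschild cohomology, the identification $HH^n(\U,\U)\cong H^n(\g,\U^{\mathrm{ad}})$, local finiteness of the adjoint action on $\U$ (via the PBW filtration, whose graded pieces are finite-dimensional $\g$-submodules), and Whitehead's first and second lemmas. Your treatment of existence ($HH^2=0$ kills the multiplicativity defect) and uniqueness ($HH^1=0$ makes the automorphism $\varphi'\circ\varphi^{-1}$ of $\U[[h]]$ inner, so $\varphi'=F\varphi F^{-1}$) is the standard and correct one, and the direct-sum decomposition passes through the Chevalley--Eilenberg complex because $\Lambda^n\g$ is finite-dimensional.

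There is, however, one genuine missing input that your framing quietly assumes: to run the induction you need $\mathsf{U}_h(\g)$ to be \emph{topologically free} as a $\mathbb{C}[[h]]$-module, i.e.\ isomorphic as a topological module to $\U[[h]]$ (equivalently, $h$-torsion free, separated and complete, with $\mathsf{U}_h(\g)/h\mathsf{U}_h(\g)\cong\U$). Calling $\mathsf{U}_h(\g)$ ``a deformation of $\U$'' presupposes this, but it is not automatic for an algebra presented by generators and relations over $\mathbb{C}[[h]]$: a priori the relations could create $h$-torsion, in which case no isomorphism onto $\U[[h]]$ exists and the theorem would be false as stated. Establishing topological freeness is a Poincar\'e--Birkhoff--Witt-type theorem for $\mathsf{U}_h(\g)$ (done by explicit bases for $\mathfrak{sl}_2$ in Kassel, and by the Drinfeld/Lusztig arguments in general), and it is a substantive step that cannot be absorbed into the Hochschild-cohomological bookkeeping. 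Once that is in place, the rest of your argument, including the $h$-adic continuity remarks, goes through.
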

\begin{proof}
The proof of this theorem can be found in Kassel's book \cite{Kassel}.
\end{proof}

The theorem above implies that given a complex finite dimensional representation $V$ of $\g$, the $\mathbb{C}[[h]]$-module $V[[h]]$ is a representation of $\mathsf{U}_h(\g)$, well defined up to isomorphism. Indeed, $V[[h]]$ is a module over $\U[[h]]$ and we can choose the isomorphism $\varphi$ as above to make it a module over the Drinfeld-Jimbo algebra. Since $\varphi$ is unique up to conjugation, the isomorphism class of the representation is well defined.
Because $\mathsf{U}_h(\g)$ is a topological quasi-triangular bialgebra, the universal $r$-matrix $R_h \in \mathsf{U}_h(\g)\hat{\otimes} \mathsf{U}_h(\g)$ defines an $r$-matrix on $V[[h]]\hat{\otimes} V[[h]]$. We conclude that for each $n\geq 2$
the $\mathbb{C}[[h]]$-module $V[[h]]^{\hat{\otimes}n}\cong V^{\otimes n}[[h]]$ is a representation of the braid group $B_n$ by $\mathbb{C}[[h]]$-linear automorphisms. We denote this representation by 
\[ \rho_{DJ}: B_n \rightarrow \Aut_{\mathbb{C}[[h]]}(V^{\otimes n}[[h]]).\]

For a finite dimensional representation $V$ of a complex semisimple Lie algebra $\g$, there are two different constructions of representations of the braid groups. On the one hand, the monodromy of the Knizhnik-Zamolodchikov connections gives the vector space $V^{\otimes n}$ the structure of a representation of $B_n$. On the other hand, the universal $r$-matrix in the Drinfeld-Jimbo algebra $\mathsf{U}_h(\g)$ provides a representation of $B_n$ on $V^{\otimes n}[[h]]$. The relationship between these two seemingly unrelated constructions is provided by the Drinfeld-Kohno theorem. For a complex number $h$,  let $\theta_{n, h}$ be the Knizhnik-Zamolodchikov connection with parameter $h$ defined by:
\[ \theta_{n,h}:= \frac{h}{2\pi i}\theta_n.\]

Since $d \theta_n=0= [\theta_n, \theta_n]$, the connection $\theta_{n,h}$ is flat. As before, for each $h \in \mathbb{C}$ we obtain a representation of the braid group $\rho_h:B_n \rightarrow V^{\otimes n}$. The monodromy of the connection $\theta_{n,h}$ can be computed explicitly in terms of iterated integrals and in particular, depends on $h$ in an analytic manner. Therefore, by taking Taylor series with respect to the parameter $h$, we obtain a representation:
\[ \rho_{KZ}: B_n \rightarrow \Aut_{\mathbb{C}[[h]]}(V^{\otimes n}[[h]]).\]

\begin{theorem}[Drinfeld \cite{Drinfeld} - Kohno \cite{Kohno3}]
Let $\g$ be a finite dimensional complex semisimple Lie algebra and $V$ a finite dimensional complex representation.
For each $n \geq 2$, the Knizhnik-Zamolodchikov representation of of the braid group $B_n$:

\[ \rho_{KZ}: B_n \rightarrow \Aut_{\mathbb{C}[[h]]} (V^{\otimes n}[[h]]),\]
obtained by taking Taylor series on the monodromy of the Knizhnik-Zamolodchikov connection
is equivalent to the Drinfeld-Jimbo representation:
\[ \rho_{DJ}: B_n \rightarrow \Aut_{\mathbb{C}[[h]]}(V^{\otimes n}[[h]]),\]
obtained via the universal $r$-matrix of the Drinfeld-Jimbo algebra $\mathsf{U}_h(\g)$.
\end{theorem}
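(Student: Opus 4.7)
The plan is to show that both representations $\rho_{KZ}$ and $\rho_{DJ}$ arise as the braiding action on tensor products in equivalent braided monoidal categories built from $\U[[h]]$-modules, and to deduce the equivalence from a rigidity theorem for associators.

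First, I would unpack the Drinfeld-Jimbo side. By the last theorem of the previous subsection, there is an isomorphism of topological algebras $\varphi: \mathsf{U}_h(\g)\to \U[[h]]$ that is the identity modulo $h$. Transporting the coproduct $\Delta_h$ and the universal $r$-matrix $R_h$ along $\varphi$ yields a topological quasi-triangular bialgebra structure on $\U[[h]]$, but the transported coproduct $\tilde{\Delta}=(\varphi\otimes\varphi)\circ\Delta_h\circ\varphi^{-1}$ differs from the standard cocommutative coproduct $\Delta_0$ of $\U[[h]]$. A standard twisting argument produces an invertible $F\in \U^{\otimes 2}[[h]]$ with $F\equiv 1\bmod h$ such that twisting $(\tilde{\Delta},\varphi(R_h))$ by $F$ converts it into a quasi-triangular quasi-bialgebra structure on $\U[[h]]$ with cocommutative coproduct $\Delta_0$, $r$-matrix $R_{DJ}=\exp(h\Omega/2)$, and some associator $\Phi_{DJ}\in \U^{\otimes 3}[[h]]$.

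Second, I would analyze the KZ side. Asymptotic analysis of solutions of the two- and three-point KZ equations near the boundary strata of $\hat{C}_n(\mathbb{C})$ assembles the monodromy of $\theta_{n,h}$ into a \emph{universal} quasi-triangular quasi-bialgebra structure on $\U[[h]]$, with the same $r$-matrix $R_{KZ}=\exp(h\Omega/2)$ (coming from the two-point monodromy) and with Drinfeld's KZ associator $\Phi_{KZ}\in \U^{\otimes 3}[[h]]$ (coming from the transition between two distinguished asymptotic solutions of the three-point KZ equation). The coefficients of $\Phi_{KZ}$ in the free Lie algebra on the infinitesimal generators $\Omega^{12},\Omega^{23}$ are multiple zeta values. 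Decomposing an arbitrary braid into elementary crossings and reassociations shows that $\rho_{KZ}$ coincides with the braid group action produced by the braiding and associativity constraints of this braided monoidal structure, applied to $V^{\otimes n}[[h]]$.

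Third, the two constructions agree modulo gauge equivalence by Drinfeld's rigidity theorem: any two associators $\Phi,\Phi'$ on $\U[[h]]$ that share the $r$-matrix $\exp(h\Omega/2)$, satisfy the pentagon and hexagon axioms, and have the same infinitesimal part at order $h$ are related by a gauge transformation, i.e.\ there exists an invertible $G\in \U^{\otimes 2}[[h]]$, congruent to $1$ modulo $h$ and counit-preserving, such that
\[\Phi' = (\id\otimes\Delta_0)(G)^{-1}\cdot (1\otimes G)^{-1} \cdot \Phi \cdot (G\otimes 1)\cdot (\Delta_0\otimes \id)(G).\]
Applying this to $\Phi_{KZ}$ and $\Phi_{DJ}$, the twist $G$ defines a natural isomorphism of the resulting braided monoidal categories of $\U[[h]]$-modules. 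Restricting this isomorphism to the $n$-fold tensor product $V^{\otimes n}[[h]]$, it intertwines $\rho_{KZ}$ and $\rho_{DJ}$, which is the desired equivalence.

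The main obstacle is the rigidity statement in the third step. Its proof is a deformation-theoretic argument in the Gerstenhaber (Hochschild-type) cohomology controlling quasi-bialgebra structures on $\U[[h]]$. Inductively on the order in $h$, one needs to show that the obstruction to extending a gauge equivalence modulo $h^n$ to one modulo $h^{n+1}$ lies in a cohomology group that vanishes once one restricts to the $\g$-invariant, $\Omega$-symmetric subcomplex relevant to pentagon and hexagon compatibility. Executing this computation, together with the asymptotic analysis of KZ solutions needed to verify that $\Phi_{KZ}$ actually satisfies the pentagon relation, is the technical heart of Drinfeld's proof and goes well beyond the scope of these notes.
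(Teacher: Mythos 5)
The paper offers no proof of this theorem; it simply refers the reader to Kassel's book, and your outline is in essence the strategy of the proof found there (Drinfeld's proof via associators, combined with Kohno's identification of the KZ monodromy with a categorical braiding). So as a roadmap your proposal is the \emph{right} one, and it is more informative than what the paper provides. That said, it is a roadmap rather than a proof: each of the three steps you describe hides a theorem whose proof is comparable in difficulty to the statement itself. Concretely: (i) on the Drinfeld--Jimbo side, the existence of the twist $F$ converting the transported structure into one with cocommutative coproduct $\Delta_0$ and $r$-matrix $e^{h\Omega/2}$ is itself a nontrivial cohomological rigidity statement (vanishing of the relevant $\g$-invariant Hochschild/co-Hochschild cohomology of $\U$); (ii) on the KZ side, the assertion that the monodromy of $\theta_{n,h}$ is computed by the braiding and associativity constraints of $(\U[[h]],\Delta_0,e^{h\Omega/2},\Phi_{KZ})$ requires the full asymptotic-zone analysis of solutions near the boundary of the compactified configuration space, including the proof that $\Phi_{KZ}$ satisfies the pentagon and hexagon identities; (iii) the uniqueness-of-associators step needs the gauge element $G$ to be symmetric and $\g$-invariant, since a general twist changes the $r$-matrix by $R\mapsto G_{21}RG^{-1}$, and without this the resulting tensor equivalence would not be \emph{braided} and would not intertwine the two braid group actions. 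You flag (iii) and part of (ii) explicitly as beyond scope, which is honest, but a reader should be aware that none of the three steps is routine; filling them in is exactly the content of the chapters of Kassel's book that the paper cites.
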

\begin{proof}
The interested reader may refer to Kassel's book \cite{Kassel}.
\end{proof}

\bibliography{RCMBibTeX}
\end{document}